\crefname{equation}{formula}{formulas}
\patchcmd{\thebibliography}{\section*}{\section}{}{}
\newtheorem{thm}{Theorem}[section]
\newtheorem{lma}[thm]{Lemma}
\newtheorem*{lma*}{Lemma}
\newtheorem{crl}[thm]{Corollary}
\newtheorem{prop}[thm]{Proposition}
\newtheoremstyle{break}
  {\topsep}
  {\topsep}
  {\itshape}
  {}
  {\bfseries}
  {.}
  {\newline}
  {}
\theoremstyle{break}
\newtheorem{thmB}[thm]{Theorem}
\newtheorem{lmaB}[thm]{Lemma}
\newtheorem*{lmaB*}{Lemma}
\theoremstyle{definition}
\newtheorem{dfn}[thm]{Definition}
\newtheorem{rmrk}[thm]{Remark}
\newtheorem{qst}[thm]{Question}
\newtheorem{fact}[thm]{Fact}
\newtheoremstyle{breakDefinition}
  {\topsep}
  {\topsep}
  {}
  {}
  {\bfseries}
  {.}
  {\newline}
  {}
\theoremstyle{breakDefinition}
\newtheorem{factB}[thm]{Fact}
\newtheorem{rmrkB}[thm]{Remark}
\newcommand{\proofAVName}{Claim}
\newenvironment{proofAV}[1][\proofAVName]{\begin{proof}[Proof of the #1]}{\end{proof}}
\newcommand*{\card}[1][M]{\vert #1\vert}
\DeclareMathOperator*{\bigdoublewedge}{\bigwedge\mkern-15mu\bigwedge}
\newcommand\existsScaled[1]{\vcenter{\hbox{\scalefont{#1}$\exists$}}}
\mathchardef\mhyphen="2D
\newcommand*{\frmI}[3][\phi]{#1_{#3}(\overline{#2})}
\newcommand*{\frmDblI}[5][\psi]{
	\ifthenelse{\equal{#4}{}}
		{#1_{#5}(\overline{#2},#3)}
		{#1_{#5}(\overline{#2},#3\overline{#4})}
}
\newcommand*{\frmIExTup}[5][\psi]{
	\ifthenelse{\equal{#4}{}}
		{\exists #3#1_{#5}(\overline{#2},#3)}
		{\exists #3\overline{#4}#1_{#5}(\overline{#2},#3\overline{#4})}
}
\newcommand*{\diagI}[3][A]{\diag_{#1_{#3}}(\overline{#2})}
\newcommand{\forkindepPrv}[1]{
  \mathrel{
    \mathop{
      \vcenter{
        \hbox{\oalign{\noalign{\kern-.3ex}\hfil$\vert$\hfil\cr
              \noalign{\kern-.7ex}
              $\smile$\cr\noalign{\kern-.3ex}}}
      }
    }\displaylimits_{#1}
  }
}
\newcommand{\pctext}[2]{\text{\parbox{#1}{\centering #2}}}
\DeclareMathOperator{\theory}{Th}
\DeclareMathOperator{\age}{Age}
\DeclareMathOperator{\cl}{cl}
\DeclareMathOperator{\sem}{sem}
\DeclareMathOperator{\type}{tp}
\newcommand*{\classPosit}[2][C]{\mathcal{#1}_{#2}^{>0}}
\newcommand*{\classPositOpr}[2][C]{(\mathcal{#1}_{#2}^{>0},\leq^{*})}
\newcommand*{\classPositEq}[2][C]{\mathcal{#1}_{#2}^{\geq 0}}
\newcommand*{\classPositEqOpr}[2][C]{(\mathcal{#1}_{#2}^{\geq 0},\leq)}
\newcommand*{\leqst}[3][\leq]{#2#1^{*}#3}
\newcommand{\minPair}[3][\leq]{#2\not{#1}_{min}#3}
\newcommand*{\intCl}[3][\leq^{*}]{#2#1_{i}#3}
\newcommand*{\closure}[3][*]{\cl^{#1}_{#2}(#3)}
\newcommand*{\freeJoin}[3]{#1\sqcup_{#2}#3}
\newcommand*{\notOldCl}[3]{#1\not{\overline{\sqcup}}_{#2}#3}
\newcommand*{\oldClLiteral}[3]{
	\ifthenelse{\equal{#1}{}}
		{\hspace{-3pt}old over $#2$ with respect to \hspace{-2pt}$#3$\hspace{-3pt}}
		{\hspace{-3pt}$#1$ is old over $#2$ with respect to \hspace{-2pt}$#3$\hspace{-3pt}}
}
\newcommand*{\notOldClLiteral}[4]{
	\ifthenelse{\equal{#4}{p}}
		{\ifthenelse{\equal{#1}{}}
			{\hspace{-3pt}new over $#2$ with respect to \hspace{-2pt}$#3$\hspace{-3pt}}
			{\hspace{-3pt}$#1$ is new over $#2$ with respect to \hspace{-2pt}$#3$\hspace{-3pt}}
		}
		{\ifthenelse{\equal{#1}{}}
			{\hspace{-3pt}not old over $#2$ with respect to \hspace{-2pt}$#3$\hspace{-3pt}}
			{\hspace{-3pt}$#1$ is not old over $#2$ with respect to \hspace{-2pt}$#3$\hspace{-3pt}}
		}
}
\newcommand*{\diagCl}[4][A]{\diag_{(#1,#2)}(#3,#4)}
\newcommand*{\diagClIExTupLParenth}[8][A]
	{\ifthenelse{\equal{#8}{B}}
		{\ifthenelse{\equal{#6}{}}
			{\ifthenelse{\equal{#7}{}}
				{\exists #5\Big( \diag_{(#1,#2)}(\overline{#3}#4,#5)\wedge}
				{\exists #5_{#7}\Big( \diag_{(#1,#2_{#7})}(\overline{#3}#4,#5_{#7})\wedge}
			}
			{\ifthenelse{\equal{#7}{}}
				{\exists #5\overline{#6}\Big( \diag_{(#1,#2)}(\overline{#3}#4,#5\overline{#6})\wedge}
				{\exists #5\overline{#6}_{#7}\Big( \diag_{(#1,#2_{#7})}(\overline{#3}#4,#5\overline{#6}_{#7})\wedge}
			}
		}
		{\ifthenelse{\equal{#8}{n}}
			{\ifthenelse{\equal{#6}{}}
				{\ifthenelse{\equal{#7}{}}
					{\exists #5( \diag_{(#1,#2)}(\overline{#3}#4,#5)\wedge}
					{\exists #5_{#7}( \diag_{(#1,#2_{#7})}(\overline{#3}#4,#5_{#7})\wedge}
				}
				{\ifthenelse{\equal{#7}{}}
					{\exists #5\overline{#6}( \diag_{(#1,#2)}(\overline{#3}#4,#5\overline{#6})\wedge}
					{\exists #5\overline{#6}_{#7}( \diag_{(#1,#2_{#7})}(\overline{#3}#4,#5\overline{#6}_{#7})\wedge}
				}
			}
			{\ifthenelse{\equal{#8}{gg}}
				{\ifthenelse{\equal{#6}{}}
					{\ifthenelse{\equal{#7}{}}
						{\exists #5\bigg( \diag_{(#1,#2)}(\overline{#3}#4,#5)\wedge}
						{\exists #5_{#7}\bigg( \diag_{(#1,#2_{#7})}(\overline{#3}#4,#5_{#7})\wedge}
					}
					{\ifthenelse{\equal{#7}{}}
						{\exists #5\overline{#6}\bigg( \diag_{(#1,#2)}(\overline{#3}#4,#5\overline{#6})\wedge}
						{\exists #5\overline{#6}_{#7}\bigg( \diag_{(#1,#2_{#7})}(\overline{#3}#4,#5\overline{#6}_{#7})\wedge}
					}
				}
				{\ifthenelse{\equal{#6}{}}
					{\ifthenelse{\equal{#7}{}}
						{\exists #5 \diag_{(#1,#2)}(\overline{#3}#4,#5)}
						{\exists #5_{#7} \diag_{(#1,#2_{#7})}(\overline{#3}#4,#5_{#7})}
					}
					{\ifthenelse{\equal{#7}{}}
						{\exists #5\overline{#6}\diag_{(#1,#2)}(\overline{#3}#4,#5\overline{#6})}
						{\exists #5\overline{#6}_{#7}\diag_{(#1,#2_{#7})}(\overline{#3}#4,#5\overline{#6}_{#7})}
					}
				}
			}
		}
	}
\newcommand*{\diagClIAllTupLParenth}[8][A]
	{\ifthenelse{\equal{#8}{B}}
		{\ifthenelse{\equal{#5}{}}
			{\ifthenelse{\equal{#7}{}}
				{\forall\overline{#6}\Big( \diag_{(#1,#2)}(\overline{#3}#4,\overline{#6})\rightarrow}
				{\forall\overline{#6}_{#7}\Big( \diag_{(#1,#2_{#7})}(\overline{#3}#4,\overline{#6}_{#7})\rightarrow}
			}
			{\ifthenelse{\equal{#7}{}}
				{\forall\overline{#6}\Big( \diag_{(#1,#2)}(\overline{#3}#4\overline{#5},\overline{#6})\rightarrow}
				{\forall\overline{#6}_{#7}\Big( \diag_{(#1,#2_{#7})}(\overline{#3}#4\overline{#5},\overline{#6}_{#7})\rightarrow}
			}
		}
		{\ifthenelse{\equal{#8}{n}}
			{\ifthenelse{\equal{#5}{}}
				{\ifthenelse{\equal{#7}{}}
					{\forall\overline{#6}(\diag_{(#1,#2)}(\overline{#3}#4,\overline{#6})\rightarrow}
					{\forall\overline{#6}_{#7}(\diag_{(#1,#2_{#7})}(\overline{#3}#4,\overline{#6}_{#7})\rightarrow}
				}
				{\ifthenelse{\equal{#7}{}}
					{\forall\overline{#6}(\diag_{(#1,#2)}(\overline{#3}#4\overline{#5},\overline{#6})\rightarrow}	
					 {\forall\overline{#6}_{#7}(\diag_{(#1,#2_{#7})}(\overline{#3}#4\overline{#5},\overline{#6}_{#7})\rightarrow}
				}
			}
			{\ifthenelse{\equal{#8}{neg}}
				{\ifthenelse{\equal{#5}{}}
					{\ifthenelse{\equal{#7}{}}
						{\forall\overline{#6}\neg \diag_{(#1,#2)}(\overline{#3}#4,\overline{#6})}
						{\forall\overline{#6}_{#7}\neg \diag_{(#1,#2_{#7})}(\overline{#3}#4,\overline{#6}_{#7})}
					}
					{\ifthenelse{\equal{#7}{}}
						{\forall\overline{#6}\neq \diag_{(#1,#2)}(\overline{#3}#4\overline{#5},\overline{#6})}
						{\forall\overline{#6}_{#7}\neq \diag_{(#1,#2_{#7})}(\overline{#3}#4\overline{#5},\overline{#6}_{#7})}
					}
				}
				{\ifthenelse{\equal{#8}{gg}}
					{\ifthenelse{\equal{#5}{}}
						{\ifthenelse{\equal{#7}{}}
							{\forall\overline{#6}\bigg( \diag_{(#1,#2)}(\overline{#3}#4,\overline{#6})\rightarrow}
							{\forall\overline{#6}_{#7}\bigg( \diag_{(#1,#2_{#7})}(\overline{#3}#4,\overline{#6}_{#7})\rightarrow}
						}
						{\ifthenelse{\equal{#7}{}}
							{\forall\overline{#6}\bigg( \diag_{(#1,#2)}(\overline{#3}#4\overline{#5},\overline{#6})\rightarrow}
							{\forall\overline{#6}_{#7}\bigg( \diag_{(#1,#2_{#7})}(\overline{#3}#4\overline{#5},\overline{#6}_{#7})\rightarrow}
						}
					}
					{\ifthenelse{\equal{#5}{}}
						{\ifthenelse{\equal{#7}{}}
							{\forall\overline{#6} \diag_{(#1,#2)}(\overline{#3}#4,\overline{#6})}
							{\forall\overline{#6}_{#7} \diag_{(#1,#2_{#7})}(\overline{#3}#4,\overline{#6}_{#7})}
						}
						{\ifthenelse{\equal{#7}{}}
							{\forall\overline{#6} \diag_{(#1,#2)}(\overline{#3}#4\overline{#5},\overline{#6})}
							{\forall\overline{#6}_{#7} \diag_{(#1,#2_{#7})}(\overline{#3}#4\overline{#5},\overline{#6}_{#7})}
						}
					}	
				}
			}
		}
	}
\newcommand*{\hrchy}[2][s]{
	\ifthenelse{\equal{#1}{s}}
		{\Sigma\overset{^{\raisebox{1pt}{\tiny c}}}{_{\raisebox{-1pt}{\tiny #2}}}}
		{\ifthenelse{\equal{#1}{p}}
			{\Pi\overset{^{\raisebox{1pt}{\tiny c}}}{_{\raisebox{-1pt}{\tiny #2}}}}
			{\ifthenelse{\equal{#1}{h}}
				{\mathbf{H}\overset{^{\raisebox{1pt}{\tiny c}}}{_{\raisebox{-1pt}{\tiny #2}}}}
				{bbb}
			}
		}
}
\newcommand*{\hrchyLong}[2][s]{
	\ifthenelse{\equal{#1}{s}}
		{\Sigma\overset{\mkern -15mu ^{\raisebox{1pt}{\tiny c}}}{_{\raisebox{-1pt}{\tiny #2}}}}
		{\ifthenelse{\equal{#1}{p}}
			{\Pi\overset{\mkern -15mu ^{\raisebox{1pt}{\tiny c}}}{_{\raisebox{-1pt}{\tiny #2}}}}
			{\ifthenelse{\equal{#1}{h}}
				{\mathbf{H}\overset{\mkern -15mu ^{\raisebox{1pt}{\tiny c}}}{_{\raisebox{-1pt}{\tiny #2}}}}
				{bbb}
			}
		}
}
\newcommand*{\hrchySeq}[3][s]{
	\ifthenelse{\equal{#1}{s}}
		{\Sigma\overset{^{\raisebox{1pt}{\tiny $ c_{#3} $}}}{_{\raisebox{-1pt}{\tiny #2}}}}
		{\ifthenelse{\equal{#1}{p}}
			{\Pi\overset{^{\raisebox{1pt}{\tiny $ c_{#3} $}}}{_{\raisebox{-1pt}{\tiny #2}}}}
			{\ifthenelse{\equal{#1}{h}}
				{\mathbf{H}\overset{^{\raisebox{1pt}{\tiny $ c_{#3} $}}}{_{\raisebox{-1pt}{\tiny #2}}}}
				{bbb}
			}
		}
}
\DeclareMathOperator{\diag}{Diag}
\begin{document}

\title{Some Model Theoretic Properties of Non-AC Generic Structures}

\author{Ali Valizadeh\hspace*{20pt} Massoud Pourmahdian
  \thanks{Electronic addresses: \texttt{valizadeh.ali@aut.ac.ir,}\\\hspace*{110pt}  \texttt{pourmahd@ipm.ir}}
}
\affil{	
		}


\date{June 03, 2016}
\maketitle

\begin{abstract}

In the context of Hrushovski constructions we take a language $ \mathcal{L} $ with a ternary relation $ R $ and consider the theory of the generic models $ M^{*}_{\alpha}, $ of the class of finite $ \mathcal{L}$-structures equipped with predimension functions $ \delta_{\alpha}, $ for $ \alpha\in(0,1]\cap\mathbb{Q} $. The theory of generic structures of non-AC smooth classes have been investigated from different points of view, including  decidability and their power in interpreting known structures and theories. For a rational $ \alpha\in(0,1], $ first we prove that the theory of $ M^{*}_{\alpha} $ admits a quantifier elimination down to a meaningful class of formulas, called \textit{closure formulas}; and on the other hand we prove that $ \theory(M^{*}_{\alpha}) $ does not have the finite model property.

\end{abstract}

\section{Introduction and preliminaries}\label{secIntroPrlmn}
Our main contribution in this paper is to further investigate the model theory of non-algebraic generic structures. In a language $ \mathcal{L} $ with a ternary relation, for each $ \alpha\in(0,1] $ one can associate to the class of finite $ \mathcal{L} $-structures some certain function $ \delta_{\alpha}, $ called a \textit{predimension} (\Cref{dfnPredimension}). In the present work we mainly focus on understanding particular \textit{generic} structures (\Cref{dfnGenericModel}), $ M^{*}_{\alpha} $ which were introduced by Hrushovski in \cite{Hrushovski-SimpLascarGroups} and were later studied by various authors in \cite{Pourmahd-SmoothClasses}, \cite{Evans&Wong-SomeRemarksonGen}, and \cite{Brody&Laskowski-OnRationalLimits}. For a rational $ \alpha, $ it has been proved by Evans and Wong that all finite graphs are interpretable in $ M_{\alpha}^{*}, $ and as a consequence, $ \theory(M_{\alpha}^{*}) $ has the strict order property and is undecidable. Later, Brody and Laskowski gave another source of undecidability for $ M_{\alpha}^{*} $ by interpreting Robinson arithmetic in a subtheory of $ \theory(M_{\alpha}^{*}). $ These results suggest that $ \theory(M_{\alpha}^{*}) $ should be assumed to be quite ``wild".

However, the starting point for investigating theory of generic structures is to determine whether the type of tuples can be fully determined in terms of their corresponding closures. This happens in particular within the theory of ``tame" generic structures, where the type of a tuple is given completely by (Boolean combinations of) certain $ \Sigma_{1} $-formulas which indicate the existence of certain diagrams in the closure of that specific tuple.

Our first result suggests that, despite the fact that the theory of $ M_{\alpha}^{*} $ behaves in a wild way, it still shows some levels of tameness by permitting the type of tuples to be fully described by their closures; although this description is obtained through more complex formulas, called \textit{closure formulas}. In addition, this result provides the means to further distinguish the theory of a certain ultraproduct of generic structures, hence strengthening the result by Brody and Laskowski (Theorem 4.9 of \cite{Brody&Laskowski-OnRationalLimits}).

More on the wildness side, we further refine the techniques developed in \cite{Brody&Laskowski-OnRationalLimits} and show that the structure $ \langle\mathbb{Q},<\rangle $ is interpretable in $ M_{\alpha}^{*}. $ Hence we prove that $ \theory(M_{\alpha}^{*}) $ does not have the finite model property. This answers a question posed by Evans and Wong in \cite{Evans&Wong-SomeRemarksonGen}.

\begin{Large}
\end{Large}

\paragraph*{Setting.} Throughout we will be working in a relational language $ \mathcal{L}, $ with only one ternary relation $ R. $ Finite $ \mathcal{L} $-structures are denoted by $ A,B,C,\ldots  $ and by $ M,N,\ldots $ we mean some arbitrary $ \mathcal{L} $-structures. By $ A\subset_{\omega} M $ we mean that $ A $ is a finite substructure of $ M. $ By $ \mathcal{C} $ we denote a class of finite $ \mathcal{L} $-structures, always presumed to contain $ \emptyset. $ For a structure $ M, $ by $ \age(M)$ we denote the class of its finite substructures. For a given $ \mathcal{C}, $ by $ \bar{\mathcal{C}} $ we denote the class of all $ \mathcal{L} $-structures $ M $ with $ \age(M)\subseteq\mathcal{C}. $ Finally for $ A,B\subseteq C, $ the structure induced by $ C $ on $ A\cup B $ is denoted by $ AB. $

We begin by recalling some of the basic definitions in the context of \textit{smooth classes}, and for a thorough investigation of smooth classes we refer the reader to \cite{Wagner-Relational}, \cite{Baldwin&Shi-StableGen}, \cite{Kueker&Laskowski-GenericStructures}, and \cite{Baldwin-FieldGuide}. Unless otherwise stated, by $ (\mathcal{C},\sqsubseteq) $ we mean an arbitrary smooth class and all the structures are assumed to be an element of $ \mathcal{C}. $

\begin{dfn}\label{dfnFreeJoin}
For every $ A\subseteq B,C $ with $ B\cap C = A, $\\
(i) we say that $ B $ is \textit{free} over $ A $ with respect to (or from) $ C, $ if there is no relation with at least one node in $ B\backslash A $ and at least one node in $ C\backslash A. $\\
(ii) The structure $ D $ is called the \textit{free join} of $ B $ and $ C $ over $ A $, denoted by $ \freeJoin{B}{A}{C}, $ if the universe of $ D $ is $ B\cup C $ and we have:
\[ R^{D} = R^{B}\cup R^{C}. \]
\end{dfn}
\begin{dfn}\label{dfnIntExt} Suppose that $ A\subseteq B, $\\
(i) we say that $ B $ is a $ \sqsubseteq $-\textit{intrinsic extension} of $ A, $ if for any $ C $ with $ A\subseteq C\subsetneq B $ we have that $ C\not{\sqsubseteq}B. $ In this situation we write $ \intCl[\sqsubseteq]{A}{B}. $\\
(ii) We say that $ (A,B) $ is a \textit{minimal pair}, in notation $ \minPair[\sqsubseteq]{A}{B,}$ if for any $ C $ with $ A\subseteq C\subsetneq B $ we have $ A\sqsubseteq C $ but $ A\not{\sqsubseteq} B. $
\end{dfn}

\begin{dfn}\label{dfnOldCl}
Suppose that $ \intCl[\sqsubseteq]{AB}{C} $. We say that $ C $ is $ \sqsubseteq $-\textit{old} over $ A $ with respect to $ B $ whenever $ \intCl[\sqsubseteq]{A}{C} $ and $ C $ is free over $ A $ with respect to $ B. $ 
Otherwise we say that $ C $ is $ \sqsubseteq $-\textit{new} over $ A $ with respect to $ B. $
\end{dfn}
It follows from the definition above that whenever $ \intCl[\sqsubseteq]{AB}{C} $ and $ C $ is free over $ A $ with respect to $ B, $ then $ \intCl[\sqsubseteq]{A}{C} $. The following lemma is standard and follows easily from the definitions above:
\begin{lma}\label{lmaIntClUnion} Let $ M\in\bar{\mathcal{C}}. $ The following hold in $ M $:\\
(i) If $ \intCl[\sqsubseteq]{A}{B} $ and $ \intCl[\sqsubseteq]{B}{C} $, then $ \intCl[\sqsubseteq]{A}{C}.$\\
(ii) If $ \intCl[\sqsubseteq]{A}{B_{1}} $ and $ \intCl[\sqsubseteq]{A}{B_{2}} $, then $ \intCl[\sqsubseteq]{A}{B_{1}B_{2}}. $\\
(iii) If $ \intCl[\sqsubseteq]{A}{B_{1}} $ and $ \intCl[\sqsubseteq]{A}{B_{2}} $, then $ \intCl[\sqsubseteq]{B_{1}}{B_{1}B_{2}}. $\\
(iv) If $ \minPair[\sqsubseteq]{A}{B}, $ then $ \intCl[\sqsubseteq]{A}{B}. $\\
(v) If $ \intCl[\sqsubseteq]{A}{B}, $ then there is a finite sequence $ \minPair[\sqsubseteq]{B_{0}}{B_{1}}\minPair[\sqsubseteq]{}{}\cdots\minPair[\sqsubseteq]{}{B_{n}} $ such that $ B_{0}=A $ and $ B_{n}=B. $\qed
\end{lma}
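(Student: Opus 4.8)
The plan is to verify the five items in order, each time unwinding Definition 1.3(i) — that $\intCl[\sqsubseteq]{X}{Y}$ means no $Z$ with $X\subseteq Z\subsetneq Y$ satisfies $Z\sqsubseteq Y$. Throughout, the only external input is that $\sqsubseteq$ is a smooth‑class relation, so it is transitive and satisfies the usual axioms (if $X\sqsubseteq Y\sqsubseteq Z$ then $X\sqsubseteq Z$; if $X\sqsubseteq Z$ and $X\subseteq Y\subseteq Z$ then $Y\sqsubseteq Z$; and $X\sqsubseteq X$). For (i), suppose $\intCl[\sqsubseteq]{A}{B}$ and $\intCl[\sqsubseteq]{B}{C}$ but, toward a contradiction, there is $Z$ with $A\subseteq Z\subsetneq C$ and $Z\sqsubseteq C$. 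Split on whether $B\subseteq Z$: if $B\subseteq Z$ then $Z$ witnesses a failure of $\intCl[\sqsubseteq]{B}{C}$ unless $Z=C$, which is excluded; if $B\not\subseteq Z$, then $B\cap Z$ (or rather the substructure $BZ$ restricted appropriately) — more cleanly, consider $Z\cap B$: one shows $Z\cap B\sqsubseteq B$ using the smoothness axiom applied to $Z\sqsubseteq C$ together with $B\sqsubseteq C$ (which itself follows from $\intCl[\sqsubseteq]{B}{C}$ plus the axiom $X\sqsubseteq X$ degenerating, or directly since intrinsic over $B$ gives $B\sqsubseteq C$ — note the remark before the lemma). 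Then $A\subseteq Z\cap B\subsetneq B$ contradicts $\intCl[\sqsubseteq]{A}{B}$ provided $Z\cap B\neq B$, which holds since $B\not\subseteq Z$. The bookkeeping of these intersections is the one place requiring care, but it is entirely formal.

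For (ii), given $\intCl[\sqsubseteq]{A}{B_1}$ and $\intCl[\sqsubseteq]{A}{B_2}$, suppose $Z$ with $A\subseteq Z\subsetneq B_1B_2$ and $Z\sqsubseteq B_1B_2$. Using the smoothness axiom, $Z\cap B_1\sqsubseteq B_1$ and $Z\cap B_2\sqsubseteq B_2$; since $A\subseteq Z\cap B_i$ and $\intCl[\sqsubseteq]{A}{B_i}$, we must have $Z\cap B_i=B_i$ for $i=1,2$, i.e.\ $B_1\subseteq Z$ and $B_2\subseteq Z$, hence $Z=B_1B_2$, a contradiction. Item (iii) is then immediate from (ii): since $\intCl[\sqsubseteq]{A}{B_1B_2}$ (by (ii)) and $B_1\subseteq B_1B_2$, any $Z$ with $B_1\subseteq Z\subsetneq B_1B_2$ and $Z\sqsubseteq B_1B_2$ would also have $A\subseteq Z$, contradicting $\intCl[\sqsubseteq]{A}{B_1B_2}$; so $\intCl[\sqsubseteq]{B_1}{B_1B_2}$.

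Item (iv) is essentially a matter of comparing definitions: if $\minPair[\sqsubseteq]{A}{B}$ then for every $C$ with $A\subseteq C\subsetneq B$ we have $A\sqsubseteq C$; but also $A\not\sqsubseteq B$, so $C\sqsubseteq B$ would force $A\sqsubseteq B$ by transitivity, a contradiction — hence $C\not\sqsubseteq B$ for all such $C$, which is exactly $\intCl[\sqsubseteq]{A}{B}$. For (v), one builds the chain of minimal pairs by downward recursion on $B$: if $\intCl[\sqsubseteq]{A}{B}$ and $A\neq B$, then among all $C$ with $A\sqsubseteq C$ (equivalently, all $\sqsubseteq$‑closed intermediate sets) and $A\subseteq C\subsetneq B$, pick one, say $B_{n-1}$, maximal with respect to inclusion — the key point being that $A$ itself qualifies (since $A\sqsubseteq A$), so the collection is nonempty, and finiteness of $B$ guarantees a maximal element. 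Then $(B_{n-1},B)$ is a minimal pair: for any $C$ with $B_{n-1}\subseteq C\subsetneq B$, maximality of $B_{n-1}$ forces $B_{n-1}\sqsubseteq C$ (using that any such $\sqsubseteq$‑closed $C$ strictly above $B_{n-1}$ would contradict maximality, combined with an axiom argument), while $B_{n-1}\not\sqsubseteq B$ comes from $\intCl[\sqsubseteq]{A}{B}$. One checks $\intCl[\sqsubseteq]{A}{B_{n-1}}$ persists (it is inherited since $B_{n-1}\subsetneq B$ and any bad witness for $B_{n-1}$ together with (i)-type reasoning would produce one for $B$), and repeat. The recursion terminates because $|B_{n-1}|<|B|$. \emph{The main obstacle} is phrasing (v) cleanly: one must be careful about whether "maximal $\sqsubseteq$‑closed proper subset containing $A$" genuinely yields the minimal‑pair property, since the condition $A\sqsubseteq C$ for all intermediate $C$ in a minimal pair is about $A$‑closedness from \emph{below}, and one needs the smoothness axioms to convert between "no proper $\sqsubseteq$‑closed set strictly between $B_{n-1}$ and $B$" and "every proper intermediate set is $\sqsubseteq$‑closed over $B_{n-1}$." All other items are routine diagram‑chasing with the intersection axiom.
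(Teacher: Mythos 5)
The paper itself offers no proof of this lemma (it is declared standard and left to the reader), so your attempt can only be judged on its own merits. Items (i)--(iv) are essentially correct: the case split on whether $B\subseteq Z$ plus the intersection axiom handles (i), the same axiom gives (ii), (iii) does follow from (ii) exactly as you say, and (iv) is just transitivity. One parenthetical claim in (i) is false, though harmless: $B\sqsubseteq_{i}C$ does \emph{not} yield $B\sqsubseteq C$ --- on the contrary, for a proper intrinsic extension the definition (take the intermediate structure to be $B$ itself) gives $B\not\sqsubseteq C$. Luckily the intersection axiom only requires $B\subseteq C$ as a substructure, so the step survives once that justification is deleted.

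The genuine gap is (v), precisely the point you flagged. Taking $B_{n-1}$ maximal among proper substructures of $B$ containing $A$ with $A\sqsubseteq B_{n-1}$ cannot yield a minimal pair at the top except degenerately: if $B_{n-1}\subsetneq C\subsetneq B$ and $B_{n-1}\sqsubseteq C$, then transitivity with $A\sqsubseteq B_{n-1}$ gives $A\sqsubseteq C$, contradicting maximality. So maximality forces $B_{n-1}\not\sqsubseteq C$ for \emph{every} strictly intermediate $C$ --- the negation of the minimal-pair condition whenever such a $C$ exists, and one always exists when $\vert B\setminus B_{n-1}\vert\geq 2$ because in a relational language every intermediate subset is a substructure (and lies in $\mathcal{C}$ since $\age(M)\subseteq\mathcal{C}$). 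The recursion also cannot continue: to repeat the construction inside $B_{n-1}$ you would need $A\sqsubseteq_{i}B_{n-1}$, which is incompatible with your own choice $A\sqsubseteq B_{n-1}$ unless $B_{n-1}=A$. The correct argument runs bottom-up: since $A\sqsubseteq_{i}B$ and $A\neq B$ give $A\not\sqsubseteq B$, choose $B_{1}$ with $A\subseteq B_{1}\subseteq B$, $A\not\sqsubseteq B_{1}$, and $\vert B_{1}\vert$ minimal; minimality gives $A\sqsubseteq C$ for every $A\subseteq C\subsetneq B_{1}$, so $(A,B_{1})$ is a minimal pair, while $B_{1}\sqsubseteq_{i}B$ is inherited from $A\sqsubseteq_{i}B$ by exactly the observation you used for (iii); now induct on $\vert B\setminus A\vert$.
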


\begin{dfn}\label{dfnGenericModel}
A countable $ \mathcal{L} $-structure $ M $ is called $ (\mathcal{C},\sqsubseteq) $\textit{-generic} if the following conditions hold:\\
(1) $ M\in\bar{\mathcal{C}}, $\\
(2) whenever $ A\sqsubseteq M $ and $ A\sqsubseteq B\in\mathcal{C} $, there is $ B'\subseteq M $ such that $ B'\cong_{A}B $ and $ B'\sqsubseteq M. $\\
(3) $ M $ is the union of a chain $ \{A_{i}:i\in\omega\} $ where $ A_{i}\in\mathcal{C} $ and $ A_{i}\sqsubseteq A_{i+1}, $ for each $ i\in\omega. $
\end{dfn}

\begin{dfn}\label{dfnChi}
Let $ M\in\bar{\mathcal{C}}, $ $ A\subseteq M, $ and $ A\subseteq B\in\mathcal{C}. $ Then:\\
(i) By a \textit{copy} of $ B $ over $ A $ in $ M $, we mean the image of an embedding of $ B $ over $ A $ into $ M. $\\
(ii) By $ \chi_{M}(B/A) $ we mean the number of distinct copies of $ B $ over $ A $ in $ M $ (which can be infinite).
\end{dfn}

\begin{dfn}\label{dfnClosure}
For any $ M\in\bar{\mathcal{C}} $ and $ A\subset_{\omega} M $ we define
\begin{align*}
&\cl_{M}(A):=\bigcup\{B\subset_{\omega} M|\intCl[\sqsubseteq]{A}{B} \},\\
&\cl^{n}_{M}(A):=\bigcup\{B\subset_{\omega} M|\intCl[\sqsubseteq]{A}{B}, \card[B\backslash A]< n \}.
\end{align*}
\end{dfn}
\begin{factB}\label{factClosureProperties}
(i) $ \cl_{M}(A) $ is the unique minimal structure containing $ A $ closed in $ M. $\\
(ii) If $ M $ is $ (\mathcal{C},\sqsubseteq) $-generic, then the closure of each finite substructure of $ M $ is finite.\qed
\end{factB}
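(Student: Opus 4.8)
The plan is to run everything through intrinsic extensions. Recall that a substructure $N\subseteq M$ is \emph{closed in $M$} (i.e.\ $N\sqsubseteq M$) exactly when $B\subseteq N$ for every $A_{0}\subseteq_{\omega}N$ and every $B\subseteq_{\omega}M$ with $\intCl[\sqsubseteq]{A_{0}}{B}$; I will use this combinatorial form of strongness throughout. With it, part~(i) is just bookkeeping with \Cref{lmaIntClUnion}.

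For~(i): the inclusion $A\subseteq\cl_{M}(A)$ is immediate since $\intCl[\sqsubseteq]{A}{A}$ holds vacuously, and minimality is immediate from the reformulation---if $A\subseteq N\sqsubseteq M$, then taking $A_{0}=A$ shows every $B$ occurring in the defining union of $\cl_{M}(A)$ already lies in $N$, so $\cl_{M}(A)\subseteq N$. The only point needing work is that $\cl_{M}(A)$ is itself closed. Here is how I would argue it: let $A_{0}\subseteq_{\omega}\cl_{M}(A)$ and $B'\subseteq_{\omega}M$ with $\intCl[\sqsubseteq]{A_{0}}{B'}$, and aim for $B'\subseteq\cl_{M}(A)$. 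First I would absorb $A_{0}$ into a single intrinsic extension of $A$: choose $B_{1},\dots,B_{k}\subseteq_{\omega}M$ with $\intCl[\sqsubseteq]{A}{B_{i}}$ and $A_{0}\subseteq B_{1}\cdots B_{k}=:B^{*}$, so that $\intCl[\sqsubseteq]{A}{B^{*}}$ by \Cref{lmaIntClUnion}(ii). Since $A\subseteq A_{0}\subseteq B^{*}$, the definition of intrinsic extension (\Cref{dfnIntExt}) upgrades this to $\intCl[\sqsubseteq]{A_{0}}{B^{*}}$. Then \Cref{lmaIntClUnion}(iii), applied with base $A_{0}$ to $\intCl[\sqsubseteq]{A_{0}}{B^{*}}$ and $\intCl[\sqsubseteq]{A_{0}}{B'}$, gives $\intCl[\sqsubseteq]{B^{*}}{B^{*}B'}$, and composing with $\intCl[\sqsubseteq]{A}{B^{*}}$ via \Cref{lmaIntClUnion}(i) gives $\intCl[\sqsubseteq]{A}{B^{*}B'}$. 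Hence $B'\subseteq B^{*}B'\subseteq\cl_{M}(A)$, so $\cl_{M}(A)$ is closed; uniqueness of the minimal closed superset then follows formally.

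For~(ii): I would work with a chain $\{A_{i}:i\in\omega\}$ witnessing genericity (\Cref{dfnGenericModel}(3)). By transitivity of $\sqsubseteq$ one has $A_{i}\sqsubseteq A_{j}$ for all $j\ge i$, and since every finite subset of $M$ lies in some $A_{j}$, the standard argument yields $A_{i}\sqsubseteq M$---i.e.\ $A_{i}$ is closed in $M$---for each $i$. Given $A\subseteq_{\omega}M$, fix $i$ with $A\subseteq A_{i}$; then $A_{i}$ is a closed substructure of $M$ containing $A$, so part~(i) forces $\cl_{M}(A)\subseteq A_{i}$, which is finite because $A_{i}\in\mathcal{C}$.

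The main obstacle---the only step not reducible to manipulating \Cref{lmaIntClUnion} and \Cref{dfnIntExt}---is the ``passage to substructures'' used twice above: that the combinatorial description of ``closed'' agrees with $N\sqsubseteq M$, and that the finite strong pieces $A_{i}$ of a generic structure are strong in $M$. Both come down to the basic fact about smooth classes that $X\sqsubseteq E$ and $Y\subseteq E$ imply $X\cap Y\sqsubseteq Y$ (the fact that makes closures well-defined in the first place); I would invoke it from the standard references on smooth classes, or re-derive it in the case at hand from submodularity of the predimension $\delta_{\alpha}$.
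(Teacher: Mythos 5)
The paper states this Fact with no proof at all (it is quoted as standard and closed with a \qed), so there is no in-paper argument to measure you against; the only question is whether your argument is sound. Your skeleton is the standard one: recast ``closed in $M$'' as ``contains every $\sqsubseteq$-intrinsic extension of each of its finite subsets,'' verify that $\cl_{M}(A)$ has this property using \Cref{lmaIntClUnion}, and for (ii) trap $\cl_{M}(A)$ inside a finite link $A_{i}$ of the chain from \Cref{dfnGenericModel}(3) after checking $A_{i}\sqsubseteq M$. Part (ii), and the deferral of the ``passage to substructures'' to the intersection property of smooth classes ($X\sqsubseteq E$, $Y\subseteq E$ imply $X\cap Y\sqsubseteq Y$), are fine.

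There is, however, one step in (i) that fails as written. In verifying that $\cl_{M}(A)$ is closed you take an \emph{arbitrary} finite $A_{0}\subseteq\cl_{M}(A)$ and then assert ``Since $A\subseteq A_{0}\subseteq B^{*}$\dots''. But $A_{0}$ need not contain $A$ (it may consist entirely of points of $\cl_{M}(A)\setminus A$), and without $A\subseteq A_{0}$ you cannot upgrade $\intCl[\sqsubseteq]{A}{B^{*}}$ to $\intCl[\sqsubseteq]{A_{0}}{B^{*}}$: a subset of an intrinsic extension of $A$ need not have $B^{*}$ intrinsic over it (a single point of $B^{*}\setminus A$ is already a counterexample in general), so \Cref{lmaIntClUnion}(iii) is not applicable with base $A_{0}$. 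What your argument is silently using is a base-monotonicity lemma for intrinsic extensions --- if $\intCl[\sqsubseteq]{X}{Y}$ and $X\subseteq Z$ inside $M$, then $\intCl[\sqsubseteq]{Z}{ZY}$ --- which is not among the items of \Cref{lmaIntClUnion}. With it you could first replace $A_{0}$ by $A_{0}\cup A$ (and $B'$ by $(A_{0}\cup A)B'$) and then your computation goes through verbatim. The lemma is true and short: if $Z\subseteq C\subsetneq ZY$ and $C\sqsubseteq ZY$, the intersection fact gives $C\cap Y\sqsubseteq Y$, while $X\subseteq C\cap Y\subsetneq Y$, contradicting $\intCl[\sqsubseteq]{X}{Y}$; alternatively it follows from submodularity of $\delta_{\alpha}$. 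So the gap is local and repairable from the very fact you invoke at the end, but as written your closedness verification only covers those $A_{0}$ containing $A$, which is not enough for the characterization of closedness you set up.
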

\begin{dfn}\label{dfnAC}
A smooth class $ \mathcal{C} $ is said to have the \textit{algebraic closure property}, in short \textit{AC}, if there is a function $ \mu:\omega\times\omega\rightarrow\omega $ such that for any $ A,B\in\mathcal{C} $ with $ A\sqsubseteq_{i}B $ and any $ M\in\bar{\mathcal{C}} $ we have $ \chi_{M}(B/A)<\mu(\card[A],\card[B]). $
\end{dfn}
If a class $ \mathcal{C} $ has AC, then for any $ M\in\bar{\mathcal{C}},\hspace*{2pt} n\in\omega, $ and $ A\subset_{\omega} M $, $ \cl^{n}_{M}(A) $ is finite.

The notion of genericity for a smooth class $ (\mathcal{C},\sqsubseteq) $ is not necessarily first order expressible, but there exists a first order analogue, which is called \textit{semigenericity} first introduced in \cite{Baldwin&Shelah-Randomness} for smooth classes with AC. It was then generalized to a broader context by the second author in \cite{Pourmahd-SimpleGen} and \cite{Pourmahd-SmoothClasses}.
\begin{dfn}\label{dfnSemigenericModel}
An $ \mathcal{L} $-structure $ M $ is called $ (\mathcal{C},\sqsubseteq) $\textit{-semigeneric} if:\\
(1) $ M\in\bar{\mathcal{C}}, $\\
(2) for any $ A\subset_{\omega} M $, any $ B $ with $ A\sqsubseteq B\in\mathcal{C} $, embedding $ g:A\rightarrow M $, and $ n\in\omega $, there is an embedding $ \bar{g}:B\rightarrow M $ that extends $ g $ and
\[ \cl^{n}_{M}(\bar{g}B) = \freeJoin{\cl^{n}_{M}(gA)}{gA}{\bar{g}B},  \]
\end{dfn}
that is, \oldClLiteral{\cl^{n}_{M}(gA)}{gA}{\bar{g}B}.

The reader is referred to \cite{Baldwin&Shelah-Randomness}, to see that the notion of semigenericity can be expressed by a first order theory.

A vast variety of smooth classes is obtained from the class of finite structures equipped with certain functions called \textit{predimension}s.

\begin{dfn}\label{dfnPredimension}
A function $ \delta:\mathcal{C}\rightarrow \mathbb{R}^{\geq 0} $ is called a \textit{predimension} if for any $ A,B\in\mathcal{C} $ we have\\
(1) $ \delta(\emptyset)=0, $\\
(2) if $ A\cong B $ then $ \delta(A)=\delta(B), $\\
(3) $ \delta(A\cup B)\leq\delta(A)+\delta(B)-\delta(A\cap B). $
\end{dfn}

A given class $ \mathcal{C} $ can be equipped with several predimension functions, but we will be interested in ones which are proved to have interesting model theoretic properties. For a real number $ \alpha\in (0,1] $ define $ \delta_{\alpha}:\mathcal{C}\rightarrow\mathbb{R}^{\geq 0} $ as
\[ \delta_{\alpha}(A)=\card[A]-\alpha\card[R^{A}], \]
where by $ \card[R^{A}] $ we mean the number of relations in $ A $. We assume that $ R $ is symmetric and realized only with distinct triples of elements. Based on these predimensions one can define two \textit{strong} submodel relations on $ \mathcal{C} $ that are denoted by $ \leq_{\alpha} $ and $ \leqst{}{}_{\alpha}. $
\begin{dfn}\label{dfnSmoothClassExamples} For $ A,B\in\mathcal{C} $ we write\\
(i) $ A\leq_{\alpha} B $ if for any $ C\in\mathcal{C} $ with $ A\subseteq C\subseteq B $ we have $ \delta_{\alpha}(A)\leq\delta_{\alpha}(C). $\\
(ii) $ \leqst[\leq_{\alpha}]{A}{B} $ if for any $ C\in\mathcal{C} $ with $ A\subsetneq C\subseteq B $ we have $ \delta_{\alpha}(A)<\delta_{\alpha}(C). $\\
(iii) Also let $ \classPositEq{\alpha}:=\{ A\in\mathcal{C} \hspace*{3pt}|\hspace*{3pt} \emptyset\leq_{\alpha} A\} $ and $ \classPosit{\alpha}:= \{ A\in\mathcal{C} \hspace*{3pt}|\hspace*{3pt} \leqst[\leq_{\alpha}]{\emptyset}{A}\}. $\\
If $ A\leq_{\alpha} B $ (or $ \leqst[\leq_{\alpha}]{A}{B}, $ ) we say that $ A $ is $ \leq_{\alpha} $-strong (or $ \leq^{*}_{\alpha} $-strong) in B.
\end{dfn}

The corresponding intrinsic extensions for the classes $ \classPositEq{\alpha} $ and $ \classPosit{\alpha} $ are denoted respectively by $ \leq_{i,\alpha} $ and $ \leq^{*}_{i,\alpha}. $ In these classes the notion of intrinsic extension can be described concretely as follows.
\begin{factB}\label{factIntrisicForPredim}
(i) $ A\leq_{i_{\alpha}}B $ if and only if for any $ C $ with $ A\subseteq C\subsetneq B $ we have $ \delta_{\alpha}(B)<\delta_{\alpha}(C). $\\
(ii) $ A\leq^{*}_{i_{\alpha}}B $ if and only if for any $ C $ with $ A\subseteq C\subsetneq B $ we have $ \delta_{\alpha}(B)\leq\delta_{\alpha}(C). $\qed
\end{factB}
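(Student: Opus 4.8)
The plan is to unwind Definitions~\ref{dfnIntExt} and~\ref{dfnSmoothClassExamples}, and then for each nontrivial implication to produce an intermediate structure between $A$ and $B$ that is extremal for $\delta_{\alpha}$ (written $\delta$ below). First I would dispose of an apparent ambiguity: since $\emptyset\leq_{\alpha}B$ (respectively $\leqst[\leq_{\alpha}]{\emptyset}{B}$) restricts to every substructure of $B$, any $C$ with $A\subseteq C\subseteq B$ automatically lies in $\classPositEq{\alpha}$ (respectively $\classPosit{\alpha}$), so the clause ``for any $C$'' in the statement may be read as ranging over the ambient smooth class, as in Definition~\ref{dfnIntExt}, with no change of meaning.

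For (i), the right-to-left implication is immediate: if $\delta(B)<\delta(C)$ for every $C$ with $A\subseteq C\subsetneq B$, then for each such $C$ the choice $D:=B$ in Definition~\ref{dfnSmoothClassExamples}(i) witnesses $C\not\leq_{\alpha}B$, i.e.\ $A\leq_{i_{\alpha}}B$. For the converse I would assume $A\leq_{i_{\alpha}}B$ and, towards a contradiction, that $\delta(C_{0})\leq\delta(B)$ for some $C_{0}$ with $A\subseteq C_{0}\subsetneq B$; then among the finitely many $E$ with $A\subseteq E\subseteq B$ I would pick one with $\delta(E)$ minimal, so that $\delta(E)\leq\delta(C_{0})\leq\delta(B)$. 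If $\delta(E)<\delta(B)$, then $E\neq B$ and minimality gives $\delta(E)\leq\delta(D)$ for every $D$ with $E\subseteq D\subseteq B$, i.e.\ $E\leq_{\alpha}B$ with $A\subseteq E\subsetneq B$, contradicting $A\leq_{i_{\alpha}}B$. If instead $\delta(E)=\delta(B)$, then $\delta(C_{0})=\delta(B)$ is minimal too, and the same minimality argument applied to $C_{0}$ gives $C_{0}\leq_{\alpha}B$ with $A\subseteq C_{0}\subsetneq B$, again a contradiction.

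For (ii), right to left is again immediate: if $\delta(B)\leq\delta(C)$ whenever $A\subseteq C\subsetneq B$, then $D:=B$ (note $C\subsetneq B$) witnesses the failure of $\leqst[\leq_{\alpha}]{C}{B}$ via Definition~\ref{dfnSmoothClassExamples}(ii), hence $A\leq^{*}_{i_{\alpha}}B$. For the converse I would assume $A\leq^{*}_{i_{\alpha}}B$ together with $\delta(C_{0})<\delta(B)$ for some $C_{0}$ with $A\subseteq C_{0}\subsetneq B$, and this time pick $E$ with $A\subseteq E\subsetneq B$ having $\delta(E)$ minimal and, among those attaining that minimum, $\lvert E\rvert$ maximal; then $\delta(E)\leq\delta(C_{0})<\delta(B)$. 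For any $D$ with $E\subsetneq D\subseteq B$: if $D=B$ then $\delta(E)<\delta(B)=\delta(D)$; if $D\subsetneq B$ then $A\subseteq E\subsetneq D\subsetneq B$, so minimality gives $\delta(E)\leq\delta(D)$ while $\delta(E)=\delta(D)$ would violate the maximality of $\lvert E\rvert$, hence $\delta(E)<\delta(D)$. Thus $\leqst[\leq_{\alpha}]{E}{B}$ holds with $A\subseteq E\subsetneq B$, contradicting $A\leq^{*}_{i_{\alpha}}B$.

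I do not expect a real obstacle: the statement is essentially bookkeeping around Definitions~\ref{dfnIntExt} and~\ref{dfnSmoothClassExamples}, and finiteness of the structures makes the extremal choices legitimate. The only point calling for a little care is the converse of (ii): because $\leqst[\leq_{\alpha}]{}{}$ carries a \emph{strict} inequality, a bare $\delta$-minimizer need not be $\leqst[\leq_{\alpha}]{}{}$-strong in $B$, which forces the additional requirement that it be of largest cardinality among the minimizers. A downward induction on $\lvert B\backslash A\rvert$ using Lemma~\ref{lmaIntClUnion}(v) would be an alternative route, but the extremal argument is shorter.
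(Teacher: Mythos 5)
Your proof is correct: both directions of (i) and (ii) check out against Definitions~\ref{dfnIntExt} and~\ref{dfnSmoothClassExamples}, and the only delicate point — that a mere $\delta_{\alpha}$-minimizer need not be $\leq^{*}_{\alpha}$-strong in $B$, forcing the cardinality-maximal tie-break in (ii) — is handled properly. The paper states this Fact with the proof omitted as standard (it goes back to the Baldwin--Shi setting), and your extremal-substructure argument is exactly the expected one, so there is nothing to compare beyond noting that your preliminary remark about substructures of $B$ automatically lying in $\mathcal{C}^{\geq 0}_{\alpha}$ (resp. $\mathcal{C}^{>0}_{\alpha}$) correctly disposes of the class-membership issue.
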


Both $ \classPositEqOpr{\alpha} $ and $ \classPositOpr{\alpha} $ are smooth classes with the full amalgamation property possessing the properties necessary for the existence of a generic structure. Hence we reserve $ M_{\alpha} $ and $ M^{*}_{\alpha} $ respectively for the generic structures of these two classes. Also set $ T_{\alpha}=\theory(M_{\alpha}) $ and $ T^{*}_{\alpha}=\theory(M_{\alpha}^{*}).$ Denote by $ T_{\alpha\mhyphen \sem} $ and $ T^{*}_{\alpha\mhyphen \sem} $ respectively the theory of the semigeneric structures of $ \classPositEqOpr{\alpha} $ and $ {\classPositOpr{\alpha}}. $ Notice that since full amalgamation holds for these two classes, their corresponding generics satisfy respectively $ T_{\alpha\mhyphen \sem} $ and $ T^{*}_{\alpha\mhyphen \sem}. $ Also we let $ \alpha\mhyphen \cl(A) $ and $ \alpha\mhyphen \cl^{*}(A) $ denote the corresponding closures.

It is worth noting that if $ \alpha $ is an irrational, then the classes $ \classPositEq{\alpha} $ and $ \classPosit{\alpha} $ are the same and for any finite $ A,B, $ the relation $ A\leq_{\alpha}B $ is equivalent to $ \leqst[\leq_{\alpha}]{A}{B}. $

Moreover by Lemma 3.19 and 3.22 of \cite{Baldwin&Shi-StableGen}, we know that for every $ \alpha\in(0,1], $ the class $ \classPositEqOpr{\alpha} $ has AC; the reason is that whenever $ A\leq_{i,\alpha} B, $ the relative predimension of $ B $ over $ A $ is negative and each additional copy of $ B $ strictly decrease the predimension and hence for any $ M\in\bar{\mathcal{C}}, $ the number $ \chi_{M}(B/A) $ is bounded above. In addition, we refer to the proof of Lemma 3.19 and 3.22 in \cite{Baldwin&Shi-StableGen}, for the reason why this upper bound is uniformly determined by $ \card[A], \card[B] $ and $ \alpha. $

On the other hand the class $ \classPositOpr{\alpha}, $ for rational $ \alpha $, does not have AC, the reason is that for a structure $ A\in\classPosit{\alpha}, $ there are intrinsic extensions $ B $ with $ \delta_{\alpha}(B/A)=0. $ This fact allows the copies of $ B $ over $ A $ to be infinite. But even in this class, $ \chi_{M}(B/A) $ is uniformly bounded for intrinsic extensions with $ \delta_{\alpha}(B/A)<0. $ In such cases being $ \leq^{*}_{i,\alpha} $-intrinsic extension implies $ \leq_{i,\alpha} $-intrinsic extension and we have the following fact, which is the same as Lemma 3.19 in \cite{Baldwin&Shi-StableGen} but it is rephrased in a way which is more suitable for the present context:

\begin{fact}\label{factUpperBound}
For every $ \alpha\in(0,1], $ there is a function $ \mu_{\alpha}:\omega\times\omega\longrightarrow\omega, $ such that for every $ A\leq_{i,\alpha} B\in\classPositEq{\alpha}, $ and every $ M\in\classPositEq[\bar{C}]{\alpha}, $ we have $ \chi_{M}(B/A)<\mu_{\alpha}(\card[A],\card[B]). $ Moreover
\[ \mu_{\alpha}(\card[A],\card[B])\leq \frac{\delta_{\alpha}(A)}{\alpha\card[R^{B/A}]},\]
where by $ R^{B/A} $ we mean the set of relations with at least one node in $ B\backslash A $ and at least one node in $ A. $\qed
\end{fact}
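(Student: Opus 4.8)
The plan is to bound $\chi_M(B/A)$, the number of distinct copies of $B$ over $A$ inside a fixed $M\in\classPositEq[\bar{C}]{\alpha}$, by a counting argument on the predimension of the structure generated by finitely many such copies, exploiting that $\delta_\alpha$ takes values in a discrete set. I would begin with two reductions. If $A=B$ then $\chi_M(B/A)\le 1$ and any value $\ge 2$ serves for $\mu_\alpha$, so assume $A\subsetneq B$. Also $R^{B/A}\ne\emptyset$: otherwise, since $R$ is ternary, every relation of $B$ lies entirely inside $A$ or entirely inside $B\backslash A$, so $R^B=R^A\cup R^{B\backslash A}$ is a disjoint union and hence $\delta_\alpha(B)=\delta_\alpha(A)+\delta_\alpha(B\backslash A)\ge\delta_\alpha(A)$, using $\delta_\alpha(B\backslash A)\ge 0$ since $B\in\classPositEq{\alpha}$, i.e. $\emptyset\le_\alpha B$; but $A\leq_{i,\alpha}B$ together with \Cref{factIntrisicForPredim}(i) (taking $C=A$) gives $\delta_\alpha(B)<\delta_\alpha(A)$, a contradiction, so the fraction in the statement is meaningful. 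Now I fix $M\in\classPositEq[\bar{C}]{\alpha}$ and distinct copies $B_1,\dots,B_k$ of $B$ over $A$ in $M$, and set $Y_i:=AB_1\cdots B_i$, $D:=Y_k$, and $C_i:=Y_{i-1}\cap B_i$, so $A\subseteq C_i\subseteq B_i$. Since $D$ is a finite substructure of $M$ it lies in $\classPositEq{\alpha}$, so $\delta_\alpha(D)\ge\delta_\alpha(\emptyset)=0$.

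Next I would iterate the submodularity of $\delta_\alpha$ (\Cref{dfnPredimension}) along the chain $Y_0\subseteq Y_1\subseteq\cdots\subseteq Y_k$, obtaining
\[ \delta_\alpha(D)\ \le\ \delta_\alpha(A)+\sum_{i=1}^{k}\bigl(\delta_\alpha(B_i)-\delta_\alpha(C_i)\bigr). \]
For each $i$, either $C_i=B_i$, in which case the $i$-th summand is $0$; or $A\subseteq C_i\subsetneq B_i$, and then $A\leq_{i,\alpha}B_i$ (this holds for any copy of $B$ over $A$), so \Cref{factIntrisicForPredim}(i) applied with $C=C_i$ gives $\delta_\alpha(B_i)<\delta_\alpha(C_i)$. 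Writing $\alpha=p/q$ in lowest terms, every value of $\delta_\alpha$ lies in $\tfrac1q\mathbb Z$, so a strictly positive difference of such values is at least $\tfrac1q$; hence each summand of the second kind is $\le-\tfrac1q$. Letting $S$ be the set of indices of the second kind, we get $0\le\delta_\alpha(D)\le\delta_\alpha(A)-\card[S]/q$, so $\card[S]\le q\,\delta_\alpha(A)\le q\,\card[A]$.

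Finally I would observe that a short induction gives $Y_i=A\cup\bigcup_{j\le i,\,j\in S}B_j$ (an index $i\notin S$ has $B_i=C_i\subseteq Y_{i-1}$ and adds nothing), hence $D=A\cup\bigcup_{i\in S}B_i$ and $\card[D\backslash A]\le\card[S]\cdot\card[B\backslash A]\le q\,\card[A]\,(\card[B]-\card[A])$. Since every one of the $k$ copies sits inside $D$ and is given by an injection of $B\backslash A$ into $D\backslash A$, we conclude $k\le\card[D\backslash A]^{\,\card[B\backslash A]}\le\bigl(q\,\card[A]\,(\card[B]-\card[A])\bigr)^{\card[B]-\card[A]}$, so $\mu_\alpha(m,n):=\bigl(q\,m\,(n-m)\bigr)^{\,n-m}+1$ (and $\mu_\alpha(m,m):=2$) is a function with the required property, depending only on $\alpha$ and on $\card[A],\card[B]$. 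To recover the sharper estimate displayed in the statement I would rerun the same bookkeeping while tracking the exact size of each drop: in the extremal configuration in which the copies are pairwise free over $A$ one has $C_i=A$ for every $i$, the $i$-th summand is exactly $\delta_\alpha(B)-\delta_\alpha(A)$, and $\card[S]$ (here equal to $k$) is then bounded by $\delta_\alpha(A)$ divided by this drop.

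I expect the main obstacle to be precisely the phenomenon that two copies may intersect in a substructure lying strictly between $A$ and $B$, on which $\delta_\alpha$ can be \emph{smaller} than $\delta_\alpha(A)$; this is why one cannot simply add the relative predimensions $\delta_\alpha(B_i)-\delta_\alpha(A)$ over the copies, and why the argument must be routed through the union form of submodularity. The other ingredient that does the real work is the discreteness of the value set $\tfrac1q\mathbb Z$: it upgrades the qualitative fact ``each genuinely new copy strictly decreases the predimension'' — forced by $A\leq_{i,\alpha}B$ via \Cref{factIntrisicForPredim}(i) — to the quantitative bound $\card[S]\le q\,\delta_\alpha(A)$, after which the reduction to counting injections of $B\backslash A$ into the now-bounded set $D$ delivers the desired uniformity.
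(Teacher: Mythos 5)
Your core argument is correct, and it is exactly the argument the paper only gestures at (the statement is given with a citation to Lemmas 3.19 and 3.22 of Baldwin--Shi and the one-line reason that each additional copy strictly decreases the predimension): you route the overlap problem through submodularity along the chain $Y_0\subseteq\cdots\subseteq Y_k$, use $A\leq_{i,\alpha}B_i$ via \Cref{factIntrisicForPredim}(i) to force a strict drop whenever a copy adds new points, use discreteness of the value set to make the drop quantitative and uniform, and then count copies combinatorially inside the bounded set $D\backslash A$. This is a sound and self-contained proof of the main assertion, and the bound you extract depends only on $\card[A]$, $\card[B]$ and $\alpha$, as required.

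Two points fall short of the statement as printed. First, you write $\alpha=p/q$ and use that $\delta_\alpha$ takes values in $\tfrac1q\mathbb{Z}$, but the Fact is asserted for every $\alpha\in(0,1]$, including irrational $\alpha$. The repair is easy and you should make it explicit: each drop $\delta_\alpha(C_i)-\delta_\alpha(B_i)$ has the form $\alpha r-s$ with integers $1\leq r\leq\card[R^{B}]$ and $0\leq s\leq\card[B]$, so it ranges over a finite set of positive reals determined by $\alpha$ and $\card[B]$, and hence admits a positive lower bound $\varepsilon(\alpha,\card[B])$ playing the role of $\tfrac1q$. Second, the ``Moreover'' estimate is not established by your closing paragraph: in the pairwise-free configuration the per-copy drop is $-\delta_\alpha(B/A)=\alpha\card[R^{B/A}]-\delta_\alpha(B\backslash A)\leq\alpha\card[R^{B/A}]$ (since $B\in\classPositEq{\alpha}$ gives $\delta_\alpha(B\backslash A)\geq 0$), so dividing $\delta_\alpha(A)$ by that drop yields a bound that is at least $\delta_\alpha(A)/(\alpha\card[R^{B/A}])$, i.e.\ an inequality in the wrong direction relative to the displayed one; what your bookkeeping actually gives is a bound of the shape $\delta_\alpha(A)/(-\delta_\alpha(B/A))$ for families of copies meeting pairwise in $A$. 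The displayed refinement (which the paper itself does not prove but attributes to the proof in Baldwin--Shi, and which as literally stated bounds a function of $\card[A],\card[B]$ by a quantity depending on the actual structures) should therefore either be derived separately under the appropriate extra hypotheses or simply quoted from the source; your proposal as written proves the existence and uniformity of $\mu_\alpha$, not the quoted numerical estimate.
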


%

In order to draw a dividing line in terms of the classes $ \classPositEqOpr{\alpha} $ and $ \classPositOpr{\alpha} $ that are the typical examples of smooth classes with and without AC, we collect some results about these classes.  An important feature here is that the types of tuples in the models of the theory of generic structures are expected to be determined by their closures. Hence one can see that in the presence of AC, that bounds the size of the closures, the smooth class leads to a stable context.
\begin{factB}\label{factStableContext}
\hspace{15pt}(i) For every $ \alpha\in(0,1], $ we have that $ T_{\alpha}=T_{\alpha\mhyphen \sem}. $ Moreover $ T_{\alpha} $ is a near-model-complete but not model-complete theory(\cite{Baldwin&Shelah-Randomness}).\\
\hspace{15pt}(ii) If $ \alpha\in\mathbb{Q}, $ then $ T_{\alpha} $ is $ \omega $-stable(\cite{Baldwin&Shi-StableGen}).\\
\hspace{15pt}(iii) If $ \alpha\not\in\mathbb{Q}, $ then $ T_{\alpha} $ is strictly stable (\cite{Baldwin&Shi-StableGen}). Furthermore $ T_{\alpha} $ is equal to $ T^{\alpha}, $ the almost-sure theory of random graphs with edge probability $ n^{-\alpha}. $ Hence it has the finite model property(\cite{Baldwin&Shelah-Randomness}).\qed
\end{factB}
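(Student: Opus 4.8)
Each clause is classical, so I only describe how I would argue; the common engine is a closure analysis resting on AC. Since $\classPositEqOpr{\alpha}$ has AC by \Cref{factUpperBound}, for any model $M$ of $T_\alpha$ and finite $A\subseteq M$ the closure $\cl_M(A)$ is finite, and with the uniform bound $\mu_\alpha$ in hand the isomorphism type of $(\cl_M(A),A)$ — hence $\type(A)$ — is coded by a Boolean combination of $\Sigma_1$ formulas: those saying that $\cl_M(A)$ contains copies of prescribed intrinsic extensions of $A$, together with a $\Pi_1$ formula saying that the resulting finite candidate for $\cl_M(A)$ admits no proper intrinsic extension of size up to the bound given by $\mu_\alpha$, hence is genuinely closed. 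I would record this first, as everything else rests on it.

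For (i), I would begin by noting that once $\cl^n_M(-)$ is uniformly bounded the conditions of \Cref{dfnSemigenericModel} become first order — the content of \cite{Baldwin&Shelah-Randomness} — so $T_{\alpha\mhyphen \sem}$ is a genuine theory, and since $M_\alpha$ is semigeneric we get $T_\alpha\vdash T_{\alpha\mhyphen \sem}$. I would then prove $T_{\alpha\mhyphen \sem}$ complete by a back-and-forth between two $\aleph_1$-saturated models that extends partial isomorphisms between finite closed substructures, the semigenericity axioms supplying the needed extensions; this gives $T_\alpha=T_{\alpha\mhyphen \sem}$. Near-model-completeness is then immediate from the opening paragraph, while the failure of model-completeness I would take from \cite{Baldwin&Shelah-Randomness}: the freeness clauses in the spirit of \Cref{dfnOldCl} are genuinely $\Pi_1$ with no $\Sigma_1$ equivalent.

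For (ii) and (iii), I would count complete $n$-types over a countable $M\models T_\alpha$: such a type is pinned down by the finite configuration $\cl_M(M\bar a)\setminus M$ together with its finitely many attachment points in $M$, of which there are only countably many, so $T_\alpha$ is stable for every $\alpha$. When $\alpha=p/q$ is rational the values of $\delta_\alpha$ on $\classPositEq{\alpha}$ lie in the well-founded set $\tfrac1q\mathbb{Z}^{\geq 0}$, which forbids infinite forking chains and promotes stability to $\omega$-stability (\cite{Baldwin&Shi-StableGen}). When $\alpha$ is irrational, a sequence of predimensions $\delta_\alpha(A_0)>\delta_\alpha(A_1)>\cdots$ bounded below by $0$ but with no least term realises an infinite forking chain, so $T_\alpha$ is not superstable, i.e. strictly stable (\cite{Baldwin&Shi-StableGen}). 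To get $T_\alpha=T^\alpha$ I would invoke the Shelah--Spencer $0$--$1$ law for $G(n,n^{-\alpha})$ and verify that the extension axioms holding almost surely are precisely those expressing $(\classPositEq{\alpha},\leq_\alpha)$-genericity, so the almost-sure theory is complete and coincides with $T_\alpha$; the finite model property follows at once, since any $\sigma\in T_\alpha=T^\alpha$ has $\Pr[G(n,n^{-\alpha})\models\sigma]\to 1$ and therefore finite models of all large sizes.

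The step I expect to be the main obstacle is the $0$--$1$ law in (iii): showing that a limiting theory exists for the sparse random graph at an irrational exponent, and then matching its almost-sure extension axioms to the combinatorial notion of genericity, requires the delicate probabilistic analysis of \cite{Baldwin&Shelah-Randomness} (after Shelah--Spencer). By contrast, clause (i), the stability dichotomy, and the finite-model-property corollary should follow fairly mechanically from the AC bound of \Cref{factUpperBound} and the closure coding recorded at the outset.
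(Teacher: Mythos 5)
The paper itself offers no proof of this Fact: it is quoted with a \qed and deferred entirely to \cite{Baldwin&Shelah-Randomness} and \cite{Baldwin&Shi-StableGen}, so the only real question is whether your sketch faithfully reflects those sources. At the level of strategy it largely does: for (i), first-order expressibility of semigenericity, $M_{\alpha}\models T_{\alpha\mhyphen \sem}$, and completeness of $T_{\alpha\mhyphen \sem}$ via a back-and-forth between saturated models is the standard route; for (iii), identifying $T_{\alpha}$ with the almost-sure theory of $G(n,n^{-\alpha})$ and reading off the finite model property from the $0$--$1$ law is exactly the Baldwin--Shelah/Shelah--Spencer argument you correctly flag as the hard probabilistic core.

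There is, however, a concrete error in your ``common engine''. AC, i.e.\ \Cref{factUpperBound}, bounds the number of copies of each $\leq_{i,\alpha}$-intrinsic extension and hence makes each \emph{bounded} closure $\cl^{n}_{M}(A)$ finite, uniformly over all models of $T_{\alpha\mhyphen \sem}$; it does not make the full closure $\cl_{M}(A)$ finite in an arbitrary model of $T_{\alpha}$. For rational $\alpha=p/q$ every minimal pair drops the predimension by at least $1/q$, so chains of minimal pairs have bounded length and full closures are indeed finite; but for irrational $\alpha$ the drops accumulate at $0$, and in a saturated model of $T_{\alpha}$ a finite set can have infinite closure. Consequently your opening claim, and the type count in (ii)/(iii) asserting that a type over a model is ``pinned down by the finite configuration $\cl_M(M\bar a)\setminus M$'', fail exactly in the irrational case --- if types were uniformly determined by finite configurations you would be proving $\omega$-stability for all $\alpha$, contradicting clause (iii). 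The correct formulation is the one this paper itself records in Section~\ref{secQE} when motivating closure formulas: for each $\phi$ there is $l_{\phi}$ such that satisfaction of $\phi(\bar a)$ depends only on the isomorphism type of $\cl^{l_{\phi}}_{M}(\bar a)$, which is finite uniformly in models of $T_{\alpha\mhyphen \sem}$; near-model-completeness and completeness rest on that, and stability for irrational $\alpha$ comes from counting countable (not finite) closure data, giving at most $\lambda^{\aleph_0}$ types, with the well-founded versus dense value set of $\delta_{\alpha}$ then separating $\omega$-stable from strictly stable as you describe. With that repair your sketch is consistent with the cited proofs.
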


But for $ \classPositOpr{\alpha}, $ for $ \alpha $ a rational, the situation for the theory of generic changes dramatically:
\begin{factB}\label{factUnstableContext}
 \hspace{25pt}(i) $ T^{*}_{\alpha\mhyphen \sem} $ interprets Robinson arithmetic and is essentially undecidable(\cite{Brody&Laskowski-OnRationalLimits}). \\
\hspace{25pt}(ii) All finite graphs are interpretable in $ M^{*}_{\alpha} $ and hence $ T^{*}_{\alpha} $ is undecidable and has the strict order property(\cite{Evans&Wong-SomeRemarksonGen}).\qed
\end{factB}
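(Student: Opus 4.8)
Both clauses are instances of one technique: plant combinatorially rich configurations inside the generic structure and read them back with a \emph{fixed} tuple of formulas. The engine is the failure of AC recorded before \Cref{factUpperBound}: for rational $\alpha$ there are intrinsic extensions $A\leq^{*}_{i,\alpha}B$ with $\delta_{\alpha}(B/A)=0$, and \Cref{factUpperBound} puts no bound on $\chi_{M}(B/A)$ in that case, so the number of copies of such a $B$ over a base can be made as large as we like by varying the base. The plan is to prove (ii) directly and then adapt the same coding to obtain (i), following \cite{Evans&Wong-SomeRemarksonGen} and \cite{Brody&Laskowski-OnRationalLimits} respectively.

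\emph{Clause (ii).} Given a finite graph $G=(V,E)$, I would build a finite $A_{G}\in\classPosit{\alpha}$ containing distinguished ``vertex elements'' $\{a_{v}:v\in V\}$, each decorated by a fixed ``marker'' configuration that flags it as a vertex, and for each edge $vEw$ a fixed ``gadget'' $H$ glued onto the pair $\{a_{v},a_{w}\}$; markers and gadgets are chosen with nonnegative relative predimension so that, by sub-additivity of $\delta_{\alpha}$ (\Cref{dfnPredimension}(3)), the whole assembly still satisfies $\leqst[\leq_{\alpha}]{\emptyset}{A_{G}}$, i.e.\ $A_{G}\in\classPosit{\alpha}$. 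Since $\leqst[\leq_{\alpha}]{\emptyset}{M^{*}_{\alpha}}$ always holds, genericity (\Cref{dfnGenericModel}(2)) produces a $\leq^{*}_{\alpha}$-strong copy of $A_{G}$ inside $M^{*}_{\alpha}$. The formula carving out the vertex elements, the formula for equality of vertices, and a formula $\varphi(x,y)$ of the shape ``$\exists\bar z$ such that $\{x,y,\bar z\}$ realises the gadget pattern'' do not depend on $G$; since a gadget pattern is an intrinsic extension of $\{x,y\}$, any such $\bar z$ lies in $\alpha\mhyphen\cl^{*}(\{x,y\})$, which is finite by \Cref{factClosureProperties}(ii). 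This yields a \emph{uniform} interpretation of every finite graph in $M^{*}_{\alpha}$. Arbitrarily long finite linear orders are uniformly recoverable from finite graphs, so composing with the interpretation gives a single parametrised formula defining orders of every finite length, whence the strict order property by compactness; and, as in \cite{Evans&Wong-SomeRemarksonGen}, a uniform interpretation of all finite graphs forces $T^{*}_{\alpha}$ to be undecidable.

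\emph{Clause (i).} I would promote the gadget from an edge code to an arithmetic code, working inside a model $N\models T^{*}_{\alpha\mhyphen\sem}$ (for instance $M^{*}_{\alpha}$, which satisfies $T^{*}_{\alpha\mhyphen\sem}$). Take the basic gadget to be a $\delta_{\alpha}=0$ intrinsic extension, and represent a number by an element whose closure contains a ``stack'' of that many copies of the gadget; elements representing every (standard or nonstandard) number are available by semigenericity, which also lets one adjoin over strong substructures the three-element patterns intended to express $x+y=z$ and $x\cdot y=z$, so that the interpreted structure satisfies each of the finitely many axioms of Robinson's $Q$. Since this verification uses only the extension property built into semigenericity, it goes through in every model of $T^{*}_{\alpha\mhyphen\sem}$, and essential undecidability of $Q$ then transfers along the interpretation.

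\emph{Main obstacle.} In both clauses the delicate point is to force the read-back formulas to recover exactly the planted data. A strong copy of $A_{G}$ (or of a numeral configuration) need not be closed in $M^{*}_{\alpha}$: it can sit strongly while the ambient amalgamations still adjoin \emph{further} $\delta_{\alpha}=0$ intrinsic extensions --- accidental markers and gadgets --- to the decorated tuples, potentially creating spurious edges or corrupting a numeral. The remedy, which is the technical heart of the cited papers, is to engineer the vertex/numeral elements so that their closure type precludes any additional gadget being glued on (e.g.\ by saturating the relevant predimension budget in $A_{G}$ itself), and to make the gadgets rigid enough that no automorphism of the closure identifies two decorated elements. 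Exhibiting such rigid, closure-stable gadgets for every rational $\alpha\in(0,1]$ and checking the $Q$-axioms in the interpreted structure is what \cite{Evans&Wong-SomeRemarksonGen} and \cite{Brody&Laskowski-OnRationalLimits} carry out in detail.
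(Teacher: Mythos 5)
The paper offers no proof of this statement: it is a quoted Fact, with the actual arguments living in \cite{Evans&Wong-SomeRemarksonGen} for clause (ii) and \cite{Brody&Laskowski-OnRationalLimits} for clause (i), and the paper itself only records the conclusions. Measured as a proof attempt, your text is an outline rather than an argument: you explicitly defer what you call the technical heart --- constructing rigid, closure-stable gadgets for every rational $\alpha$ and verifying the axioms of $Q$ in the interpreted structure --- to the very papers being cited. That is a fair annotation of the citation, but it leaves nothing independently checkable at precisely the point where such codings can fail, namely forcing the read-back formulas to recover exactly the planted configurations and nothing else.

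Beyond the deliberate deferral, one step of your clause (i) sketch would not go through as written. You code a number by ``an element whose closure contains a stack of that many copies of the gadget'' and assert that the verification uses only semigenericity and hence holds in every model of $T^{*}_{\alpha\mhyphen\sem}$. But the finiteness of $\cl^{*}$ of a finite tuple, and the finiteness of the number of copies of a $\delta_{\alpha}=0$ minimal pair over a base, are features of the generic $M^{*}_{\alpha}$ that fail in arbitrary semigeneric models --- the paper stresses exactly this contrast right after \Cref{thmQE}, and it is why $\classPositOpr{\alpha}$ lacks AC in the first place. An interpretation of Robinson arithmetic in the theory $T^{*}_{\alpha\mhyphen\sem}$ (which is what essential undecidability transfer requires: the theory must prove the translations of the axioms of $Q$) cannot lean on closure-counting inside the generic; this is why Brody--Laskowski work through first-order codings of finite subsets (their Proposition 3.3, quoted here as \Cref{lmaDefinFinRel}) uniformly in all models of $T^{*}_{\alpha\mhyphen\sem}$. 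As written, your argument yields at best that the single structure $M^{*}_{\alpha}$ interprets $Q$, which is weaker than clause (i). Clause (ii) of your sketch has the right shape (uniform interpretation with parameters, then hereditary undecidability of the theory of finite graphs and compactness for the strict order property), but again the decisive combinatorial construction of the gadgets is quoted, not supplied.
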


Also for a smooth class $ \mathcal{C} $ one can associate a theory $ T_{\text{Nat}} $ in an expanded language $ \mathcal{L}_{+}\supseteq\mathcal{L} $ which reduces the notion of strong submodel in $ \mathcal{L} $ to the notion of $ \mathcal{L}_{+} $-substructure. Precise definitions and more investigations can be found in \cite{Pourmahd-SmoothClasses} and \cite{Pourmahd-SimpleGen}.

It is also shown in \cite{Pourmahd-SmoothClasses} and \cite{Pourmahd-SimpleGen} that the class of existentially closed models of $ T^{*}_{\alpha\mhyphen \text{Nat}} $ is rather well-behaved. In particular it is shown that this class is simple in the sense of \cite{Pillay-ForkingExistential}.

\section{Closure formulas}\label{secQE}
Recall that if $ \alpha  $ is irrational, then the classes $ \classPositEqOpr{\alpha} $ and $ \classPositOpr{\alpha} $ are the same, and moreover the theory of their generic structure is near-model-complete. For rational $ \alpha $, near-model-completeness still holds for $ \classPositEqOpr{\alpha} $-generic $ M_{\alpha}. $ In this section we prove that for a rational $ \alpha\in(0,1], $ the theory of the $ \classPositOpr{\alpha} $-generic $ M^{*}_{\alpha}, $ admits quantifier elimination down to a certain class of formulas, called \textit{closure formulas.} Also as an application, we investigate the theory of a certain ultraproduct of generic structures.

For the rest of this section we fix a rational $ \alpha $ and to ease the notation, we drop the subscript $ \alpha $ from $ \leq_{\alpha}, \leq^{*}_{\alpha}, $ and $ \leq^{*}_{i,\alpha}, $ and will respectively write $ \leq, \leq^{*}, $ and $ \leq^{*}_{i} $ instead.

Closure formulas (\ref{dfnClosureFormula}) are roughly speaking $ \mathcal{L} $-formulas $ \phi(\bar{x}) $ in which all the quantifiers range through the intrinsic extensions of $ \bar{x}. $ The closure formulas are inductively built through the following hierarchy. Let:
\begin{align*}
& X:=\Big\{x_{i}=x_{i}, x_{i}\neq x_{i} \Big| i\in\omega\Big\},\\
& S^{c}_{0}:=\Big\{\diagI{x}{}\Big| A\in\classPosit{1}\Big\}\cup X, \\
& P^{c}_{0}:=\Big\{\neg\diagI{x}{}\Big| A\in\classPosit{1}\Big\}\cup X,
\end{align*}
and define $ \hrchy{0} $ and $ \hrchy[p]{0} $ to be the sets which are obtained respectively from $ S^{c}_{0} $ and $ P^{c}_{0} $ by closure under conjunctions and disjunctions.

For $ {\intCl{A}{B}\in\classPosit{},} $ denote by $ \diagCl{B}{\bar{x}}{\bar{y}} $ the formula that expresses the diagram of $ B $ over $ A $. We make the convention that $ \bar{x} $ are variables associated to the elements of $ A $ and $ \bar{y}$ are variables containing $ B\backslash A $ and not intersecting $ \bar{x}. $ We also allow $ \bar{y} $ to contain dummy variables.

For any $ n\geq 1 $ define $ S^{c}_{n} $ to be the following set of formulas:
\begin{align}\label{eqnScn}
\Big\{ \diagClIExTupLParenth{B}{x}{ }{ }{y}{ }{B} \bigwedge_{i=1}^{m}\theta_{i}(\bar{x}\bar{y})\Big)\Big| \intCl{A}{AB}\in\classPosit{1}, \theta_{i}(\bar{x}\bar{y})\in P^{c}_{n-1} \Big\},
\end{align}
and similarly $ P^{c}_{n} $ to be the set:
\begin{align}\label{eqnPcn}
\Big\{\diagClIAllTupLParenth{B}{x}{}{}{y}{}{B}\bigvee_{i=1}^{m}\theta_{i}(\bar{x}\bar{y})\Big)\Big| \intCl{A}{AB}\in\classPosit{1}, \theta_{i}(\bar{x}\bar{y})\in S^{c}_{n-1} \Big\}.
\end{align}
Also define $ \hrchy{n} $ and $ \hrchy[p]{n} $ respectively as closures under positive Boolean operations of $ S^{c}_{n} $ and $ P^{c}_{n}. $ Finally consider the following sets:
\begin{align*}
& S^{c}:=\bigcup_{n\in\omega} S^{c}_{n},\quad P^{c}:=\bigcup_{n\in\omega}P^{c}_{n},\quad \hrchy[h]{n}:=\hrchy{n}\cup\hrchy[p]{n},\\
& \hrchy{}:=\bigcup_{n\in\omega}\hrchy{n},\quad \hrchy[p]{}:=\bigcup_{n\in\omega}\hrchy[p]{n},\quad\text{and}\quad \hrchy[h]{}:=\bigcup_{n\in\omega}\hrchy[h]{n}.
\end{align*}
\begin{dfn}\label{dfnClosureFormula}
By a \textit{closure formula} we mean a formula in $ \hrchy[h]{}. $ A closure formula is called \textit{basic}, if it lies in $ S^{c}\cup P^{c}. $
\end{dfn}


To avoid confusion, for every basic closure formula $ \phi(\bar{x}) $ sometimes we have added $ (A,B) $ as subscript to $ \phi(\bar{x}), $ hence we display it by $ \phi_{(A,B)}(\bar{x}), $ to emphasise on the initial diagrams appearing in $ \phi(\bar{x}). $

For the proof of \Cref{thmQE}, we need to work with the diagrams that appear in basic closure formulas. To each basic closure formula $ \phi_{(A,B)}(\bar{x})\in S^{c}\cup P^{c}, $ we associate a tree $ \tau_{\phi_{(A,B)}}, $ as in \Cref{dfnTree}. For such a formula $ \phi_{(A,B)}(\bar{x}), $ its tree $ \tau_{\phi_{(A,B)}} $ is comprised of a pair $ \langle V_{\phi_{(A,B)}},E_{\phi_{(A,B)}}\rangle $ and is defined inductively as follows.
\begin{dfn}\label{dfnTree}
For $ \phi_{A}(\bar{x})\in S^{c}_{0}\cup P^{c}_{0}, $ define $ \tau_{\phi_{A}} =\langle \{A\},\emptyset\rangle.$ For $ n\geq 1, $ if the formula $ \phi_{(A,B)}(\bar{x})\in S^{c}_{n}\cup P^{c}_{n} $ is of the form of the elements in (\ref{eqnScn}) or (\ref{eqnPcn}) define
\begin{align*}
&V_{\phi_{(A,B)}}=\{A,B\}\cup\bigcup_{i=1}^{m}V_{\theta_{(B,C_{i})}},\\
&E_{\phi_{(A,B)}}=\{(A,B)\}\cup\bigcup_{i=1}^{m}\{(B,C_{i})\}\cup E_{\theta_{(B,C_{i})}}.
\end{align*}
We call $ A $ the root of $ \tau_{\phi_{(A,B)}} $ and denote by $ \lambda_{\phi} $ the number of the leaves of $ \tau_{\phi_{(A,B)}}. $
\end{dfn}

One can easily check that there is a one-to-one correspondence between trees and formulas. The next remark and lemmas demonstrate some properties of closure formulas and propose a more canonical way to describe them. They also provide tools needed for proving \Cref{thmQE}.

\begin{rmrkB}\label{rmrkBasicFactsAboutClosureFormula}
(i) $ \hrchy{0}=\hrchy[p]{0}, S^{c}_{n}\subseteq S^{c}_{n+1}, \hrchy{n}\subseteq\hrchyLong{n+1},  \hrchy[p]{n}\subseteq\hrchyLong[p]{n+1}, $ for any $ n\in\omega. $ Moreover $ \hrchy[p]{}\subseteq\hrchy{}. $\\
(ii) Every $ S^{c}_{1} $-formula is equivalent either to an antilogy or to a formula of the form $ \diagClIExTupLParenth{B}{x}{}{}{y}{}{}, $ for some $ \intCl{A}{B}\in\classPosit{}. $\\
(iii) Every $ P^{c}_{1} $-formula is equivalent to a tautology, an antilogy, or to a formula of the form $ \diagClIAllTupLParenth{B}{x}{}{}{y}{}{neg}, $ for some $ \intCl{A}{B}\in\classPosit{}. $
\end{rmrkB}

\begin{lmaB}\label{lmaHierarchyCanonicalFormSigma}
(i) If $ \frmI{x}{1}\in P^{c}_{m}$ and $ \frmI{x}{2}\in P^{c}_{m'}, $ then $ \frmI{x}{1}\wedge\frmI{x}{2} $ is logically equivalent to a formula of the following form:
\[ \Big(\frmI{x}{1}\wedge\frmI[\phi']{x}{2}\Big)\vee\Big(\frmI{x}{2}\wedge\frmI[\phi']{x}{1}\Big)\vee\bigvee_{i=1}^{k}\frmI[\chi]{x}{i}, \]
where $ \frmI[\phi']{x}{1},\frmI[\phi']{x}{2}\in P^{c}_{1} $ and for each $ i, $ $ \frmI[\chi]{x}{i}\in P^{c}_{n}, $ where $ n=\max\{m,m'\}. $ Hence a conjunction of $ \hrchy[p]{} $-formulas is equivalent to a disjunction of $ P^{c} $-formulas.
\\
(ii) If $ \frmI{x}{1}\in S^{c}_{m} $ and $\frmI{x}{2}\in S^{c}_{m'},$ then $ \frmI{x}{1}\wedge\frmI{x}{2} $ is equivalent to a disjunction of $ S^{c}_{n} $-formulas, where $ n=\max\{m,m'\}. $\\
(iii) Let $ \phi(\bar{x}\bar{y}\bar{z}) $ be a quantifier free $ \mathcal{L} $-formula implying $ \diagCl[A]{B}{\bar{x}}{\bar{y}}, $ for some $ \intCl{A}{B}\in\classPosit{}. $ Then for any $ \psi_{1}(\bar{x}\bar{y}\bar{z}),\ldots,\psi_{m}(\bar{x}\bar{y}\bar{z})\in S^{c}_{n}, $ the following formula, $ \theta(\bar{x}\bar{y}\bar{z}) $, is a $ \hrchyLong[p]{n+1} $-formula:
\[ \forall\bar{y}\Big(\phi(\bar{x}\bar{y}\bar{z})\rightarrow\bigvee_{i=1}^{m}\psi_{i}(\bar{x}\bar{y}\bar{z})\Big). \]
\end{lmaB}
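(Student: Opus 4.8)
\emph{Proof plan.}\ All three statements run on two ingredients: the closure properties of $\leq^{*}_{i}$-intrinsic extensions in \Cref{lmaIntClUnion}---especially that the structure induced on the union of two intrinsic extensions of a common base is again intrinsic over it---and submodularity of $\delta_{\alpha}$ (clause~(3) of \Cref{dfnPredimension}), used through the concrete criterion for $\leq^{*}_{i}$ in \Cref{factIntrisicForPredim}(ii). I also use the basic semantic reading of closure formulas: since $\intCl{A}{B}$ depends only on the isomorphism type, any copy of such a $B$ over a realisation $\bar{a}$ of $A$ lies in the $\leq^{*}$-closure of $\bar{a}$, so a formula from $S^{c}_{n}$ or $P^{c}_{n}$ quantifies only over configurations inside that closure. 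I would prove (iii) and (ii) directly, then deduce (i) from (ii).

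For (iii): rewrite the quantifier-free $\phi(\bar{x}\bar{y}\bar{z})$ as a disjunction of complete atomic diagrams $\diag_{C}(\bar{x}\bar{y}\bar{z})$ and pull $\forall\bar{y}$ inside, so $\theta$ becomes a conjunction of formulas $\forall\bar{y}\,(\diag_{C}(\bar{x}\bar{z},\bar{y})\rightarrow\bigvee_{i}\psi_{i})$; since $\Pi^{c}_{n+1}$ is closed under conjunction and a diagram of a structure lying outside the relevant class contributes only a tautology, it suffices to treat one $C$ with $C|_{\bar{x}\bar{y}}\cong B$, $C|_{\bar{x}}\cong A$ and $\intCl{A}{B}$. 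The key point is that $\intCl{C|_{\bar{x}\bar{z}}}{C}$: given $C|_{\bar{x}\bar{z}}\subseteq C''\subsetneq C$, write $C''=C|_{\bar{x}\bar{z}}\cup Y''$ with $Y''\subsetneq\bar{y}$; then $C''\cup B=C$ while $C''\cap B=C|_{\bar{x}Y''}$, so submodularity together with $\delta_{\alpha}(B)\le\delta_{\alpha}(C|_{\bar{x}Y''})$---which holds by \Cref{factIntrisicForPredim}(ii), since $\bar{x}\subseteq\bar{x}Y''\subsetneq B$ and $\intCl{A}{B}$---yields $\delta_{\alpha}(C)\le\delta_{\alpha}(C'')$; by \Cref{factIntrisicForPredim}(ii) once more this gives $\intCl{C|_{\bar{x}\bar{z}}}{C}$, and $\forall\bar{y}\,(\diag_{C}(\bar{x}\bar{z},\bar{y})\rightarrow\bigvee_{i}\psi_{i})$ is then literally a $P^{c}_{n+1}$-formula (base $C|_{\bar{x}\bar{z}}$, full extension $C$).

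For (ii): write $\phi_{k}=\exists\bar{y}_{k}\,(\diag_{(A,B_{k})}(\bar{x},\bar{y}_{k})\wedge\eta_{k})$ with $\eta_{k}$ a conjunction of $P^{c}_{m_{k}-1}$-formulas (the low-level cases $m_{k}=0$ being immediate, and if the two induced diagrams on $\bar{x}$ disagree then $\phi_{1}\wedge\phi_{2}$ is an antilogy). By \Cref{lmaIntClUnion}(ii), witnesses for $\phi_{1}$ and $\phi_{2}$ in any $M\in\bar{\mathcal{C}}$ span with $\bar{x}$ an intrinsic extension $D$ of $A$ that is an amalgam of $B_{1}$ and $B_{2}$ over $A$, and conversely every such amalgam realised over $\bar{x}$ supplies witnesses for both; since there are finitely many amalgam types, $\phi_{1}\wedge\phi_{2}$ is equivalent to the disjunction, over the realisable amalgams $D$, of $\exists\bar{y}\,(\diag_{(A,D)}(\bar{x},\bar{y})\wedge\eta_{1}\wedge\eta_{2})$ with the matrices reindexed along the embeddings $B_{1},B_{2}\hookrightarrow D$---an $S^{c}_{n}$-formula, using the monotonicity inclusions $S^{c}_{k}\subseteq S^{c}_{k+1}$ (cf.\ \Cref{rmrkBasicFactsAboutClosureFormula}) and $P^{c}_{k}\subseteq P^{c}_{k+1}$ (immediate).

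For (i): set $\phi_{k}=\forall\bar{y}_{k}\,(\diag_{(A,B_{k})}(\bar{x},\bar{y}_{k})\rightarrow\alpha_{k})$ with $\alpha_{k}$ a disjunction of $S^{c}_{m_{k}-1}$-formulas, and take $\phi'_{k}\in P^{c}_{1}$ to assert ``$\bar{x}$ has no $(A,B_{k})$-extension'', collapsing to $\bot$ or $\top$ in the trivial subcases isolated by \Cref{rmrkBasicFactsAboutClosureFormula}. If $\bar{x}$ admits no $(A,B_{2})$-extension then $\phi_{2}$ is vacuous and $\phi_{1}\wedge\phi_{2}\equiv\phi_{1}\wedge\phi'_{2}$ (symmetrically for $B_{1}$, which also disposes of the case where the two diagrams pin down different structures on $\bar{x}$), so the third disjunct $\bigvee_{i}\chi_{i}$ only has to reproduce $\phi_{1}\wedge\phi_{2}$ on models where both extensions are present. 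There I would merge the two universal quantifiers onto a single diagram: decomposing $\intCl{A}{B_{k}}$ into the tower of minimal pairs from \Cref{lmaIntClUnion}(v) and using the genericity of $M^{*}_{\alpha}$, one shows that---once the complementary extension exists---every $(A,B_{k})$-extension of $\bar{x}$ extends to a prescribed amalgam $D$ of $B_{1}$ and $B_{2}$ over $A$, whence $\phi_{1}\wedge\phi_{2}$ becomes $\forall\bar{y}\,(\diag_{(A,D)}(\bar{x},\bar{y})\rightarrow(\alpha_{1}\wedge\alpha_{2}))$ with $\alpha_{k}$ reindexed; and $\alpha_{1}\wedge\alpha_{2}$, a conjunction of disjunctions of $S^{c}_{n-1}$-formulas, collapses to a disjunction of $S^{c}_{n-1}$-formulas by part (ii), placing the formula in $P^{c}_{n}$. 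The ``Hence'' clause then follows by putting an arbitrary $\Pi^{c}$-conjunction into disjunctive normal form and iterating the two-conjunct case. I expect this ``both extensions present'' case of (i) to be the real obstacle: one must control, uniformly over all models of $\theory(M^{*}_{\alpha})$, which amalgams of $B_{1}$ and $B_{2}$ over $A$ are realised and how $(A,B_{k})$-extensions embed in them---delicate exactly because AC fails, so that $\leq^{*}_{i}$-extensions of relative $\delta_{\alpha}$-predimension $0$ have unboundedly many copies---so that the merge above genuinely yields a disjunction of $P^{c}_{n}$-formulas rather than merely a $\Pi^{c}_{n}$-formula.
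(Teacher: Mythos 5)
Your parts (iii) and (ii) are essentially in order. For (iii) you take a genuinely different route from the paper: rather than keeping the base $(A,B)$ and absorbing $\neg\phi$ into the disjunction as complete-diagram ($S^{c}_{0}$) disjuncts, you enlarge the base to the structure induced on $\bar{x}\bar{z}$ and check $\intCl{C|_{\bar{x}\bar{z}}}{C}$ by a submodularity computation via \Cref{factIntrisicForPredim}(ii); that computation is correct, and it even treats the extra free variables $\bar{z}$ more explicitly than the paper's own argument. Your (ii) is the same finite-amalgam-enumeration argument that the paper carries out for (i) and then cites for (ii).

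The genuine gap is in (i), exactly at the case you flag yourself. Your plan for the ``both extensions present'' case is to merge the two universal quantifiers onto a single prescribed amalgam $D$ of $B_{1}$ and $B_{2}$ over $A$, justified by the genericity of $M^{*}_{\alpha}$. This cannot work. Since $\intCl{A}{B_{k}}$, every copy of $B_{k}$ over a tuple $\bar{a}$ lies inside the finite closure $\cl^{*}_{M^{*}}(\bar{a})$; genericity only supplies strong ($\leq^{*}$) extensions over closed sets, so it can neither create new intrinsic copies nor constrain the relative position of the copies already present. Hence there is no single union-diagram $D$ such that every copy of $B_{1}$ (resp.\ $B_{2}$) over $\bar{a}$ extends to a copy of $D$: the closure may contain one copy of $B_{1}$ that is free over $\bar{a}$ from some copy of $B_{2}$ and another copy of $B_{1}$ that meets every copy of $B_{2}$ outside $A$, and these realize different diagrams of $B_{1}\cup B_{2}$ over $A$. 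Consequently $\forall\bar{y}\big(\diag_{(A,D)}(\bar{x},\bar{y})\rightarrow\alpha_{1}\wedge\alpha_{2}\big)$ does not reproduce $\phi_{1}\wedge\phi_{2}$: copies of $B_{k}$ that do not sit inside any copy of $D$ escape the quantifier altogether. (Note also that part (i) asserts a \emph{logical} equivalence, whereas an appeal to genericity would at best give equivalence modulo $\theory(M^{*}_{\alpha})$.) The missing idea, which is the paper's, is not to fix one amalgam but to enumerate \emph{all} of the finitely many possible diagrams $C'_{1},\ldots,C'_{k}$ of $B_{1}\cup B_{2}$ over $A$ --- each intrinsic over $A$ by \Cref{lmaIntClUnion} --- and to re-express the ``both present'' case by quantifying over copies of each $C'_{i}$, the matrix being built from $\alpha_{1},\alpha_{2}$ reindexed to the respective parts and collapsed to a disjunction of $S^{c}_{n-1}$-formulas by (ii); no genericity enters at any point. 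Until the single-$D$ merge is replaced by this per-diagram bookkeeping, your part (i) is not proved.
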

\begin{proof} If $ n=\max\{m,m'\}, $ based on \Cref{rmrkBasicFactsAboutClosureFormula} we can assume that $ \frmI{x}{1}, \frmI{x}{2}\in P^{c}_{n}. $ So suppose that $ \frmI{x}{1} $ and $ \frmI{x}{2} $ are respectively as followings:
\begin{align*}
&\diagClIAllTupLParenth[A_{1}]{C}{x}{}{}{z}{1}{B}\bigvee_{j=1}^{m_{1}}\theta^{1}_{j}(\bar{x}\bar{z})\Big),\\
&\diagClIAllTupLParenth[A_{2}]{C}{x}{}{}{z}{2}{B}\bigvee_{j=1}^{m_{2}}\theta^{2}_{j}(\bar{x}\bar{z})\Big).
\end{align*}
If $ A_{1}\not\cong A_{2} $, it is easy to check that $ \frmI{x}{1}\wedge\frmI{x}{2} $ is logically equivalent to
$ \frmI{x}{1}\vee\frmI{x}{2}. $ So suppose that $ A_{1}\cong A_{2} $ and name this common structure $ A. $ Then define $ \phi'_{1}(\bar{x}) $ and $ \phi'_{2}(\bar{x}) $ respectively to be $ \diagClIAllTupLParenth{C}{x}{}{}{z}{1}{neg} $ and $ \diagClIAllTupLParenth{C}{x}{}{}{z}{2}{neg}. $ Note that for a tuple $ \bar{a} $ there are actually four possible cases; the first one is when neither $ C_{1} $ nor $ C_{2} $ are satisfied over $ \bar{a}. $ The second case is when $ C_{1} $ is satisfied over $ \bar{a} $ but $ C_{2} $ is not. The third case is when $ C_{2} $ is satisfied over $ \bar{a} $ but $ C_{1} $ is not. And the last case, when $ C_{1} $ and $ C_{2} $ are both satisfied over $ \bar{a}. $ It can be easily seen that in the first three cases, $ \frmI{x}{1}\wedge\frmI{x}{2} $ is respectively equivalent to $ \phi'_{1}(\bar{x}) \wedge\phi'_{2}(\bar{x}), \frmI{x}{1}\wedge\phi'_{2}(\bar{x}) $ or $ \frmI{x}{2}\wedge\phi'_{1}(\bar{x}). $ Now in the last case, note that by \Cref{lmaIntClUnion} we have $ \intCl{A}{C_{1}C_{2}}. $ Notice that there is a finite number of possible diagrams $ C'_{1},\ldots,C'_{k} $ for $ C_{1}\cup C_{2} $ to be realized over $ A ,$ and therefore the conjunction $ \frmI{x}{1}\wedge\frmI{x}{2} $ is logically equivalent to:
\begin{align*}
\bigvee_{i=1}^{k}\Big[\diagClIAllTupLParenth{C'}{x}{}{}{z}{i}{B}\bigdoublewedge_{\substack{j=1,l=1}}^{\hspace*{2pt}m_{1},m_{2}}(\theta^{1}_{j}(\bar{x}\bar{z}_{i})\vee\theta^{2}_{l}(\bar{x}\bar{z}_{i}))\Big)\Big],
\end{align*}
which introduces the required $ \frmI[\chi]{x}{i} $s.

(ii) By essentially a similar argument to (i).

(iii) First note that the formula $ \theta(\bar{x}\bar{y}\bar{z}) $ is equivalent to:
\begin{align}\label{eqnTemp1}
\diagClIAllTupLParenth{B}{x}{}{}{y}{}{B}\neg\phi(\bar{x}\bar{y}\bar{z})\vee\bigvee_{i=1}^{m}\psi_{i}(\bar{x}\bar{y}\bar{z})\Big),
\end{align}
and for a set $ C $ with $ \card[C]=\card[\bar{z}], $ there exist finitely many complete diagrams which have $ B\cup C $ as their universe and extend $ B $ as their substructure. Enumerate them by $ BC_{1},\ldots,BC_{l} $ and notice that since the language is finite relational, \Cref{eqnTemp1} is equivalent to:
\[ \diagClIAllTupLParenth{B}{x}{}{}{y}{}{B}\bigvee_{k=1}^{l}Diag_{ABC_{k}}(\bar{x}\bar{y}\bar{z})\vee\bigvee_{i=1}^{m}\psi_{i}(\bar{x}\bar{y}\bar{z})\Big). \]

Therefore by part (i) of \Cref{rmrkBasicFactsAboutClosureFormula}, this formula is a $ \hrchyLong[p]{n+1} $-formula.
\end{proof}

\begin{crl}\label{crlCanonicalForm}
Every closure formula $ \phi(\bar{x}\bar{y}) $ is equivalent to a disjunction of $ S^{c} $-formulas of the following form
\begin{equation}\label{eqnSigmaWLG}
\diagClIExTupLParenth[AB]{C}{}{\bar{x}\bar{y}}{}{z}{}{B}\bigwedge_{i=1}^{m}\theta_{i}(\bar{x}\bar{y}\bar{z})\wedge\theta(\bar{x}\bar{y}\bar{z})\Big),
\end{equation}
where $ \theta(\bar{x}\bar{y}\bar{z})\in P^{c}_{n-1}, $ and for each $ 1\leq i\leq m, $ we have $ \theta_{i}(\bar{x}\bar{y}\bar{z})\in P^{c}_{1}. $ \qed
\end{crl}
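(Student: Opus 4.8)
The plan is to establish Corollary~\ref{crlCanonicalForm} as a normal-form statement for closure formulas, built on \Cref{lmaHierarchyCanonicalFormSigma} and \Cref{rmrkBasicFactsAboutClosureFormula}. The first reduction is to show that every closure formula in $\hrchy[h]{}$ is equivalent to a disjunction of $S^{c}$-formulas. By \Cref{rmrkBasicFactsAboutClosureFormula}(i), $\hrchy[p]{}\subseteq\hrchy{}$, so a closure formula is a positive Boolean combination of $S^{c}$-formulas. Distributing conjunctions over disjunctions, it suffices to handle a single conjunction $\frmI{x}{1}\wedge\cdots\wedge\frmI{x}{r}$ of $S^{c}$-formulas; and by induction on $r$ together with \Cref{lmaHierarchyCanonicalFormSigma}(ii), any such conjunction is equivalent to a disjunction of $S^{c}_{n}$-formulas, where $n$ is the maximum of the levels involved. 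Hence every closure formula is equivalent to a disjunction of $S^{c}$-formulas.

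The second, and main, step is to normalise a single $S^{c}_{n}$-formula into the shape displayed in \eqref{eqnSigmaWLG}. Write the $S^{c}_{n}$-formula in its defining form from \eqref{eqnScn}, namely $\exists\bar z\big(\diagCl[AB]{C}{\bar x\bar y}{\bar z}\wedge\bigwedge_{j}\vartheta_{j}(\bar x\bar y\bar z)\big)$ with each $\vartheta_{j}\in P^{c}_{n-1}$. Applying \Cref{lmaHierarchyCanonicalFormSigma}(i) to the conjunction $\bigwedge_{j}\vartheta_{j}$, we rewrite it as a disjunction of $P^{c}_{n-1}$-formulas; distributing the existential quantifier and the diagram conjunct over this disjunction reduces us to a single $P^{c}_{n-1}$-conjunct. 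It remains to see that a $P^{c}_{n-1}$-formula can be broken into the pattern $\bigwedge_{i=1}^{m}\theta_{i}\wedge\theta$ with $\theta_{i}\in P^{c}_{1}$ and $\theta\in P^{c}_{n-1}$: this is almost vacuous when $n-1\ge 1$ — take $m=0$, or peel off the canonical $P^{c}_{1}$-conjuncts supplied by the defining form \eqref{eqnPcn} of a $P^{c}$-formula — and when $n-1=0$ one uses \Cref{rmrkBasicFactsAboutClosureFormula}(i), which says $\hrchy{0}=\hrchy[p]{0}$, so a $P^{c}_{0}$-formula is already (up to positive Boolean operations, absorbed by the outer disjunction) a conjunction of the negated-diagram atoms that constitute a $P^{c}_{1}$-formula via \Cref{rmrkBasicFactsAboutClosureFormula}(iii). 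Throughout, \Cref{lmaIntClUnion} guarantees that when we merge diagrams $C,C'$ over a common base $AB$ the result is still an intrinsic extension, so the amalgamated diagrams we produce genuinely index $S^{c}$-formulas, and there are only finitely many such diagrams on a fixed universe since the language is finite relational.

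The main obstacle I expect is purely bookkeeping: keeping track of which variables are bound where, and ensuring that after distributing quantifiers past disjunctions the bound tuple $\bar z$ in \eqref{eqnSigmaWLG} is correctly renamed and that dummy variables are inserted so that the diagrams $ABC_{i}$ match in arity. One must also be careful that the reduction of the inner $P^{c}_{n-1}$-part into $P^{c}_{1}$-conjuncts plus one $P^{c}_{n-1}$-tail does not silently raise the level: \Cref{lmaHierarchyCanonicalFormSigma}(i) is stated precisely to keep the $\chi_{i}$-terms at level $n$ (here $n-1$), so invoking it rather than naively re-expanding is essential. Given those lemmas, the corollary follows by the distributive-law manipulations just outlined, so I would present it as a short deduction: reduce to a disjunction of $S^{c}_{n}$-formulas via \Cref{lmaHierarchyCanonicalFormSigma}(ii), then to a single $S^{c}_{n}$-formula per disjunct, then apply \Cref{lmaHierarchyCanonicalFormSigma}(i) to its inner conjunction and \Cref{rmrkBasicFactsAboutClosureFormula} to extract the $P^{c}_{1}$ and $P^{c}_{n-1}$ pieces.
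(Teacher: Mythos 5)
Your derivation is correct and is essentially the argument the paper intends: the corollary is stated with no proof precisely because it follows by the distributivity/bookkeeping you describe, using Remark~\ref{rmrkBasicFactsAboutClosureFormula}(i) and Lemma~\ref{lmaHierarchyCanonicalFormSigma}(ii) to reach a disjunction of $S^{c}_{n}$-formulas and then Lemma~\ref{lmaHierarchyCanonicalFormSigma}(i) on the inner conjunction of $P^{c}_{n-1}$-formulas. Just note that the displayed output of Lemma~\ref{lmaHierarchyCanonicalFormSigma}(i) consists of disjuncts that are a $P^{c}$-formula conjoined with $P^{c}_{1}$-formulas (not single $P^{c}$-formulas), which is exactly why the shape \eqref{eqnSigmaWLG} carries the conjuncts $\theta_{i}\in P^{c}_{1}$ --- your ``peel off the $P^{c}_{1}$-conjuncts'' reading is the accurate one, rather than the $m=0$ shortcut.
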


Notice that in \cite{Baldwin&Shelah-Randomness}, in fact a ``combinatorial measurement" was derived for every $ \alpha\in(0,1] $ in such a way that it decides the satisfaction of a given formula $ \phi(\bar{x}) $ uniformly in all models of $ T_{\alpha\mhyphen \sem}. $ More precisely, since AC holds in $ \classPositEqOpr{\alpha} $, there exists a natural number $ l_{\phi} $ such that for any $ M\models  T_{\alpha\mhyphen \sem}, $ and any $ \bar{a}\in M, $ the satisfaction of $ \phi(\bar{a}) $ depends only on the isomorphism type of $ cl^{l_{\phi}}_{M}(\bar{a}). $ Again $ cl^{l_{\phi}}_{M}(\bar{a}), $ thanks to AC, is finite (uniformly in all models of $ T_{\alpha\mhyphen \sem} $ ) and hence $ T_{\alpha\mhyphen \sem} $ is both complete and near-model-complete.

On another basis, in our case we propose a measurement in terms of ``logical complexity" which relates the satisfaction of $ \phi(\bar{x}) $ to a formula which describes a bounded part of the closure of $ \bar{x}. $ To introduce this measurement a machinery needs to have appeared before, that works even in the absence of AC. More precisely, for any $ \phi(\bar{x}) $ one can find a natural number $ n=n_{\phi} $ and a closure formula in $ \hrchy[h]{n} $ that is equivalent to $ \phi(\bar{x}) $ under $ T^{*}. $

The next definition, distinguishes the non-AC from the AC part inside $ \classPositOpr{}. $ In this definition we highlight a distinction among closure formulas, and introduce a dividing line one side of which contains the formulas with a finite nature as in the classes with AC. In fact this \textit{non-primary} closure formulas are the ones constructed from $ \leq $-intrinsic extensions. But the other side of the dividing line contains the \textit{primary} formulas that in the classes without AC, constitute the noteworthy part of closure formulas. It can be subsequentially seen in the proof of \Cref{lmaPrimaryFormulas} how the non-primary formulas collapse to the lower levels of the hierarchy, namely to $ \hrchy[s]{1} $-formulas. The definition of primary formulas is done by induction on the levels of $ \hrchy[h]{}. $
\begin{dfn}\label{dfnPrimaryClosureFormula} We define \textit{primary} formulas as follows:\\
(i) Both $ P^{c}_{0} $ and $ P^{c}_{1} $-formulas are \textit{primary}. For $ n\geq 2, $ the formula $ \phi_{(A,B)}(\bar{x})\in P^{c}_{n} $ (which is of the form of the elements of (\ref{eqnPcn})) is primary if $ A\leq B $ and for each $ 1\leq i\leq m $ we have $ B\leq C_{i}. $\\
(ii) Every $ S^{c}_{0} $-formula is primary. For $ n\geq 1, $ the formula $ \phi_{(A,B)}(\bar{x})\in S^{c}_{n} $ (which is of the form of the elements of (\ref{eqnScn})) is primary if for each $ 1\leq i\leq m $ the formula $ \theta_{(B,C_{i})}(\bar{x}\bar{y}) $ is a primary $ P^{c}_{n-1} $-formula.\\
\end{dfn}

Towards simplifying formulas dealt with in the proof of \Cref{thmQE}, next lemma enables us to further reduce $ S^{c} $-formulas to the primary ones. The proof of this lemma suggests a useful interaction between the notions of $ \cl $ and $ \cl^{*}. $ In fact the idea is to eliminate the negative part of $ \leqst{}{} $-intrinsic extensions, namely the $ \cl $-part, by replacing them with existential closure formulas. The adequacy of this approach is guaranteed by \Cref{factUpperBound} which introduces a uniform bound on the number of copies of $ \leq $-intrinsic extensions.
\begin{lma}\label{lmaPrimaryFormulas}
Every closure formula is equivalent to a disjunction of primary $ S^{c} $-formulas.
\end{lma}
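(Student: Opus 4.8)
The plan is to argue by induction on the hierarchy level $n$, reducing an arbitrary $S^c$-formula to a disjunction of primary $S^c$-formulas, using \Cref{crlCanonicalForm} to put the formula in the canonical shape \eqref{eqnSigmaWLG} first, and then peeling off non-primary layers from the outside in. By \Cref{rmrkBasicFactsAboutClosureFormula} and \Cref{lmaHierarchyCanonicalFormSigma} it suffices to treat a single $S^c_n$-formula of the form
\[
\diagClIExTupLParenth[AB]{C}{}{\bar{x}\bar{y}}{}{z}{}{B}\bigwedge_{i=1}^{m}\theta_{i}(\bar{x}\bar{y}\bar{z})\wedge\theta(\bar{x}\bar{y}\bar{z})\Big),
\]
with $\theta\in P^c_{n-1}$ and $\theta_i\in P^c_1$, and to show that modulo $T^*$ it is equivalent to a disjunction of primary formulas. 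The base cases $n=0,1$ are immediate since $S^c_0$-formulas are primary and, by \Cref{rmrkBasicFactsAboutClosureFormula}(ii), every $S^c_1$-formula is already of primary shape.

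The core of the argument is the inductive step, where the key device is to replace the $\cl$-part of a $\leq^*$-intrinsic extension by an existential closure formula. Concretely, suppose $\intCl{AB}{ABC}$ but not $AB\leq ABC$. Using \Cref{factIntrisicForPredim} one can factor the extension: there is an intermediate $D$ with $AB\subseteq D\subseteq ABC$ such that $AB\leq^* D$ witnesses the "$\delta$-preserving" ($\cl^*$) part while $D\leq_{i,\alpha} ABC$ is the genuine algebraic ($\cl$) part, i.e.\ $\delta_\alpha(ABC)<\delta_\alpha(D)$. Now \Cref{factUpperBound} gives a uniform bound $\mu_\alpha(\card[D],\card[ABC])$ on $\chi_M(ABC/D)$ valid in every $M\in\bar{\mathcal C}$ (hence in every model of $T^*$), so the quantifier "$\exists\bar z$ realizing $\diag_{(AB,ABC)}$" over the algebraic part is equivalent, modulo $T^*$, to a finite disjunction over the possible configurations of the (boundedly many) copies. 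Each such disjunct rewrites as an $S^c$-formula whose top extension is now a genuine $\leq$-extension; the quantifiers introduced for the $\cl^*$-part remain, but by genericity/semigenericity the inner $P^c_{n-1}$-formula $\theta$ may be pushed inside and, by the induction hypothesis applied to the subformulas $\theta_{(B,C_i)}$, replaced by primary $P^c_{n-1}$-formulas. One must also check that the newly created $P^c$-subformulas are themselves primary, which follows by the dual induction on $P^c_n$ (noting $P^c_0,P^c_1$ are primary by fiat), and that the resulting top-level structure satisfies the $A\leq B$, $B\leq C_i$ conditions of \Cref{dfnPrimaryClosureFormula}.

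The main obstacle I expect is bookkeeping the interaction between the two closure operators when the non-primary part is nested deep inside the formula rather than at the top: eliminating an inner $\leq_{i,\alpha}$-extension changes the diagram over which the \emph{outer} quantifiers range, so one cannot simply quote \Cref{factUpperBound} once and be done — the reduction has to be performed simultaneously with, or interleaved into, the induction on $n$, and one must verify that the finite case-split it produces does not destroy the canonical form of \Cref{crlCanonicalForm} at the next level down. A secondary but real difficulty is ensuring uniformity over all models of $T^*$: since $T^*_\alpha$ lacks AC, the passage from "true in $M^*_\alpha$" to "provable in $T^*$" is only legitimate for the $\leq$-part, which is exactly why the factorization through $\leq^*$ and $\leq_{i,\alpha}$ is essential and must be done with care. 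Once these points are handled, collecting the disjuncts and invoking \Cref{lmaHierarchyCanonicalFormSigma}(ii) to merge them yields the claimed disjunction of primary $S^c$-formulas.
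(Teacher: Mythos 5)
Your guiding slogan (eliminate the $\cl$-part of $\leq^{*}$-intrinsic extensions via \Cref{factUpperBound} and bounded existential closure formulas) is exactly the heuristic the paper states before the lemma, but the reduction you build from it is aimed at the wrong place and rests on an unjustified factorization. After \Cref{crlCanonicalForm}, primariness imposes \emph{no} condition on the top-level existential extension of an $S^{c}_{n}$-formula in the form \eqref{eqnSigmaWLG}: by \Cref{dfnPrimaryClosureFormula} only the inner $P^{c}$-subformulas matter, and for those the requirements are $A\leq B$ and $B\leq C_{i}$. So your central step, treating ``$\exists\bar z$ realizing $\diag_{(AB,ABC)}$ with $AB\not\leq ABC$'', addresses a non-issue, while the actual burden --- the universal ($P^{c}$) subformulas --- is left to vague remarks. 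Moreover your factorization ``$AB\leq^{*}D$ below, $D\leq_{i,\alpha}ABC$ above'' is not something \Cref{factIntrisicForPredim} provides and is the reverse of the canonical one: the negative part of an intrinsic extension is collected at the \emph{bottom}, as $B_{1}=\cl_{B}(A)$ with $A\leq_{i}B_{1}$ and $B_{1}\leq B$. This matters concretely: \Cref{factUpperBound} must be applied to a $\leq$-intrinsic extension \emph{of the parameters} $\bar x$ (copies of $B_{1}$, or of $E_{i}$ below, over $A$), so that the bounded count $\exists^{\leq\eta}$ is expressible by closure formulas in the free variables and can be placed outside the universal quantifier; a bound on $\chi(ABC/D)$ over an intermediate $D$ whose own multiplicity over $AB$ is unbounded (the $\delta$-preserving part) cannot be converted into such a statement.

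The key missing idea is how to get past the universal quantifier in a $P^{c}_{n}$-formula when some inner extension satisfies $B\not\leq C_{i}$. The paper sets $E_{i}=\cl_{C_{i}}(A)$ --- the closure over $A$, not over $B$ --- shows by a predimension computation that $\delta(B'_{i}/B_{i})=0$ and that $E_{i}$ is freely joined with $B'_{i}=B\setminus E_{i}$ over $B_{i}=B\cap E_{i}$ (otherwise $\delta(B'_{i}/E_{i})<0$ would contradict $E_{i}\leq E_{i}C'_{i}$), and uses exactly this freeness to transfer a realization of $E_{i}$ from one copy of $B$ to any other copy sharing $B_{i}$. That transfer is what legitimizes pulling the bounded-multiplicity pieces $E_{i}$ out in front of the universal quantifier, asserting them over $\bar x$ with $\exists^{\leq\eta_{i}}$, and leaving inside only the condition ``$\bar y\cap\bar z_{i}=B_{i}$'' together with the $\leq$-extension $C'_{i}$ over $E_{i}$ --- which is precisely what makes the resulting formula primary. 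Your proposal has no counterpart to this step: ``by genericity/semigenericity the inner $P^{c}_{n-1}$-formula may be pushed inside'' is not an argument, and ``by the induction hypothesis \ldots replaced by primary $P^{c}_{n-1}$-formulas'' misquotes the statement being proved, which yields disjunctions of primary $S^{c}$-formulas, not primary $P^{c}$-formulas. Without the extraction of $E_{i}$ over $A$, the freeness/transfer argument, and the resulting relocation of the bounded part across the universal quantifier, the proof does not go through.
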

\begin{proof}
According to \Cref{crlCanonicalForm} and \Cref{dfnPrimaryClosureFormula} we only need to show that any $ P^{c}_{n} $-formula $ \phi_{(A,B)}(\bar{x}), $ is equivalent to a disjunction of primary $ S^{c} $-formulas. A formula $ \phi_{(A,B)}(\bar{x})\in P^{c}_{n} $ is of the form of the elements of (\ref{eqnPcn}). We first show that we may assume that $ A\leq B. $ So suppose on the contrary that $ A\not{\leq}B. $ Then define $ B_{1}:=\cl_{B}(A) $ and let $ B'=B\backslash B_{1}. $ Therefore $ B_{1}\neq B $ and $ \intCl[\leq]{A}{B_{1}}. $ Also it is obvious that $ \delta(B'/B_{1})=0. $ Now by \Cref{factUpperBound} there is a function $ \mu $ which uniformly bounds the number of copies of $ B_{1} $ over $ A. $ Set $ \eta = \mu(\card[A],\card[B']). $ The formula $ \phi_{(A,B)}(\bar{x}) $ is equivalent to the following formula
\[ \existsScaled{1.5}^{\leq\eta}\bar{y}\Big[\diagCl{B_{1}}{\bar{x}}{\bar{y}}\wedge\diagClIAllTupLParenth[B_{1}]{B'}{}{\bar{x}\bar{y}}{}{z}{}{B}\bigvee_{i=1}^{m}\theta_{i}(\bar{x}\bar{y}\bar{z})\Big)\Big],\]
with $ \card[\bar{y}]=\card[B_{1}], \card[\bar{z}]=\card[B'] $ and $ B_{1}\leq B=B_{1}B'. $ Moreover $ \exists^{\leq\eta} $ can be obtained by a finite disjunction of existential closure formulas. Now by an easy induction on $ n $ without loss of generality we can assume that $ A\leq B. $

Now consider that in $ \phi_{(A,B)}(\bar{x}) $ (which is in the form of the elements of (\ref{eqnPcn})) each $ \theta_{(B,C_{i})}(\bar{x}\bar{y}) $ is of the following form
\[ \diagClIExTupLParenth[B]{C}{}{\bar{x}\bar{y}}{}{z}{i}{B}\bigwedge_{j=1}^{n_{i}}\theta'_{ij}(\bar{x}\bar{y}\bar{z}_{i})\Big), \]
where $ \theta'_{ij}(\bar{x}\bar{y}\bar{z}_{i})\in P^{c}_{n-2}. $ Enumerate $ C_{i} $s by $ C_{1},\ldots,C_{k},\ldots C_{m}, $ in such a way that for each $ 1\leq i\leq k $ we have $ B\not{\leq}C_{i} $ and for each $ k< i\leq m $ we have $ B\leq C_{i}. $ Now define for each $ 1\leq i\leq k $ the following structures
\begin{align*}
& E_{i}:=cl_{C_{i}}(A), & C'_{i}:=C_{i}\backslash E_{i},\\
& B_{i}=B\cap E_{i},  & B'_{i}:=B\backslash E_{i},
\end{align*}
and note that $ E_{i}\leq E_{i}C'_{i} $ and $ B'_{i}\subseteq E_{i}C'_{i}, $ hence $ \delta(B'_{i}/B_{i})\geq 0. $ Since $ \intCl{A}{B} $ we have $ \delta(B'_{i}/B_{i})=0. $

Moreover notice that $ \freeJoin{E_{i}}{B_{i}}{B'_{i};} $ since otherwise if $ r\geq 1 $ is the number of the relations that avoid this free amalgamation, then we have
\[ \delta(B'_{i}/E_{i})=\delta(B'_{i}/B_{i})-r\leq -1<0, \]
which contradicts the fact that $ E_{i}\leq E_{i}C'_{i}. $ The fact that $ E_{i} $ is free from $ B'_{i} $ over $ B_{i}, $ implies that for any $ 1\leq i\leq k $ and any two realizations of $ B $ over $ A, $ say $ B^{1} $ and $ B^{2}, $ with $ B^{1}\cap B^{2}= B_{i}, $ a realization of $ E_{i} $ over $ B^{1} $ guarantees the existence of a realization of $ E_{i} $ over $ B^{2}. $

Also since $ \intCl[\leq]{A}{E_{i}} $ again by \Cref{factUpperBound} there is a function $ \mu $ which uniformly bounds the number of copies of $ E_{i} $ over $ A. $ Let $ \eta_{i} = \mu(\card[A],\card[E_{i}]). $ Now it is easy to check that $ \phi_{(A,B)}(\bar{x}) $ is equivalent to the following formula
\[ \existsScaled{1.5}^{\leq\eta_{1}}\bar{z}_{1}\ldots\existsScaled{1.5}^{\leq\eta_{k}}\bar{z}_{k}\Big[\bigwedge_{i=1}^{k}\diagCl[A]{E_{i}}{\bar{x}}{\bar{z}_{i}}\wedge\gamma(\bar{x}\bar{z}_{1}\cdots\bar{z}_{k})\Big], \]
where $ \gamma(\bar{x}\bar{z}_{1}\cdots\bar{z}_{k}) $ is as follows
\[ \diagClIAllTupLParenth{B}{x}{}{}{y}{}{B}\Big[\hspace*{-5pt}\bigvee_{i=k+1}^{m}\hspace*{-7pt}\theta_{i}(\bar{x}\bar{y})\vee\bigvee_{i=1}^{k}\big(``\bar{y}\cap\bar{z}_{i} =B_{i}"\wedge\diagClIExTupLParenth[E_{i}]{C'}{}{\bar{x}\bar{z}_{i}}{}{w}{i}{}\big)\Big]\Big). \]
Note that $ ``\bar{y}\cap\bar{z}_{i} =B_{i}" $ can be expressed by a finite disjunction of complete diagrams.
\end{proof}

The crucial part of the induction that is used in the proof of \Cref{thmQE} is to demonstrate for a closure formula $ \psi(\bar{x}\bar{y})\in\hrchy{} $ that formula $ \exists\bar{y}\psi(\bar{x},\bar{y}) $ also lies in $ \hrchy{}. $ The idea of the proof is to extract from the diagrams appearing in $ \exists\bar{y}\psi(\bar{x},\bar{y}), $ the maximal possible part that can be expressed by a closure formula, say $ \Phi_{\psi}(\bar{x}), $ and to show that the satisfaction of $ \Phi_{\psi}(\bar{x}) $ implies the satisfaction of $ \exists\bar{y}\psi(\bar{x},\bar{y}). $ In fact for a given $ \bar{a}\in M^{*} $ satisfying $ \Phi_{\psi}(\bar{x}), $ the genericity of $ M^{*} $ enables us to find a suitable $ \bar{b}\in M^{*} $ that satisfies $ \psi(\bar{a},\bar{b}). $

The following technical lemma is required for defining $ \Phi_{\psi}(\bar{x}). $
\begin{lma}\label{lmaNegativeCore}
Let $ E\subseteq C,D\in\classPosit{} $ with $ \intCl{E}{D}, E\leq D $ and $ \leqst{E}{C}. $ Then there exists a finite number of structures $ CG_{1},\ldots,CG_{\mu} $ and a finite number of $ \hrchy[s]{0} $-formulas $ \phi_{1}(\bar{x}\bar{v}_{1}\bar{w}),\ldots,\phi_{\mu}(\bar{x}\bar{v}_{\mu}\bar{w}) $ with $ \card[\bar{x}]=\card[E], \card[\bar{w}]=\card[D] $ and $ \card[\bar{v}_{s}]=\card[G_{s}\backslash E] $ for each $ 1\leq s\leq\mu, $ such that the following hold:

$ \bullet $ For any $ s, $ we have that $ \intCl{E}{G_{s}} $ and the formula $ \phi_{s}(\bar{x}\bar{v}_{s}\bar{w}) $ implies $ \diagCl[E]{D}{\bar{x}}{\bar{w}} $ in $ M^{*}. $

$ \bullet $ For any embedding $ \iota:C\longrightarrow M^{*} $ there exist some $ 1\leq s\leq\mu $ such that $ M^{*}\models\diagClIExTupLParenth[\iota E]{G}{}{\iota E	}{}{v}{s}{} $ (witnessed by some $ \bar{g}_{s} $) and for every embedding $ \eta:D\longrightarrow M^{*} $ with $ \eta\restriction_{E}=\iota\restriction_{E} $ we have
\[ M^{*}\models\phi_{s}(\eta E,\bar{g}_{s},\eta D) \quad\Longleftrightarrow\quad \iota C\sqcup_{\iota E}\eta D. \]
\end{lma}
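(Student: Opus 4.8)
The plan is to collect, inside the finite closures of $\iota E$ and $\iota C$ in $M^{*}$, all the configurations that can make a copy of $D$ over $\iota E$ fail to be free from $\iota C$, to argue that there are only boundedly many of them, and to encode these configurations by finitely many quantifier‑free formulas. The first step is predimension bookkeeping. Since $\intCl{E}{D}$ and $E\leq D$, \Cref{factIntrisicForPredim}(ii) gives $\delta(D)=\delta(E)$, so $\delta(D/E)=0$; hence $\iota E\leq^{*}_{i}\eta D$ for every embedding $\eta:D\to M^{*}$ extending $\iota\restriction_{E}$, and by \Cref{factClosureProperties}(ii) all such $\eta D$ lie inside the finite structure $\closure{M^{*}}{\iota E}$. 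Moreover $\notFreeJoin{\iota C}{\iota E}{\eta D}$ forces $\delta(\iota C\cup\eta D)<\delta(\iota C)$, because the relative predimension of $\eta D$ over $\iota E$ is already $0$; so any such $\eta D$ is absorbed into the $\leq_{\alpha}$‑closure $\cl_{M^{*}}(\iota C)$. Since every $\leq_{i,\alpha}$‑intrinsic extension has strictly negative relative predimension, \Cref{factUpperBound} (equivalently, AC for $\classPositEqOpr{\alpha}$) makes $\cl_{M^{*}}(\iota C)$ finite and uniformly bounded in $\card[C]$ and $\alpha$. Consequently there are only boundedly many non‑free copies of $D$ over $\iota E$, each of bounded size; writing $G$ for the union of $\iota E$ with all of them, $\intCl{\iota E}{G}$ follows from \Cref{lmaIntClUnion}(ii), and $\card[G]$ is bounded in terms of $\card[C],\card[D]$ and $\alpha$.

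Next I would build the menu. As $C$ and $D$ are fixed, there are finitely many isomorphism types over $E$ of structures $G$ that can arise in this way --- an extension $G\supseteq E$ with $\intCl{E}{G}$, $\delta(G/E)=0$, of size below the explicit bound, equipped moreover with an amalgam $CG$ recording how $G$ overlaps and relates to a copy of $C$ attached at $E$ --- and I would list the corresponding amalgams as $CG_{1},\dots,CG_{\mu}$, including the degenerate $G=E$ for the case with no non‑free copy. For each $s$ I would take $\phi_{s}(\bar{x}\bar{v}_{s}\bar{w})$ to be the disjunction of all complete diagrams on $E\cup(G_{s}\setminus E)\cup(D\setminus E)$ that extend $\diagCl[E]{D}{\bar{x}}{\bar{w}}$ and in which the variables of $\bar{w}$ outside $\bar{x}$ are disjoint from, and carry no relation to, $\bar{v}_{s}$. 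By construction $\phi_{s}\in\hrchy[s]{0}$, it implies $\diagCl[E]{D}{\bar{x}}{\bar{w}}$, and $\intCl{E}{G_{s}}$ holds.

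For the verification, fix $\iota:C\to M^{*}$. By the first paragraph, $\iota E$ together with all non‑free copies of $D$ over $\iota E$ realizes one of the $G_{s}$ inside $\closure{M^{*}}{\iota E}$; I would pick $\bar{g}_{s}$ witnessing $\diagClIExTupLParenth[\iota E]{G}{}{\iota E}{}{v}{s}{}$, chosen to enumerate exactly those copies. Then for any $\eta:D\to M^{*}$ extending $\iota\restriction_{E}$: if $\notFreeJoin{\iota C}{\iota E}{\eta D}$, then $\eta D$ must be one of these finitely many non‑free copies, so $(\iota E,\bar{g}_{s},\eta D)$ violates the freeness clause of $\phi_{s}$; conversely, if $\freeJoin{\iota C}{\iota E}{\eta D}$, then $\eta D$ is disjoint from and unrelated to $\iota C\setminus\iota E$, and --- using $\leqst{E}{C}$ to ensure that the non‑free copies recorded in $\bar{g}_{s}$ that stick out of $\iota C$ are attached to $\iota E$ exactly as $D$ is, together with the genericity of $M^{*}$ (the witness $\bar{g}_{s}$ being fixed first) --- $\eta D$ is also disjoint from and unrelated to the rest of $\bar{g}_{s}$, so $(\iota E,\bar{g}_{s},\eta D)$ satisfies $\phi_{s}$. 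This is exactly the required equivalence, and the first bullet of the statement follows from the construction of $\phi_{s}$ and $G_{s}$.

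I expect the main obstacle to be the last step: pushing freeness of $\eta D$ from $\iota C$ up to freeness of $\eta D$ from the whole witness $\bar{g}_{s}$, not merely from its part inside $\iota C$. This is precisely where the hypothesis $\leqst{E}{C}$ does real work --- it prevents the non‑$\iota C$ part of $\bar{g}_{s}$ from being ``in the way'' of a generic copy of $D$ over $\iota E$ --- and turning the heuristic above into a clean argument will require combining the full amalgamation and genericity of $M^{*}$ with the uniform bound from \Cref{factUpperBound}. A secondary point that needs care is verifying that the minimal‑pair decomposition of $\intCl{E}{D}$ (\Cref{lmaIntClUnion}(v)) interacts well with the splitting of a copy of $D$ into a part absorbed by $\cl_{M^{*}}(\iota C)$ and a part that is genuinely free, so that recording $\cl_{M^{*}}(\iota C)$ really does capture every non‑free copy.
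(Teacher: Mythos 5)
There is a genuine gap, and it sits exactly where the lemma is hard. Your central finiteness claim is false: if $\eta D$ is not free from $\iota C$ over $\iota E$, it does \emph{not} follow that $\eta D\subseteq\cl_{M^{*}}(\iota C)$. Non-freeness only forces $\delta(\eta D/\iota C)<0$, which puts a nonempty ``negative core'' of $\eta D$ (the part of $\eta D$ inside $\cl_{\iota C\eta D}(\iota C)$) into the $\leq$-closure of $\iota C$; the rest of $\eta D$ can be a piece of relative predimension $0$ over $E$ that is not $\leq_{i}$-intrinsic over $\iota C$ at all. Consequently the number of non-free copies of $D$ over $\iota E$ is \emph{not} uniformly bounded: unboundedly many copies can share one negative core and differ only in their zero-predimension part — this is precisely the non-AC phenomenon the lemma has to tame. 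For each fixed $\iota$ the set of copies is finite (they all lie in the finite $\cl^{*}_{M^{*}}(\iota E)$), but the lemma demands a single finite list $CG_{1},\ldots,CG_{\mu}$ and formulas $\phi_{1},\ldots,\phi_{\mu}$ working uniformly for all embeddings $\iota$, so your $G$ = ``union of all non-free copies'' has no uniform bound on its size and the ``finitely many isomorphism types'' step collapses. (Also, \Cref{factUpperBound} bounds the multiplicity $\chi_{M^{*}}(B/A)$ of each fixed intrinsic extension; it does not bound $\card[\cl_{M^{*}}(\iota C)]$ in terms of $\card[C]$ alone.) What \emph{is} uniformly bounded — and what the paper actually extracts from \Cref{factUpperBound} — is the number of possible negative cores: for each of the finitely many amalgams $CD_{i}$ in which a copy of $D$ is non-free, the nonempty part $D'_{i}$ of the core is a $\leq_{i}$-intrinsic extension of $C$ with strictly negative relative predimension, hence has multiplicity $<\mu_{i}$ over $\iota C$; the paper's $G_{s}$ is a union of boundedly many copies of $D$ representing these cores, paired with a family $\mathcal{H}_{s}$ of distinguished substructures.

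The second gap is the content of $\phi_{s}$. You take $\phi_{s}$ to say that $\bar{w}$ is disjoint from, and carries no relation to, $\bar{v}_{s}$, i.e.\ that $\eta D$ is free from the witness $\bar{g}_{s}$; but the lemma requires equivalence with freeness from $\iota C$, and a copy $\eta D$ that is free from $\iota C$ may perfectly well intersect or be related to the copies recorded in $\bar{g}_{s}$ (for instance, share a zero-predimension part with one of them). You flag this yourself as the ``main obstacle'' and hope that $\leqst{E}{C}$ plus genericity will make free copies avoid $\bar{g}_{s}$; that cannot work, since $\bar{g}_{s}$ is a fixed tuple of $M^{*}$ and nothing prevents further copies of $D$ over $\iota E$ in $M^{*}$ from overlapping it. The paper's formula does not assert disjointness: it asserts that the trace $\eta D\cap G_{s}$ does \emph{not} lie in the prescribed family $\mathcal{H}_{s}$, and the pairs $\langle G_{s},\mathcal{H}_{s}\rangle$ are constructed so that $\notFreeJoin{\eta D}{\iota E}{\iota C}$ holds exactly when $\cl_{\iota C\eta D}(\iota C)$ coincides with the trace of one of the representative copies inside $G_{s}$, i.e.\ when the intersection is a ``bad'' trace. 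Repairing your argument would amount to redoing exactly this: replace ``all non-free copies'' by representatives of the boundedly many negative cores, and replace ``disjoint and unrelated'' by ``intersection not of a forbidden type''.
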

\begin{proof}
First note that there are finite number of possibilities $ CD_{1},\ldots,CD_{m} $ such that $ D_{i}\cong_{E}D $ and having $ D_{i} $ not free over $ E $ with respect to $ C $  for each $ 1\leq i\leq m. $ Hence for each $ i $ we have $ \intCl{E}{D_{i}}, E\leq D_{i} $ and $ \delta(D_{i}/C)<\delta(D_{i}/E)=0. $ Let $ D'_{i}=\cl_{CD_{i}}(D_{i})\backslash C\subseteq D_{i} $ and notice that $ D'_{i}\neq\emptyset $ and $ C\leq_{i}CD'_{i}. $ Thus by \Cref{factUpperBound} we have that $\chi_{M^{*}}(CD'_{i}/C)<\mu_{i} $ for some natural number $ \mu_{i}. $ Therefore one can find a natural number $ \hat{\mu}\leq\prod_{i}\mu_{i} $ and pairs $ \langle G_{1},\mathcal{H}_{1}\rangle,\ldots,\langle G_{\hat{\mu}},\mathcal{H}_{\hat{\mu}}\rangle $ satisfying the following conditions:

$ \bullet $ For any $ 1\leq s\leq\hat{\mu} $ we have $ G_{s}\in\classPosit{}, \intCl{E}{G_{s}} $ and $ G_{s} $ is obtained by a union of copies of $ D $ over $ E $ with $ \card[G_{s}]\hspace*{2pt}\leq\Sigma_{i=1}^{m}\mu_{i}\card[D_{i}]. $ Also $ \mathcal{H}_{s} $ is a family of nonempty substructures of $ G_{s}. $

$ \bullet $ For any embedding $ \iota:C\longrightarrow M^{*} $ there exist some $ s $ and an embedding $ \bar{\iota}_{s}:G_{s}\longrightarrow M^{*} $ with $ \bar{\iota}_{s}\restriction_{E} = \iota\restriction_{E} $ such that for any copy $ \hat{D} $ of $ D $ over $ \iota E $ we have that $ \hat{D}\not\sqcup_{\iota E}\iota C $ if and only if there exists a copy $ D^{\dag}\subseteq \bar{\iota}_{s}(G_{s}) $ of $ D $ over $ \bar{\iota}_{s} E $ such that
$  \cl_{\iota C D^{\dag}}(\iota C)=\cl_{\iota C\hat{D}}(\iota C)\in\mathcal{H}_{s}.  $

Now one can see that for any embedding $ \iota:C\longrightarrow M^{*} $ one of the structures $ G_{s} $ is realized over $ \iota C, $ and for every embedding $ \eta:D\longrightarrow M^{*} $ its image $\eta D $ is realized free from $ \iota C $ over $ \iota E $ if and only if $ \eta D\cap G_{s}\notin\mathcal{H}_{s}. $ Clearly the latter property is expressible by a $ \hrchy[s]{0} $-formula which introduces the desired $ \phi_{s}(\bar{x}\bar{v}_{s}\bar{x}). $

To finish the proof, note that for any $ s $ there are finitely many complete diagrams over $ C\cup G_{s} $ extending both $ C $ and $ G_{s}. $ Therefore by assigning  the same $ \phi_{s}(\bar{x}\bar{v}_{s}\bar{w}) $ to all the possible diagrams of $ CG_{s} $ we may get a new list $ \langle CG_{1},\phi_{1}(\bar{x}\bar{v}_{1}\bar{w})\rangle,\ldots,\langle CG_{\mu},\phi_{\mu}(\bar{x}\bar{v}_{\mu}\bar{w})\rangle $ for some $ \mu\geq\hat{\mu} $ as required.

\end{proof}

\paragraph{Definition of $ \Phi_{\psi}(\bar{x}). $} For any $\psi(\bar{x},\bar{y})\in S^{c}_{n} $ we define a closure formula $ \Phi_{\psi}(\bar{x}) $ (that will be proved to be equivalent to $ \exists\bar{y}\psi(\bar{x},\bar{y}) $ in \Cref{thmQE}). The definition is given by induction on $ n. $ Suppose that $ \psi(\bar{x}\bar{y})\in S^{c}_{0}, $ hence for some $ A\subseteq B\in\classPosit{} $ it is of the form of $ \diag_{AB}(\bar{x}\bar{y}). $ Set $ E:=\cl^{*}_{B}(A) $ and let
$ \Phi_{\psi}(\bar{x}) $ be $
\diagClIExTupLParenth{E}{x}{}{}{z}{}{}. $

For $ n\geq 1 $ suppose that the innermost $ S^{c}_{3} $-formulas appearing in $ \psi(\bar{x}\bar{y}) $ are of the following form
\begin{align}\label{eqnSigma3}
\begin{split}
&\diagClIExTupLParenth[A^{*}B^{*}]{C}{}{\bar{x}^{*}\bar{y}^{*}}{}{z}{}{gg}\bigwedge_{j=1}^{m}
\diagClIAllTupLParenth[C]{D}{}{\bar{x}^{*}\bar{y}^{*}\bar{z}}{}{w}{j}{neg}\wedge\\
&\qquad\diagClIAllTupLParenth[C]{D}{}{\bar{x}^{*}\bar{y}^{*}\bar{z}}{}{w}{0}{B}\bigvee_{k=1}^{n}\diagClIExTupLParenth[D_{0}]{C}{}{\bar{x}^{*}\bar{y}^{*}\bar{z}\bar{w}_{0}}{}{z}{0k}{}\Big)\bigg),
\end{split}
\end{align}
in which $ \bar{x}\bar{y}\subset \bar{x}^{*}\bar{y}^{*}. $ Moreover by induction we may assume that $ A^{*}\leq^{*}\intCl{B^{*}}{C} $ and $ \intCl{A}{A^{*}}. $

Now by cutting all the leaves and their immediate predecessors from $ \tau_{\psi} $ we obtain a tree $ \tau' $ which corresponds to a formula $ \psi'(\bar{x}\bar{y}). $ This formula belongs to a lower level of the hierarchy and in fact we have $ \tau'=\tau_{\psi'}. $ Hence by induction hypothesis one can define a closure formula $ \Phi_{\psi'}(\bar{x}). $ We may further assume by induction that the innermost $ S^{c}_{1} $-formulas appearing in $ \Phi_{\psi'}(\bar{x}) $ are formulas of the following form
\[ \diagClIExTupLParenth[A^{*}]{E}{}{\bar{x}^{*}}{\bar{z}_{0}}{}{}{},\]
where $ E=\cl^{*}_{B^{*}C}(A^{*})\subseteq C. $ Let us denote these formulas by $ \gamma(\bar{x}^{*}). $

Now in order to define $ \Phi_{\psi}(\bar{x}) $ we replace each formula $ \gamma(\bar{x}^{*}) $ in $ \Phi_{\psi'}(\bar{x}) $ with a new formula $ \theta(\bar{x}^{*}) $ defined as follows. Let $ C':= B^{*}C\backslash E $ and for $ 1\leq j\leq m $ select those $ D_{j} $s which are \oldClLiteral{}{E}{C'} and renumerate them by $ D_{1},\ldots,D_{m'}. $ We have two different cases:

\textbf{Case 1,} \notOldClLiteral{D_{0}}{E}{C':}{neg} In this case let $ \theta(\bar{x}^{*}) $ be:
\[ \diagClIExTupLParenth[A^{*}]{E}{}{\bar{x}^{*}}{\bar{z}_{0}}{}{}{B}\bigwedge_{j=1}^{m'}\diagClIAllTupLParenth[E]{D}{}{\bar{x}^{*}\bar{z}_{0}}{}{w}{j}{neg}\Big). \]

\textbf{Case 2,} \oldClLiteral{D_{0}}{E}{C':} In this case for each $ 1\leq k\leq n $ set $ E_{0k}:= \closure{C_{0k}}{D_{0}}. $ Now obtain $ C'G_{1},\ldots,C'G_{\mu} $ by applying \Cref{lmaNegativeCore} to $ E, D_{0} $ and $ EC', $ and for any $ 1\leq s\leq\mu $ set $ F_{s}:=\cl_{C'G_{s}}^{*}(G_{s}). $ Moreover let $ \sigma_{s}(\bar{x}^{*}\bar{z}_{0}) $ be the following formula
\begin{align*}
\qquad\qquad\diagClIExTupLParenth[E]{F}{}{\bar{x}^{*}\bar{z}_{0}}{}{u}{s}{B}&\\
&\hspace*{-70pt}\forall\bar{w}_{0}\Big[\phi_{s}(\bar{x}^{*}\bar{v}_{s}\bar{w}_{0})\rightarrow\bigvee_{k=1}^{n}\diagClIExTupLParenth[D_{0}]{E}{}{\bar{x}^{*}\bar{z}_{0}\bar{w}_{0}}{}{z}{0k}{}\Big]\Big).
\end{align*}
Since $ \phi_{s}(\bar{x}^{*}\bar{v}_{s}\bar{w}_{0}) $ implies $ \diagCl[E]{D_{0}}{\bar{x}^{*}}{\bar{w}}, $ by part (iii) of \Cref{lmaHierarchyCanonicalFormSigma} the above formula is a $ \hrchy[s]{3} $-formula.
Now let $ \theta(\bar{x}^{*}) $ be the following formula
\begin{align*}
\diagClIExTupLParenth[A^{*}]{E}{}{\bar{x}^{*}}{\bar{z}_{0}}{}{}{B}\bigwedge_{j=1}^{m'}\diagClIAllTupLParenth[E]{D}{}{\bar{x}^{*}\bar{z}_{0}}{}{w}{j}{neg}\wedge
\bigvee_{s=1}^{\mu}\sigma_{s}(\bar{x}^{*}\bar{z}_{0})\Big).
\end{align*}
\begin{thm}\label{thmQE}
$ T^{*}=\theory(M^{*}) $ admits quantifier elimination down to closure formulas. More precisely, each $\mathcal{L}$-formula $ \frmI{x}{} $ is equivalent to a closure formula in $ M^{*} $.
\end{thm}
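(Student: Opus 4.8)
\emph{Proof strategy.} The plan is to argue by induction on the complexity of the $\mathcal{L}$-formula $\frmI{x}{}$. For atomic $\frmI{x}{}$, after the conventions on dummy variables and variable identifications the formula is the diagram of a one- or three-element structure, hence already lies in $\hrchy[h]{0}$. For conjunctions and disjunctions one uses that $\hrchy[h]{}$ is closed under $\wedge$ and $\vee$: each $\hrchy[h]{n}$ is closed under positive Boolean operations by definition, $\hrchy{n}$ and $\hrchy[p]{n}$ both sit inside $\hrchy[h]{n}$, and $\hrchy[h]{n}\subseteq\hrchyLong[h]{n+1}$ by \Cref{rmrkBasicFactsAboutClosureFormula}(i), so formulas of different levels may be combined. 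Closure under negation rests on the duality $\neg S^{c}_{n}\equiv P^{c}_{n}$ and $\neg P^{c}_{n}\equiv S^{c}_{n}$ modulo $T^{*}$, which is proved by induction on $n$: negating a formula of the form (\ref{eqnScn}) turns the bounded existential over a diagram into a bounded universal over the same diagram, and by De Morgan together with $\neg P^{c}_{n-1}\equiv S^{c}_{n-1}$ turns the inner conjunction of $P^{c}_{n-1}$-formulas into a disjunction of $S^{c}_{n-1}$-formulas, which is precisely the shape (\ref{eqnPcn}); the base cases $n=0,1$ are \Cref{rmrkBasicFactsAboutClosureFormula}. Hence the negation of a positive Boolean combination of $\hrchy{n}$- and $\hrchy[p]{n}$-formulas is again one, and the interesting case is the existential quantifier.

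So suppose $\psi(\bar{x}\bar{y})$ is a closure formula and we must show $\exists\bar{y}\,\psi(\bar{x},\bar{y})$ is one. By \Cref{lmaPrimaryFormulas} we may write $\psi$ as a disjunction of primary $S^{c}$-formulas, and since $\exists$ distributes over $\vee$ we reduce to the case that $\psi(\bar{x}\bar{y})$ is a single primary $S^{c}_{n}$-formula. The claim to prove is then that $\exists\bar{y}\,\psi(\bar{x},\bar{y})$ is equivalent in $M^{*}$ to the formula $\Phi_{\psi}(\bar{x})$ constructed above, and this is done by induction on $n$. One direction is easy: if $M^{*}\models\psi(\bar{a},\bar{b})$, then restricting the diagrams occurring in $\psi$ to their $\cl^{*}$-parts produces a witness for the initial existential block of $\Phi_{\psi}$; its surviving universal conjuncts hold because they forbid only structures already forbidden by $\psi$; and in Case 2 the subformulas $\sigma_{s}(\bar{x}^{*}\bar{z}_{0})$ hold by the very construction of $G_{s}$, $F_{s}$ and $\phi_{s}$ from \Cref{lmaNegativeCore}.

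The converse direction is the core of the proof. Assume $M^{*}\models\Phi_{\psi}(\bar{a})$ and let $E'$ be the copy of $E$ over $\bar{a}$ witnessing the initial existential block. Since $M^{*}$ is generic, $\closure{M^{*}}{E'}$ is finite by \Cref{factClosureProperties}(ii); writing $\hat{E}$ for it, $\hat{E}$ is $\leq^{*}$-strong in $M^{*}$. Because $\psi$ is primary and $ A^{*}\leq^{*}\intCl{B^{*}}{C} $ by the standing assumption, $E$ is $\leq^{*}$-strong in the remaining structure to be realized; hence $\hat{E}$ is $\leq^{*}$-strong in the free join of $\hat{E}$ with a copy of that structure over $E'$, so by genericity (\Cref{dfnGenericModel}(2)) $M^{*}$ contains a copy of that free join over $\hat{E}$ which is again $\leq^{*}$-strong in $M^{*}$, and inside this copy one reads off a witness $\bar{b}$ for the positive part of $\psi(\bar{a},\cdot)$. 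What remains is to verify that the universal conjuncts and nested universal subformulas of $\psi$ survive this amalgamation, i.e.\ that no forbidden copy has been created. This is exactly what \Cref{lmaNegativeCore} is for: it bounds, via finitely many structures $G_{s}$ and $\hrchy[s]{0}$-formulas $\phi_{s}$ detecting freeness, the copies of the forbidden structures that a generic realization of $C$ over $E'$ can produce, and the split into Case 1 and Case 2 in the definition of $\theta$ records precisely when the universal requirements can be satisfied — the point being that structures old over $E$ with respect to $C'$ already lie inside $\closure{M^{*}}{E'}$, so forbidding them relative to $E$ is enough, while for the old copy $D_{0}$ appearing in a lower-level subformula the data $\phi_{s}$, $G_{s}$, $F_{s}$ tracks whether a suitable old copy is available.

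I expect this last point to be the main obstacle: showing that the generic amalgamation realizing the positive part of $\psi$ does not spuriously satisfy any forbidden diagram, so that every $\Pi^{c}$-type conjunct of $\psi$ is preserved. This is what forces the detour through \Cref{lmaNegativeCore} and the Case 1 / Case 2 dichotomy, and it is also what makes the hierarchy bookkeeping delicate, since an innermost $S^{c}_{1}$-subformula $\gamma(\bar{x}^{*})$ of $\Phi_{\psi'}$ gets replaced by the formula $\theta(\bar{x}^{*})$, whose subformulas $\sigma_{s}$ are $\hrchy[s]{3}$-formulas by \Cref{lmaHierarchyCanonicalFormSigma}(iii); one must check that the levels stay bounded and that $\Phi_{\psi}$ is genuinely a closure formula, which closes the induction on $n$ and with it the outer induction on $\frmI{x}{}$.
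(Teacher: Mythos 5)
Your overall architecture matches the paper's: induction on formula complexity, reduction via \Cref{lmaPrimaryFormulas} to a single primary $S^{c}_{n}$-formula $\psi$, and then proving $M^{*}\models\forall\bar{x}\,(\Phi_{\psi}(\bar{x})\leftrightarrow\exists\bar{y}\,\psi(\bar{x}\bar{y}))$ by induction on $n$, with the easy direction and the Boolean/negation steps handled as you indicate. The gap is in the converse direction, and it is exactly the point you flag at the end but do not resolve. The structure that must be realized over $\cl^{*}(\bar{a})$ is \emph{not} ``the free join of $\cl^{*}(\bar{a})$ with a copy of $C'$ over $E$'': the nested $\forall\exists$ conjuncts of the innermost subformulas (\ref{eqnSigma3}) demand that over \emph{every} copy of $D_{0}$ over the newly realized $C$ --- in particular over every old copy $D^{t}_{i0}$ already sitting inside $\cl^{*}(\bar{a})$ --- some $C_{0k}$ be realized. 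Such a witness is in general \emph{not} already available in $M^{*}$: what $\Phi_{\psi}$ (through the $\sigma_{s}$ and \Cref{lmaNegativeCore}) guarantees is only the presence of the closed part $E_{0k}=\cl^{*}_{C_{0k}}(D_{0})$ inside $\cl^{*}(\bar{a})$; the remainder $C'_{0k}$ consists of new points that must be adjoined, with relations going into $\cl^{*}(\bar{a})$, so the resulting configuration is not free over $E$. Your sentence that the data $\phi_{s},G_{s},F_{s}$ ``tracks whether a suitable old copy is available'' misreads this: the witnesses have to be built, not found.

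Consequently the heart of the paper's argument is missing from your proposal: one constructs explicitly a structure $\mathbb{C}_{\bar{a},\psi}$ consisting of a copy of each $C'_{i}$ together with, for each of the finitely many actual realizations $D^{t}_{i0}$ of $D_{i0}$ over $E_{i}$ in $M^{*}$ (finitely many because closures in the generic are finite), a fresh copy $H^{t}_{i0k}$ of $C'_{i0k}$ attached over $E^{t}_{i0k}\subseteq\cl^{*}(\bar{a})$, all glued freely where possible; and one must then \emph{prove} that $\cl^{*}(\bar{a})\leq^{*}\mathbb{C}_{\bar{a},\psi}$ by the explicit predimension computation culminating in (\ref{eqnLastStep}). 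That computation is where primarity is genuinely used ($D_{i0}\leq C_{i0k}$ gives $\delta(K^{t}_{i}/N_{i}E^{t}_{i0k})\geq 0$, while freeness of $C'_{i}$ from $E_{i0k}$ over $D_{i0}$ gives the strictly positive contribution of the $N_{i}$'s), whereas you invoke primarity only for the positive part (``$E$ is $\leq^{*}$-strong in the remaining structure''), which by itself does not make the enlarged structure strong over $\cl^{*}(\bar{a})$. Only after this claim is established can genericity be applied, and then the old/new dichotomy (the $D_{ij}$ new over $E_{i}$ with respect to $C'_{i}$ cannot be realized by an old embedding) finishes the verification of the universal conjuncts. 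Without the construction of $\mathbb{C}_{\bar{a},\psi}$ and the strongness computation, the inductive step --- and hence the theorem --- is not proved.
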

\begin{proof}

We proceed by induction on the complexity of $ \frmI{x}{} $. The induction base for atomics as well as the induction steps for negation, conjunction and disjunction are obvious. Therefore based on \Cref{lmaPrimaryFormulas} we may suppose that $ \frmI{x}{} $ is of the form of $ \frmIExTup{x}{}{y}{} $ where $ \frmDblI{x}{}{y}{}$ is a primary $ S^{c}_{n} $-formula of the form of \Cref{eqnSigmaWLG}.

Now by induction on $ n $ we show that
\begin{equation}\label{eqnMainPartOfTheorem} M^{*}\models\forall\bar{x}(\Phi_{\psi}(\bar{x})\leftrightarrow\exists\bar{y}\frmDblI{x}{}{y}{}).
\end{equation}

To see that $ \exists\bar{y}\frmDblI{x}{}{y}{} $ implies $ \Phi_{\psi}(\bar{x}), $ using the notations employed in defining $ \Phi_{\psi}(\bar{x}), $ note that for any $ \bar{a},\bar{b}\in M^{*} $ we have that $ M\models\psi(\bar{a}\bar{b})\rightarrow\psi'(\bar{a}\bar{b}). $ Also by induction hypothesis we know that $ \psi'(\bar{a}\bar{b}) $ implies $ \Phi_{\psi'}(\bar{a}). $ But since each innermost $ \hrchy[s]{3} $-formula appearing in $ \psi(\bar{x}\bar{y}) $ logically implies the corresponding introduced $ \theta(\bar{x}^{*}) $, we have that $ M^{*}\models\Phi_{\psi}(\bar{a}). $

For the other direction, assume for some $ \bar{a}\in M^{*} $ that $ M^{*}\models\Phi_{\psi}(\bar{a}). $
To ease the notation we omit subscript $ M^{*} $ from $ \cl^{*}_{M^{*}}(\bar{a}) $ and write $ \cl^{*}(\bar{a}) $ instead.
\subparagraph{Claim.}\hspace{-3pt}There is a structure $ \mathbb{C}_{\bar{a},\psi}\in\classPosit{} $ satisfying the following properties:

$\bullet\hspace*{4pt} \cl^{*}(\bar{a})\leq^{*}\mathbb{C}_{\bar{a},\psi}. $

$ \bullet $ For any strong embedding of $ \mathbb{C}_{\bar{a},\psi} $ into $ M^{*} $ over $ \cl^{*}(\bar{a}), $ there exist some $ \bar{b}\in\mathbb{C}_{\bar{a},\psi} $ with $ M^{*}\models\psi(\bar{a}\bar{b}). $

\begin{proofAV} We define the structure $ \mathbb{C}_{\bar{a},\psi} $ by induction on $ n $. For $ n=0 $ let $ \mathbb{C}_{\bar{a},\psi} $ be the structure $ B\backslash \cl^{*}(\bar{a}), $ and for $ n=1 $ define it to be $ C\backslash \cl^{*}(\bar{a}). $

For $ n\geq 1 $ following the notations used in the definition of $ \Phi_{\psi}(\bar{x}), $ we obtain the formula $ \psi'(\bar{x}\bar{y}) $ and a tree $ \tau'=\tau_{\psi'}. $ Now let $ C_{1},\ldots,C_{\lambda} $ be an enumeration of the diagrams appearing as the leaves of $ \tau_{\psi'}. $ By induction hypothesis we know that $ \mathbb{C}_{\bar{a},\psi'} $ is already defined.

Note that  corresponding to each $ C_{i} $ there is a $ \hrchy[s]{3} $-formula $ \psi_{(A_{i}^{*}B_{i}^{*},C_{i})}(\bar{x}^{*}\bar{y}^{*}) $ in the form of \Cref{eqnSigma3}. To avoid any ambiguity we attach an $ ``i" $ at the beginning of all the subscripts used in the definition of $ \Phi_{\psi}(\bar{x}). $

Now for each $ 1\leq i\leq\lambda $ enumerate by $ D^{1}_{i0},\ldots,D^{\nu_{i}}_{i0} $ the actual realizations of $ D_{i0} $ over $ E_{i} $ in $ M^{*}. $ Moreover suppose for each $ 1\leq t\leq\nu_{i} $ that $ E^{t}_{i0k} $ is one of the structures $ E_{i0k} $ which is forced by $ \Phi_{\psi}(\bar{a}) $ to be realized over $ D^{t}_{i0}. $ Let $ F_{i} $ be the structure $ E_{i}\cup\bigcup_{t=1}^{\nu_{i}}E^{t}_{i0k} $ forced by $ \Phi_{\psi}(\bar{a}) $ to be realized over $ \bar{a} $ as a subset of $ \cl^{*}(\bar{a}), $ and consider $ H_{i} $ to be a structure isomorphic to $ C'_{i}. $ Then for each $ 1\leq t\leq\nu_{i} $ let $ H^{t}_{i0k} $ be a structure which is isomorphic to $ C'_{i0k}, $ and define $ G_{i} $ to be the following structure
\[ H_{i}\cup\bigcup_{t=1}^{\nu_{i}}H^{t}_{i0k}, \]
in which $ H^{t}_{i0k} $s are mutually freely joined over $ H_{i} $ and for each $ t $ we have $ H^{t}_{i0k}E^{t}_{i0k}\cong C'_{i0k}\cup E_{i0k} $ and
\[ R(G_{i},\cl^{*}(\bar{a}))=\bigcup_{t=1}^{\nu_{i}}R(H_{i}H^{t}_{i0k},E^{t}_{i0k}). \]

Denote by $ G $ the structure whose universe is
$ G:=\bigcup_{i=1}^{\lambda}G_{i} $ in which all $ G_{i} $s are mutually freely joined over $ \cl^{*}(\bar{a}) $ having that
\[ R(G,\cl^{*}(\bar{a}))=\bigcup_{i=1}^{\lambda}R(G_{i},F_{i}). \]

Now we show that $ \leqst{\cl^{*}(\bar{a})}{G}. $ For each $ i $ and $ t $ suppose that $ N_{i} $ and $ K^{t}_{i} $ be arbitrary subsets of $ H_{i} $ and $ H^{t}_{i0k} $ respectively, and note that according to the way $ G $ is defined we have the following equations
\begin{align*}
\delta\Big(\bigcup_{i=1}^{\lambda}(N_{i}\cup\bigcup_{t=1}^{\nu_{i}}K^{t}_{i})/\cl^{*}(\bar{a})\Big) &= \sum_{i=i}^{\lambda}\delta(N_{i}\cup\bigcup_{t=1}^{\nu_{i}}K^{t}_{i})/F)\\
&= \sum_{i=i}^{\lambda}\delta(N_{i}\cup\bigcup_{t=1}^{\nu_{i}}K^{t}_{i})/F_{i}),
\end{align*}
but for each $ i $ we have
\[ \delta(N_{i}\cup\bigcup_{t=1}^{\nu_{i}}K^{t}_{i})/F_{i})=\delta(\bigcup_{t=1}^{\nu_{i}}K^{t}_{i}/N_{i}F_{i})+\delta(N_{i}/F_{i}). \]
Moreover
\[ \delta(N_{i}/F_{i})=\sum_{t=1}^{\nu_{i}}\delta(N_{i}/E^{t}_{i0k}), \]
and because $ \freeJoin{C'_{i}}{E_{i}}{D_{i0}} $ and $ \freeJoin{C'_{i}}{D_{i0}}{E_{i0k}}, $ we have $ \delta(N_{i}/E^{t}_{i0k})=\delta(N_{i})>0. $

Now notice that
\[ \delta(\bigcup_{l=1}^{\nu_{i}}K^{t}_{i}/N_{i}F_{i})=\sum_{l=1}^{\nu_{i}}\delta(K^{t}_{i}/N_{i}F_{i})= \sum_{l=1}^{\nu_{i}}\delta(K^{t}_{i}/N_{i}F_{i_{l}}),\]
and since $ \psi(\bar{x}\bar{y}) $ is considered to be primary, for each $ i $ and $ k $ we have $ D_{i0}\leq C_{i0k}, $ hence for each $ t $ we have $ \delta(K^{t}_{i}/N_{i}E^{t}_{i0k})\geq 0. $ Therefore we have
\begin{equation}\label{eqnLastStep}
 \delta\Big(\bigcup_{i=1}^{\lambda}(N_{i}\cup\bigcup_{l=1}^{\nu_{i}}K^{t}_{i})/F\Big)>0,
\end{equation}
and hence $ \cl^{*}(\bar{a})\leq^{*} G. $ Now set $ \mathbb{C}_{\bar{a},\psi}:= \mathbb{C}_{\bar{a},\psi'}\cup G. $
\end{proofAV}
Finally since $ M^{*} $ is generic, $ \mathbb{C}_{\bar{a},\psi} $ can be strongly embedded over $ \cl^{*}(\bar{a}). $ The existence of a tuple $ \bar{b}\subseteq\mathbb{C}_{\bar{a},\psi} $ with the property that $ M^{*}\models\psi(\bar{a}\bar{b}) $ is obvious by noticing that in the definition of $ \Phi_{\psi}(\bar{x}), $ for each $ i $ we only take into account those $ D_{i0} $s that are \oldClLiteral{}{E_{i}}{C'_{i};} and since we embed $ \mathbb{C}_{\bar{a},\psi} $ over $ \cl^{*}(\bar{a}) $ in an old way, the $ D_{i0} $s that are not \oldClLiteral{}{E_{i}}{C'_{i}} can not be satisfied at all. The situation is similar for those $ 1\leq j\leq m_{i} $ for which we have $ \notOldCl{D_{ij}}{E_{i}}{C'_{i}}. $

\end{proof}

An important difference between $ M^{*} $ and an arbitrary semigeneric structure $M^{\prime}$ is that while the closure of any finite tuple is finite in the former, it can be infinite in the latter. As one can see in the proof of \Cref{thmQE}, we strongly made use of this property inside $ M^{*}. $ Hence a further clarification of the distinctions between $ \theory(M^{*}) $ and $ T^{*}_{\sem}, $ can be obtained by answering the following question.
\begin{qst}
Does $ T^{*}_{\sem} $ admit a quantifier elimination down to closure formulas?
\end{qst}

The next corollary shows that the type of a finite tuple is completely determined by its closure. This corollary also generalizes Corollary 2.4 of \cite{Wagner-Relational} and Lemma 1.30 of \cite{Baldwin&Shelah-Randomness} to the context of $ \classPositOpr{}. $
\begin{crl}\label{crlType}
Suppose that $ M $ is a saturated model of $ T^{*}. $ Let $ \bar{a},\bar{a}'\in M $ both have a same diagram $ A. $ Then we have:
\[ \closure{M}{\bar{a}}\cong_{A}\closure{M}{\bar{a}'}\quad\iff\quad \type^{M}(\bar{a})=\type^{M}(\bar{a}'). \]
\end{crl}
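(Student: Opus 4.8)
The plan is to derive \Cref{crlType} as a direct consequence of the quantifier elimination established in \Cref{thmQE}, together with the observation that closure formulas are invariant under closure-isomorphisms.

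\medskip

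\noindent\textbf{The approach.} The $(\Leftarrow)$ direction should be essentially formal: if $\type^{M}(\bar{a})=\type^{M}(\bar{a}')$, then the $\mathcal{L}$-formulas (with parameters) that describe isomorphism types of finite substructures containing $\bar{a}$ hold of $\bar{a}$ iff they hold of $\bar{a}'$; combining this with \Cref{factB}\ref{factClosureProperties} and saturation of $M$ one builds a back-and-forth system between $\closure{M}{\bar{a}}$ and $\closure{M}{\bar{a}'}$ fixing the distinguished tuples, which yields an isomorphism over $A$. (This direction does not even need \Cref{thmQE}; it is the soft half, already implicit in the cited results of \cite{Wagner-Relational} and \cite{Baldwin&Shelah-Randomness}.) The real content is the $(\Rightarrow)$ direction.

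\medskip

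\noindent\textbf{Key steps for $(\Rightarrow)$.} First I would fix an isomorphism $f\colon\closure{M}{\bar{a}}\to\closure{M}{\bar{a}'}$ with $f(\bar{a})=\bar{a}'$. By \Cref{thmQE} it suffices to show that every closure formula $\phi(\bar{x})$ satisfies $M\models\phi(\bar{a})\iff M\models\phi(\bar{a}')$, and since $\hrchy[h]{}$ is generated under positive Boolean operations from $S^{c}\cup P^{c}$, it is enough to treat basic closure formulas, and by \Cref{lmaPrimaryFormulas} one may even restrict to primary $S^{c}$-formulas. The second step is to prove, by induction on the level $n$ of the hierarchy, the \emph{locality} statement: whether $M\models\psi(\bar{a})$ for $\psi\in S^{c}_{n}\cup P^{c}_{n}$ depends only on the isomorphism type of $\closure{M}{\bar{a}}$ over $\bar{a}$ — more precisely, that $f$ carries a witnessing configuration for $\psi(\bar{a})$ to a witnessing configuration for $\psi(\bar{a}')$ and vice versa. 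For the base case ($S^{c}_{0}, P^{c}_{0}$) this is immediate since diagrams of finite substructures of the closure are preserved by $f$. For the inductive step of an $S^{c}_{n}$-formula $\exists\bar{y}\,(\diag_{(A,B)}(\bar{x},\bar{y})\wedge\bigwedge_i\theta_i)$, the outermost existential quantifier ranges over an intrinsic extension of $\bar{x}$ (by \Cref{dfnClosureFormula} and the construction of $S^{c}_{n}$), hence any witness $\bar{b}$ lies inside $\closure{M}{\bar{a}}$; one applies $f$ to $\bar{b}$, notes $\diag_{(A,B)}$ is preserved, and invokes the induction hypothesis on the $\theta_i\in P^{c}_{n-1}$ applied to $\bar{a}\bar{b}$, using that $\closure{M}{\bar{a}\bar{b}}=\closure{M}{\bar{a}}$ by \Cref{lmaIntClUnion}\ref{lmaIntClUnion} and that $f$ restricts to an isomorphism of this larger closure. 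The universal (dually $P^{c}_{n}$) case is symmetric, again because the quantifier ranges only over intrinsic extensions, i.e.\ over tuples inside the finite set $\closure{M}{\bar{a}}$.

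\medskip

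\noindent\textbf{Main obstacle.} The delicate point is making precise that \emph{all} quantifiers occurring in a closure formula — including the nested universal quantifiers $\forall\bar{w}_j$ inside the $P^{c}$-subformulas, which are guarded by $\diag_{(B,D)}$-clauses — effectively range over the (finite, by \Cref{factB}\ref{factClosureProperties}) closure of the free tuple, so that the truth value is genuinely computed within $\closure{M}{\bar{a}}$ and is therefore transported by $f$. One must check that each layer of guarding, $\diag_{(\text{prev},\text{next})}$, forces the newly quantified elements into an intrinsic extension of the elements already named, and hence into $\closure{M}{\bar{a}}$; this is exactly the design of the hierarchy $\hrchy[h]{}$, and it is what distinguishes closure formulas from arbitrary $\mathcal{L}$-formulas. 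Once that bookkeeping is set up cleanly, the induction runs smoothly and, combined with \Cref{thmQE}, gives $\type^{M}(\bar{a})=\type^{M}(\bar{a}')$.
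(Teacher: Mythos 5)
Your proposal is correct and takes essentially the same route as the paper: the paper likewise dismisses the direction from equal types to isomorphic closures as obvious, and for the converse simply observes that a closure-isomorphism over $A$ forces $\bar{a}$ and $\bar{a}'$ to agree on every $\hrchy[h]{n}$-formula and then invokes \Cref{thmQE}; your induction on the hierarchy just makes explicit the locality the paper takes for granted. One small caveat: in a saturated $M\models T^{*}$ the closure of a finite tuple need not be finite (\Cref{factClosureProperties} only guarantees this in the generic itself), but this does not damage your argument, since the quantifiers in a closure formula range over finite intrinsic extensions lying inside the closure and the fixed isomorphism of closures transports these witnesses whether or not the closure is finite.
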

\begin{proof}
We prove that having isomorphic closures implies equality of types, and the other direction is rather obvious. So note that if $ \closure{M}{\bar{a}}\cong_{A}\closure{M}{\bar{a}'}, $ then for any $ n\in\omega $ and any $ \phi(\bar{x})\in\hrchy[h]{n}, $ we have that $ M\models\phi(\bar{a})\leftrightarrow\phi(\bar{a}'). $ Hence by \Cref{thmQE} we have $ \type^{M}(\bar{a})=\type^{M}(\bar{a}'). $
\end{proof}

In the rest of this section we show that \Cref{thmQE} is true for a certain ultraproduct of models each of which is elementarily equivalent to a generic structure. Using this result we prove that the theory of this ultraproduct is the same as $ T^{*}.$ In fact it is proved in \cite{Brody&Laskowski-OnRationalLimits} that if $ \{\alpha_{n}\}_{n\in\omega}\subseteq (0,1] $ is an strictly decreasing sequence of real numbers converging to a rational $ \alpha\in(0,1], $ and for each $ n\in\omega $ we consider a structure $ N_{\alpha_{n}} $ elementarily equivalent either to $ M_{\alpha_{n}} $ or to $ M^{*}_{\alpha_{n}}, $ and $ \mathcal{U} $ is a non-principal ultrafilter on $ \omega, $ then the ultraproduct $ \prod_{\mathcal{U}}N_{\alpha_{n}} $ is a model of $ T^{*}_{\alpha\mhyphen \sem}. $ We show that \Cref{thmQE} holds in $  \prod_{\mathcal{U}}N_{\alpha_{n}} $ using which we prove that $ \prod_{\mathcal{U}}N_{\alpha_{n}} $ is elementarily equivalent to $ M^{*}_{\alpha}. $

In \Cref{factUpperBound}, for each $ \alpha $ we introduced an upper bound on $ \mu_{\alpha}, $ say $ \eta_{\alpha}. $ Accordingly, it is easily seen that for every $ \alpha<\beta\in(0,1] $ we have $ \eta_{\alpha}>\eta_{\beta}. $ Hence in particular for a strictly decreasing sequence $ \{\alpha_{n}|n\in\omega\} $ converging to some $ \alpha\in(0,1]\cap\mathbb{Q}, $ if $ A\leq^{*}_{i,\alpha}B\in\classPosit{\alpha} $ and $ \delta_{\alpha}(B/A)=0, $ then there exists a natural number $ n_{0} $ such that for every $ n\geq n_{0} $ we have $ A\leq_{i,\alpha_{n}}B\in\classPositEq{\alpha_{n}} $ and $ \delta_{\alpha_{n}}(B/A)<0. $ Moreover for any $ n\geq n_{0} $ we can observe that $ \eta_{\alpha_{n+1}}>\eta_{\alpha_{n}}. $ This phenomena seems natural considering the fact that $ \eta_{\alpha} $ is actually infinity.

Note that the notions of intrinsic extension and strong embedding change from $ \classPosit{\alpha} $ to $ \classPosit{\alpha_{n}} $s. Therefore we denote by  $ \hrchySeq[h]{}{\alpha_{n}}, $  the set of closure formulas defined for $ \classPosit{\alpha_{n}} $ and $ M^{*}_{\alpha_{n}}. $ However we have the following lemma.

\begin{lmaB}\label{lmaSeqProp}
(i) For any structure $ A\in\classPosit{\alpha} $ there exists a natural number $ n_{A} $ such that for any $ n\geq n_{A} $ we have $ A\in\classPosit{\alpha_{n}}\subseteq\classPositEq{\alpha_{n}}. $\\
(ii) For any structures $ A\leq^{*}_{\alpha} B\in\classPosit{\alpha} $ there exists a natural number $ n_{A,B} $ such that for any $ n\geq n_{A,B} $ we have $ A\leq^{*}_{\alpha_{n}} B, $ and hence $ A\leq_{\alpha_{n}} B. $\\
(iii) For any structures $ A\leq^{*}_{i,\alpha} B\in\classPosit{\alpha} $ there exists a natural number $ n_{(A,B)} $ such that for any $ n\geq n_{(A,B)} $ we have $ A\leq_{i,\alpha_{n}} B, $ and hence $ A\leq^{*}_{i,\alpha_{n}} B. $\qed
\end{lmaB}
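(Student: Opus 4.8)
The plan is to reduce all three items to elementary statements about the predimension $\delta_{\beta}(X)=\card[X]-\beta\card[R^{X}]$ as $\beta$ ranges over the sequence $\{\alpha_{n}\}$, using the characterizations in \Cref{dfnSmoothClassExamples} (for membership in $\classPosit{\beta}$ and for $A\leq^{*}_{\beta}B$) and in \Cref{factIntrisicForPredim} (for $A\leq_{i,\beta}B$ and $A\leq^{*}_{i,\beta}B$). Two soft observations drive everything: for a fixed finite structure $X$ the map $\beta\mapsto\delta_{\beta}(X)$ is affine, non-increasing and in particular continuous; and $\{\alpha_{n}\}$ simultaneously satisfies $\alpha_{n}>\alpha$ and $\alpha_{n}\to\alpha$. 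Since $A$ and $B$ are finite they have only finitely many substructures, so each item involves only finitely many $\delta$-inequalities, and a single threshold obtained as the maximum of finitely many indices will suffice.

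\textbf{Items (i) and (ii).} These are the ``open'' conditions, which survive a sufficiently small upward perturbation of $\alpha$. For (i), $A\in\classPosit{\alpha}$ unwinds to $\delta_{\alpha}(C)>0$ for every nonempty $C\subseteq A$; for each such $C$, continuity together with $\alpha_{n}\to\alpha$ gives an index $n_{C}$ with $\delta_{\alpha_{n}}(C)>0$ for all $n\geq n_{C}$, and I take $n_{A}$ to be the maximum of these. The inclusion $\classPosit{\alpha_{n}}\subseteq\classPositEq{\alpha_{n}}$ costs nothing, as $\delta_{\alpha_{n}}(C)>0$ in particular gives $\delta_{\alpha_{n}}(C)\geq 0$. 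For (ii), $A\leq^{*}_{\alpha}B$ unwinds to $\delta_{\alpha}(C)-\delta_{\alpha}(A)>0$ for every $C$ with $A\subsetneq C\subseteq B$; the same continuity/finiteness argument produces $n_{A,B}$, which I enlarge if necessary so that also $n_{A,B}\geq n_{B}$, ensuring that $A$ and $B$ actually lie in $\classPosit{\alpha_{n}}$ by part (i). Finally $A\leq^{*}_{\alpha_{n}}B$ yields $A\leq_{\alpha_{n}}B$ at once, since a strict inequality is in particular non-strict.

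\textbf{Item (iii), and the point to watch.} Here the perturbation works in our favour. Fix $C$ with $A\subseteq C\subsetneq B$. By \Cref{factIntrisicForPredim}(ii), the hypothesis $A\leq^{*}_{i,\alpha}B$ gives $\delta_{\alpha}(B)-\delta_{\alpha}(C)\leq 0$; writing $r:=\card[R^{B}]-\card[R^{C}]$ for the number of relations of $B$ not contained in $C$, this reads $\card[B\backslash C]-\alpha r\leq 0$. The key observation---and the only spot where a bit of care is needed---is that $C\subsetneq B$ forces $\card[B\backslash C]\geq 1$, so $\card[B\backslash C]-\alpha r\leq 0$ is possible only if $r\geq 1$. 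Hence for \emph{every} $n$ we get
\[ \delta_{\alpha_{n}}(B)-\delta_{\alpha_{n}}(C)=\big(\card[B\backslash C]-\alpha r\big)-(\alpha_{n}-\alpha)\,r<\card[B\backslash C]-\alpha r\leq 0, \]
so $\delta_{\alpha_{n}}(B)<\delta_{\alpha_{n}}(C)$. As $C$ was arbitrary, $A\leq_{i,\alpha_{n}}B$ by \Cref{factIntrisicForPredim}(i)---all of these relations make sense once $A,B\in\classPositEq{\alpha_{n}}$, which by part (i) holds as soon as $n\geq n_{B}$, so I put $n_{(A,B)}:=n_{B}$---and then $A\leq^{*}_{i,\alpha_{n}}B$ follows trivially from the strict inequality just established. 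I do not expect any genuine obstacle here: the entire content is the asymmetry just exhibited---``$>0$''-type conditions (items (i) and (ii)) can be broken by raising $\alpha$ and therefore need a threshold, whereas an intrinsic-extension witness is only reinforced, because it necessarily carries at least one relation.
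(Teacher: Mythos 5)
Your proof is correct: the paper states this lemma without proof (it is marked as routine), and your argument — unwinding Definition \ref{dfnSmoothClassExamples} and Fact \ref{factIntrisicForPredim} into finitely many affine inequalities in $\beta\mapsto\delta_{\beta}(X)$, handled by continuity and $\alpha_{n}\downarrow\alpha$ — is exactly the intended justification. Your extra observation in (iii), that $C\subsetneq B$ together with $\delta_{\alpha}(B)\leq\delta_{\alpha}(C)$ forces at least one new relation so the strict inequality $\delta_{\alpha_{n}}(B)<\delta_{\alpha_{n}}(C)$ holds for every $n$ with $\alpha_{n}>\alpha$ (only class membership needing a threshold), is a correct and slightly sharper reading of why the threshold in (iii) is harmless.
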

\begin{crl}\label{crlSeqClosure}
For any structures $ A\subseteq B\in\classPosit{\alpha}, $ there is a natural number $ m $ such that for any $ n\geq m $ we have $ \alpha\mhyphen\cl^{*}_{B}(A)= \alpha_{n}\mhyphen\cl^{*}_{B}(A). $\qed
\end{crl}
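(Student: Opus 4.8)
The plan is to reduce the claim to a comparison of finitely many predimension inequalities, using that for a finite structure $C$ the map $\gamma\mapsto\delta_{\gamma}(C)=\card[C]-\gamma\card[R^{C}]$ is affine in $\gamma$. By \Cref{dfnClosure}, read with ambient structure $B$, for any $\gamma\in(0,1]$ the closure $\gamma\mhyphen\cl^{*}_{B}(A)$ is the union of the finite set $\mathcal{D}_{\gamma}:=\{D\mid A\subseteq D\subseteq B,\ A\leq^{*}_{i,\gamma}D\}$ (and, by \Cref{lmaIntClUnion}(ii), it is itself the $\subseteq$-maximum of $\mathcal{D}_{\gamma}$). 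So it is enough to produce $m$ with $\mathcal{D}_{\alpha}=\mathcal{D}_{\alpha_{n}}$ for every $n\geq m$; one inclusion of closures already follows from \Cref{lmaSeqProp}(iii), but the argument below handles both at once.

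By \Cref{factIntrisicForPredim}(ii), for $A\subseteq D\subseteq B$ we have $A\leq^{*}_{i,\gamma}D$ if and only if $\delta_{\gamma}(D)\leq\delta_{\gamma}(C)$ for every $C$ with $A\subseteq C\subsetneq D$. As $B$ is finite there are only finitely many such pairs $(C,D)$, so it suffices to find, for each fixed pair, a threshold past which $\delta_{\alpha_{n}}(D)\leq\delta_{\alpha_{n}}(C)$ holds exactly when $\delta_{\alpha}(D)\leq\delta_{\alpha}(C)$ does; then $m$ is the largest of these thresholds. Fix $(C,D)$ and set $f(\gamma):=\delta_{\gamma}(D)-\delta_{\gamma}(C)=(\card[D]-\card[C])-\gamma(\card[R^{D}]-\card[R^{C}])$, which is affine. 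If $f(\alpha)\neq0$, then continuity of $f$ and $\alpha_{n}\to\alpha$ force $f(\alpha_{n})$ to have the same sign as $f(\alpha)$ for all large $n$, so the required equivalence holds. If $f(\alpha)=0$, then $C\subsetneq D$ gives $\card[C]<\card[D]$ and hence $\card[R^{C}]<\card[R^{D}]$ (the alternative $\card[R^{C}]=\card[R^{D}]$ would make $f$ a positive constant, contradicting $f(\alpha)=0$); since $\{\alpha_{n}\}$ is strictly decreasing with limit $\alpha$ we have $\alpha_{n}>\alpha$, so $f(\alpha_{n})=f(\alpha)-(\alpha_{n}-\alpha)(\card[R^{D}]-\card[R^{C}])<0$ for every $n$, i.e.\ $\delta_{\alpha_{n}}(D)<\delta_{\alpha_{n}}(C)$, which matches the instance at $\alpha$ (true, with equality). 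Hence $\mathcal{D}_{\alpha}=\mathcal{D}_{\alpha_{n}}$ for all $n\geq m$, and therefore $\alpha\mhyphen\cl^{*}_{B}(A)=\alpha_{n}\mhyphen\cl^{*}_{B}(A)$.

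I do not expect a genuine obstacle: the whole argument is the affineness of $\gamma\mapsto\delta_{\gamma}(\cdot)$ combined with the finiteness of $B$, and one inclusion is literally \Cref{lmaSeqProp}(iii). The only point that needs care is the degenerate case $\delta_{\alpha}(D)=\delta_{\alpha}(C)$, where one must use $\alpha_{n}>\alpha$ together with $\card[R^{C}]<\card[R^{D}]$ to see that the weak inequality at $\alpha$ tips strictly in the right direction at $\alpha_{n}$ rather than reversing. As a byproduct one obtains the slightly sharper statement that $A\leq^{*}_{i,\alpha}D\iff A\leq_{i,\alpha_{n}}D$ for every $D\subseteq B$ once $n\geq m$.
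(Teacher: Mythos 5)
Your proof is correct; since the paper states \Cref{crlSeqClosure} without proof, as an immediate consequence of \Cref{lmaSeqProp} and the preceding observation that a relative predimension $\delta_{\alpha}(B/A)=0$ becomes $\delta_{\alpha_{n}}(B/A)<0$ because $\alpha_{n}>\alpha$, your sign-stability argument for the finitely many affine maps $\gamma\mapsto\delta_{\gamma}(D)-\delta_{\gamma}(C)$ (with $A\subseteq C\subsetneq D\subseteq B$) is precisely the intended route written out in full. In particular, your treatment of the degenerate case $\delta_{\alpha}(D)=\delta_{\alpha}(C)$, and of the reverse inclusion that \Cref{lmaSeqProp}(iii) alone does not supply, is exactly what is needed to make the omitted argument complete.
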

\begin{thmB}\label{thmProductQE}
(i) Suppose that there exists a natural number $ m $ such that for any $ n\geq m $ we have $ \alpha_{n}\in\mathbb{Q} $ and $ N_{\alpha_{n}} $ is elementary equivalent to the $ \classPositOpr{\alpha_{n}} $-generic. Then for each formula $ \phi(\bar{x})\in\mathcal{L} $ there exists a closure formula $ \theta_{\phi}(\bar{x})\in\hrchySeq[h]{}{\alpha} $ such that
\[ \prod_{\mathcal{U}}N_{\alpha_{n}}\models\forall\bar{x}(\phi(\bar{x})\leftrightarrow\theta_{\phi}(\bar{x})). \]
\\(ii) $ \prod_{\mathcal{U}}N_{\alpha_{n}}\equiv M^{*}_{\alpha}. $
\end{thmB}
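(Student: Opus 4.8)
The plan is to deduce both parts from \Cref{thmQE}, applied to each $M^{*}_{\alpha_{n}}$, together with the stabilisation results \Cref{lmaSeqProp}, \Cref{crlSeqClosure} and {\L}o\'s's theorem; I use throughout that, $\mathcal{U}$ being non-principal, every cofinite subset of $\omega$ belongs to $\mathcal{U}$. For \emph{part (i)}, fix $\phi(\bar{x})$. For $n\geq m$ the hypothesis gives $N_{\alpha_{n}}\equiv M^{*}_{\alpha_{n}}$, so \Cref{thmQE} furnishes a closure formula $\theta_{n}\in\hrchySeq[h]{}{\alpha_{n}}$ with $N_{\alpha_{n}}\models\forall\bar{x}(\phi\leftrightarrow\theta_{n})$. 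The heart of (i) is to show the $\theta_{n}$ may be taken to be one fixed formula $\theta_{\phi}$ for all large $n$. Inspecting the proof of \Cref{thmQE}, the procedure producing a closure formula from $\phi$ is controlled entirely by: (a) the syntactic shape of $\phi$; (b) the tables of $\classPosit{\alpha_{n}}$-membership, of the relations $\leq_{\alpha_{n}},\leq^{*}_{\alpha_{n}},\leq_{i,\alpha_{n}},\leq^{*}_{i,\alpha_{n}}$, and of $\alpha_{n}\mhyphen\cl$ and $\alpha_{n}\mhyphen\cl^{*}$, all restricted to structures of size at most a constant $K_{\phi}$ depending only on $\phi$; and (c) the bounds $\mu_{\alpha_{n}}(j,k)$ with $j,k\leq K_{\phi}$ coming from \Cref{factUpperBound}.

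Since $\alpha_{n}>\alpha$ turns every non-strict predimension inequality among structures of size $\leq K_{\phi}$ into a strict one in the limit, combining this with \Cref{lmaSeqProp} and \Cref{crlSeqClosure} shows that beyond some $N_{0}\geq m$ the data in (b) is constant in $n$ and coincides with the corresponding $\classPosit{\alpha}$-data: on structures of size $\leq K_{\phi}$ one has $\classPosit{\alpha_{n}}=\classPosit{\alpha}$, the four relations all collapse to $\leq^{*}_{\alpha},\leq^{*}_{\alpha},\leq^{*}_{i,\alpha},\leq^{*}_{i,\alpha}$, and the $\cl$'s and $\cl^{*}$'s agree. The only surviving $n$-dependence is in (c); but those bounds enter the construction — through \Cref{lmaPrimaryFormulas} and \Cref{lmaNegativeCore} — solely to record that a $\leq_{i,\alpha_{n}}$-intrinsic extension of bounded size has at most $\mu_{\alpha_{n}}$ copies, a statement that by \Cref{factUpperBound} holds automatically in $M^{*}_{\alpha_{n}}$. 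Hence inside $M^{*}_{\alpha_{n}}$ the bounded quantifier ``for all (at most $\mu_{\alpha_{n}}$) copies $\dots$'' is just the honest universal ``for all copies $\dots$'', which is exactly the unbounded, \emph{primary} form that the $\classPosit{\alpha}$-construction attaches to the same extension (where, $\delta_{\alpha}(\cdot/\cdot)$ being null there, the copies are infinite in number but the meaning is the same); so the $\classPosit{\alpha_{n}}$-closure formula for $\phi$ is $M^{*}_{\alpha_{n}}$-equivalent to the $n$-independent closure formula $\theta_{\phi}$ that \Cref{thmQE} attaches to $\phi$ over $\classPosit{\alpha}$ and $M^{*}_{\alpha}$. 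Thus $\theta_{\phi}\in\hrchySeq[h]{}{\alpha}$, $\theta_{\phi}\in\hrchySeq[h]{}{\alpha_{n}}$ for $n\geq N_{0}$, $M^{*}_{\alpha}\models\forall\bar{x}(\phi\leftrightarrow\theta_{\phi})$, and $N_{\alpha_{n}}\equiv M^{*}_{\alpha_{n}}\models\forall\bar{x}(\phi\leftrightarrow\theta_{\phi})$ for $n\geq N_{0}$. Since $\{n\geq N_{0}\}$ is cofinite, {\L}o\'s's theorem gives $\prod_{\mathcal{U}}N_{\alpha_{n}}\models\forall\bar{x}(\phi\leftrightarrow\theta_{\phi})$, which is (i).

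For \emph{part (ii)}, let $\sigma$ be a sentence; applying (i) to $\sigma$ gives a closure sentence $\theta_{\sigma}$ and $N_{1}\geq m$ with $M^{*}_{\alpha_{n}}\models(\sigma\leftrightarrow\theta_{\sigma})$ for $n\geq N_{1}$ and $M^{*}_{\alpha}\models(\sigma\leftrightarrow\theta_{\sigma})$. It then suffices to establish that $M^{*}_{\alpha}\models\theta_{\sigma}$ if and only if $M^{*}_{\alpha_{n}}\models\theta_{\sigma}$ for all large $n$ (call this $(\star)$): granting it, for large $n$ one gets $M^{*}_{\alpha}\models\sigma\Leftrightarrow M^{*}_{\alpha}\models\theta_{\sigma}\Leftrightarrow M^{*}_{\alpha_{n}}\models\theta_{\sigma}\Leftrightarrow M^{*}_{\alpha_{n}}\models\sigma\Leftrightarrow N_{\alpha_{n}}\models\sigma$, so $\sigma\in\theory(M^{*}_{\alpha})$ iff $N_{\alpha_{n}}\models\sigma$ for cofinitely many $n$ iff ({\L}o\'s) $\prod_{\mathcal{U}}N_{\alpha_{n}}\models\sigma$, i.e. $\prod_{\mathcal{U}}N_{\alpha_{n}}\equiv M^{*}_{\alpha}$. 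For $(\star)$, reduce $\theta_{\sigma}$ by \Cref{crlCanonicalForm} and \Cref{lmaPrimaryFormulas} to a positive Boolean combination of primary $S^{c}$-sentences (the collapse being, by the argument of (i), valid inside each $M^{*}_{\alpha_{n}}$, $n$ large, with the same primary sentences) and argue termwise. A primary $S^{c}$-sentence asserts the existence of a finite structure $C\in\classPosit{}$ whose $\cl^{*}$-closure carries a prescribed pattern of $\leq^{*}_{i}$-intrinsic sub-extensions; in a generic $M^{*}_{\beta}$, by genericity and homogeneity its truth depends only on the isomorphism type of the canonical closure of a strong copy of $C$, hence only on the $\classPosit{\beta}$-combinatorics of structures of size $\leq|\theta_{\sigma}|$, which by the stabilisation of (i) is the same for $\beta=\alpha$ and for $\beta=\alpha_{n}$ with $n$ large. (The sole discrepancy between these closures is that a $\delta_{\alpha}$-null intrinsic extension occurs in $\cl^{*}_{M^{*}_{\alpha}}(C)$ with infinite multiplicity but in $\cl^{*}_{M^{*}_{\alpha_{n}}}(C)$ with multiplicity $<\mu_{\alpha_{n}}$; a closure formula reads off only the $\leq^{*}_{i}$-intrinsic structure of a closure and not such multiplicities, and a positive Boolean combination of existential and universal conditions over mutually conjugate copies cannot distinguish ``infinitely many'' from ``enough'', so this plays no role.) This proves $(\star)$ and finishes the argument.

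The real work — and the main obstacle — is the uniformity claim underlying (i): a careful pass through the rather involved proof of \Cref{thmQE}, especially \Cref{lmaPrimaryFormulas} and \Cref{lmaNegativeCore}, verifying that every trace of $n$-dependence in the produced closure formula is confined to the bounds $\mu_{\alpha_{n}}$, which enter only through \Cref{factUpperBound} as ``at most $\mu$ copies'' and are therefore vacuous inside $M^{*}_{\alpha_{n}}$, so that modulo $M^{*}_{\alpha_{n}}$-equivalence the formula is the fixed $\classPosit{\alpha}$-closure formula $\theta_{\phi}$. Everything else — including $(\star)$ — is then bookkeeping with \Cref{lmaSeqProp}, \Cref{crlSeqClosure}, genericity, and {\L}o\'s's theorem.
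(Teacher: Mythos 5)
Part (i) of your proposal is in substance the paper's own argument: apply \Cref{thmQE} to each rational $\alpha_{n}$ (via $N_{\alpha_{n}}\equiv M^{*}_{\alpha_{n}}$), use \Cref{lmaSeqProp} and \Cref{crlSeqClosure} to stabilise the bounded-size combinatorial data, conclude that for large $n$ the fixed formula $\theta_{\phi}$ still works in $M^{*}_{\alpha_{n}}$, and finish with {\L}o\'s. The paper asserts the slightly stronger claim that the procedure defining $\Phi_{\psi}$ literally produces the same closure formula for all large $n$, and, like you, leaves the verification at the level of a sketch; so on (i) you and the paper are essentially aligned, with the same key step flagged. One inaccuracy in your account, though: it is not true that after stabilisation ``the $\cl$'s and $\cl^{*}$'s agree'' and that the only surviving $n$-dependence sits in the bounds $\mu_{\alpha_{n}}$. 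For \emph{every} $n$, a $\delta_{\alpha}$-null minimal pair has strictly negative $\delta_{\alpha_{n}}$, so the $\alpha_{n}$-unstarred closure coincides with the $\alpha$-\emph{starred} closure, not with the $\alpha$-unstarred closure that \Cref{lmaPrimaryFormulas} and \Cref{lmaNegativeCore} use; the dividing line between the AC part (capped, existentially treated) and the non-AC part (kept primary, universal) itself moves when passing from $\alpha$ to $\alpha_{n}$. This is precisely where your claimed ``$M^{*}_{\alpha_{n}}$-equivalence of the two outputs'' (or the paper's ``same formula'') needs real work, and neither text supplies it; you should not present it as reducible to the observation that the bounds are vacuous.

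Part (ii) is where you genuinely depart from the paper, and where the gap lies. The paper takes saturated models $M\models T^{*}_{\alpha}$ and $N\models\theory(\prod_{\mathcal{U}}N_{\alpha_{n}})$ and uses part (i), compactness and saturation to show that the pairs of tuples agreeing on all $\hrchySeq[h]{}{\alpha}$-formulas form a back-and-forth system, which gives $\prod_{\mathcal{U}}N_{\alpha_{n}}\equiv M^{*}_{\alpha}$ (and, as a by-product, that agreement on closure formulas determines types). You instead argue sentence-by-sentence through the transfer claim $(\star)$ between $M^{*}_{\alpha}$ and the $M^{*}_{\alpha_{n}}$, and your justification of $(\star)$ misreads what a closure \emph{sentence} is: in the paper's hierarchy the quantifiers of a closure formula range only over $\leq^{*}_{i}$-intrinsic extensions of the free variables, and $\emptyset$ has no nonempty $\leq^{*}_{i}$-intrinsic extension (it would require $\delta(B)\leq\delta(\emptyset)=0$ while $B\in\classPosit{\alpha}$ forces $\delta(B)>0$). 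So a closure sentence does not ``assert the existence of a finite structure $C$ whose $\cl^{*}$-closure carries a prescribed pattern''; a sentence of that shape is not in $\hrchySeq[h]{}{\alpha}$ at all. If closure sentences did have that content, your appeal to ``genericity and homogeneity: truth depends only on the isomorphism type of the canonical closure of a strong copy of $C$'' would be assuming exactly the hard direction of \Cref{thmQE} (the existential witness need not be strong, and the universal clauses range over everything realised in the model), and the parenthetical about multiplicities would not repair this. Ironically, once one notes that genuine closure sentences are degenerate, $(\star)$ is trivial and (ii) follows from (i) applied to sentences with almost no further argument; but as written, your proof of the decisive claim rests on a misdescription of the formula class, whereas the paper's back-and-forth bypasses the analysis of closure sentences entirely.
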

\begin{proof}

(i) By \Cref{thmQE} for each $\phi(\bar{x})\in\mathcal{L} $ there is a closure formula $ \theta_{\phi}(\bar{x})\in\hrchySeq[h]{}{\alpha} $ such that $ M^{*}_{\alpha}\models\forall\bar{x}(\phi(\bar{x})\leftrightarrow\theta_{\phi}(\bar{x})). $ Recall that the main step in proving \Cref{thmQE} amounts to defining the closure formula $ \Phi_{\psi}(\bar{x}) $ and proving the expression (\ref{eqnMainPartOfTheorem}). Now consider a given formula $ \phi(\bar{x}) $ in the form of $ \exists\bar{y}\psi(\bar{x},\bar{y}) $ with $ \psi(\bar{x},\bar{y})\in\hrchySeq[h]{}{\alpha}. $ First note that based on \Cref{lmaSeqProp} there is a natural number $ m_{\psi} $ such that for any $ n\geq m_{\psi} $ we have $ \psi(\bar{x},\bar{y})\in\hrchySeq[h]{}{\alpha_{n}}.$ Moreover based on \Cref{crlSeqClosure} there exists a natural number $ m'_{\psi} $ such that the closure of structures in $ \classPosit{\alpha_{n}} $ remains unchanged for all $ n\geq m'_{\psi}. $ Hence for all $ \alpha_{n}\leq \alpha_{m'_{\psi}} $ the procedure of defining $ \Phi_{\psi}(\bar{x}) $ leads to the same closure formula as the closure formula we obtain for $ \alpha. $ Furthermore \Cref{thmQE} holds for an arbitrary fixed $ \alpha\in(0,1]\cap\mathbb{Q}. $ So there exists a natural number $m_{\phi}=\max\{m,m_{\psi},m'_{\psi}\}$ such that for any $ n\geq m_{\phi}$ we have $ N_{\alpha_{n}}\models\forall\bar{x}(\phi(\bar{x})\leftrightarrow\theta(\bar{x})). $ Therefore
\[\prod_{\mathcal{U}}N_{\alpha_{n}}\models\forall\bar{x}(\phi(\bar{x})\leftrightarrow\theta(\bar{x})). \]
(ii) Let $ M $ and $ N $ be respectively saturated models of $ T^{*}_{\alpha} $ and $ \theory(\prod_{\mathcal{U}}N_{\alpha_{n}}). $ Based on part (i) and using compactness it is easy to see that the following set defines a back and forth system between the substructures of $ M $ and $ N. $
\begin{align*}
  \mathcal{I}:= \Biggl\{(\bar{a},\bar{a}')\;\Bigg|\; \pctext{70mm}{$\bar{a}\in M,\bar{a}'\in N, \card[\bar{a}]=\card[\bar{a}']\neq 0,\bar{a}\equiv_{0}\bar{a}',$\\$\forall\phi(\bar{x})\in\hrchySeq[h]{}{\alpha}, M\models\phi(\bar{a})\Leftrightarrow N\models\phi(\bar{a}').$}\Biggr\}, \\
\end{align*}
hence we have $ \prod_{\mathcal{U}}N_{\alpha_{n}}\equiv M^{*}_{\alpha}. $
\end{proof}

Note that the hypothesis in the first part of the theorem can be replaced by the following statement: ``for almost all $ n\in\omega $ we have that $ \alpha_{n}\in\mathbb{Q} $ and $ N_{\alpha_{n}} $ is elementary equivalent to the $ \classPositOpr{\alpha_{n}} $-generic''.

\section{Finite Model Property}\label{secFMP}
%

In this section we show that $ T^{*}_{\alpha} $ does not have the finite model property, for each $ \alpha\in(0,1]\cap\mathbb{Q}.$ Towards this end we give a weaker form of Proposition 3.3 of \cite{Brody&Laskowski-OnRationalLimits} which suffices for our purpose. Throughout this section we omit the subscript $ \alpha. $ In fact this lemma is restricted to $ T^{*} $ instead of $ T^{*}_{\sem} $:
\begin{lma}\label{lmaDefinFinRel}
For any $ k\in\omega $ there is a definable relation $ R^{k}(x_{1},\ldots,x_{k},y) $, symmetric in the first $ k $ variables, such that for any $ S\subset_{\omega} M^{*}$ and any $ X\subseteq \left[S\right]^{k} $ there exist some $ v\in  M^{*} $ such that for any $ \bar{a}\in M^{*} $ we have
\[ M^{*}\models R^{k}(\bar{a},v)\quad\quad\iff\quad\quad \{a_{1},\ldots,a_{k}\}\in X \]
\end{lma}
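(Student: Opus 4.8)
The plan is to build the relation $R^k$ by a Hrushovski-style coding configuration, exploiting precisely the feature that makes $\classPositOpr{}$ non-AC: over a fixed base $S$ one can attach infinitely many non-isomorphic intrinsic extensions with relative predimension $0$. First I would fix a base configuration $B_S$ realizing the $k$-element subsets of $S$ as distinguishable ``slots'': concretely, for each $k$-subset $\{a_{1},\dots,a_{k}\}$ of $S$ introduce (in a structure $B \supseteq S$) an auxiliary vertex $c_{\{a_1,\dots,a_k\}}$ together with a small gadget of $R$-relations that is symmetric in $a_1,\dots,a_k$ and whose isomorphism type over $S$ records exactly which $k$-subset it points to — and arrange the predimension bookkeeping so that $S \leq^{*} B$ with $\delta(B/S)=0$ for the piece coding each fixed subset (this is where the equality $\delta_\alpha(A)=\card[A]-\alpha\card[R^A]$ and rationality of $\alpha$ are used: one tunes the number of new vertices against the number of new edges so the relative predimension vanishes). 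Then, for a prescribed $X\subseteq[S]^k$, form the structure in which the coding vertex $v$ is joined (again by a fixed symmetric gadget) to exactly those $c_{\{\bar a\}}$ with $\{\bar a\}\in X$, in the free-join manner of \Cref{dfnFreeJoin}, so that the extension remains $\leq^{*}$ over $S$.

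Next I would write down the formula $R^k(x_1,\dots,x_k,y)$ as an existential $\mathcal{L}$-formula (equivalently, by \Cref{thmQE}, a closure formula) asserting: there is a coding vertex $c$ realizing the ``slot'' gadget over $x_1,\dots,x_k$ together with the ``membership'' gadget between $c$ and $y$. Symmetry in the first $k$ variables is built into the choice of gadget. The forward direction — if $\{a_1,\dots,a_k\}\in X$ then $M^{*}\models R^k(\bar a,v)$ — is immediate once $v$ is the chosen coding vertex, because the required $c$ literally sits in the embedded configuration. The converse — $M^{*}\models R^k(\bar a,v)$ forces $\{a_1,\dots,a_k\}\in X$ — is the crux: one must show no \emph{spurious} witness $c$ can appear. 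Here I would invoke \Cref{factUpperBound} / the analysis of intrinsic extensions: the slot-plus-membership gadget over $S\cup\{v\}$ should be designed to be a $\leq_i$-intrinsic extension with \emph{strictly negative} relative predimension (or to force such a configuration into $\cl^{*}(S\cup\{v\})$), so that in the generic $M^{*}$ the only copies of it over $S\cup\{v\}$ are the ones deliberately placed there; hence a witness exists over $v$ only for the subsets we encoded.

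The main obstacle I anticipate is exactly this genericity/uniqueness argument: ensuring that the only realizations in $M^{*}$ of the coding gadget over $S\cup\{v\}$ are the intended ones, and that these do not interfere with one another across different $k$-subsets or across different choices of $v$. This requires a careful predimension calculation showing (a) $S\leq^{*}(S\cup\text{all }c_{\{\bar a\}})$, (b) adding $v$ with its membership edges keeps the whole thing $\leq^{*}$ over $S$ (so the configuration genuinely embeds strongly into $M^{*}$, by genericity), and (c) the ``pointer from $v$ to a slot'' sub-configuration, viewed over $S\cup\{v\}$ alone, is intrinsic with negative relative predimension so that \Cref{factUpperBound} pins down its copies. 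One also has to check the gadgets are realized only with distinct elements and that $R$ symmetric-on-distinct-triples (our standing convention) still permits enough gadgets — this may force $k$-ary coding to be simulated through chains of ternary relations, adding auxiliary vertices but not changing the argument. Modulo these bookkeeping points, which mirror the coding in Proposition 3.3 of \cite{Brody&Laskowski-OnRationalLimits} but only need to work inside $M^{*}$ rather than $M^{*}_{\sem}$, the lemma follows; and once $R^k$ is available, interpreting $\langle\mathbb{Q},<\rangle$ (hence the failure of the finite model property) proceeds as in the cited work.
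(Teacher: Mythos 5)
You should first know that the paper's own ``proof'' of this lemma is nothing more than the citation ``see the proof of Proposition 3.3 of \cite{Brody&Laskowski-OnRationalLimits}'', and your overall plan --- coding membership in $X$ by attaching predimension-controlled gadgets over $\{a_{1},\dots,a_{k},v\}$ and using genericity to realize the whole configuration strongly --- is the same in spirit as that cited argument. The genuine problem is the mechanism you propose for the crucial converse direction. You want the ``pointer'' sub-configuration over $S\cup\{v\}$ to be an intrinsic extension of \emph{strictly negative} relative predimension, so that \Cref{factUpperBound} pins down its copies. But the lemma must handle every $X\subseteq[S]^{k}$, including $X=[S]^{k}$ with $S$ arbitrarily large, and all coded subsets share the single vertex $v$: if each membership gadget contributed $\delta<0$ over $S\cup\{v\}$, then the union of $v$ with sufficiently many of these gadgets would have relative predimension $\leq 0$ over $\closure{M^{*}}{S}$ (it is $1$ plus a sum of strictly negative terms), so the configuration could not be embedded $\leq^{*}$-strongly, and for large $\card[S]$ would not even remain in $\classPosit{}$. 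The non-AC feature you correctly identify at the start forces the coding pieces to have relative predimension exactly $0$; but then \Cref{factUpperBound} gives no bound at all, so counting copies cannot be what excludes spurious witnesses --- and a finite bound would in any case not show that the only copies are the intended ones.

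The way the cited proof (and hence the paper) handles this is by closure containment rather than counting: the gadget is a minimal pair with $\delta=0$, so every copy of it over $\bar{a}v$ is a $\leq^{*}_{i}$-intrinsic extension and therefore lies inside $\closure{M^{*}}{\bar{a}v}\subseteq\closure{M^{*}}{Sv}$; once the finite configuration built over $\closure{M^{*}}{S}$ is embedded $\leq^{*}$-strongly, $\closure{M^{*}}{Sv}$ is contained in $\closure{M^{*}}{S}$ together with the built part, and there one verifies directly --- using that every relation at $v$ is one that was deliberately placed --- that the copies over $\bar{a}v$ are exactly the intended ones. Your parenthetical alternative ``or to force such a configuration into $\cl^{*}(S\cup\{v\})$'' is the right instinct, but it must replace, not supplement, the negative-predimension device. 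With that repair (and the routine simulation of the $k$-ary gadget by ternary relations with auxiliary vertices, which you already note), your outline does reduce to the argument of Proposition 3.3 of \cite{Brody&Laskowski-OnRationalLimits}, which is all the paper itself invokes.
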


\begin{proof}
See the proof of Proposition 3.3 in \cite{Brody&Laskowski-OnRationalLimits}.
\end{proof}

Based on the above lemma any $ k $-element subset $ X $ of a finite subset $ S\subseteq M^{*} $ has a ``\textit{code}" $ v\in M^{*} $ that defines $ X $ by $ R^{k}(\bar{x},v). $

\begin{rmrk}\label{rmrkBasicOperation}
It is easy to see that one can find new codes for the sets obtained by the basic set-theoretic operations on some given coded sets. To ease later references we fix a notation for the formulas which correspond to some of these operations. So suppose that $ v_{1} $ and $ v_{2} $ code respectively the sets $ X_{1}\subseteq[S_{1}]^{k} $ and $ X_{2}\subseteq[S_{1}]^{k} $ with $ S_{1}\cap S_{2}=\emptyset. $ Now we fix the following:

- Formula $ \upsilon^{k}(v_{1},v_{2},w), $ declaring that $ w $ codes $ X_{1}\cup X_{2}. $

- Formula $ \pi^{k}(v_{1},v_{2},w), $ declaring that $ w $ codes the set $ Y=\{x_{1}\cup x_{2}\hspace*{3pt}|\hspace*{3pt}x_{i}\in X_{i},i=1,2\}. $ Note that since $ S_{1}\cap S_{2}=\emptyset $ the cardinality of $ Y $ is equal to the cardinality of $ X_{1}\times X_{2}. $

- Formula $ \eta^{k}(v_{1},v_{2}), $ which indicates that there is an injection from $ X_{1} $ into $ X_{2}. $

- Formula $ \theta^{k}(v_{1},v_{2}), $ which states that there is a bijection between $ X_{1} $ and $ X_{2}. $
\end{rmrk}

Our aim is to interpret $ \langle\mathbb{Q}^{\geq 0},+,.,<\rangle $ in $ M^{*} $ (\Cref{propQSumMul}). We further generalize the technique used in \cite{Brody&Laskowski-OnRationalLimits} for interpreting Robinson arithmetic in the semigeneric models of $ \classPositOpr{}. $

\paragraph*{Motivation and set up.} \hspace*{-7pt}Fix three finite substructures $ A,B $ and $ C $ of $ M^{*} $ with $ \minPair{A}{B} $, $ \minPair{A}{C} $, $ \delta(B/A)=\delta(C/A)=0 $, and $ B\not{\cong_{A}}C. $ The idea is to interpret a positive rational number $ p/q $ by a copy of $ A, $ over which it is realized $ p $ copies of $ B, $ and $ q $ copies of $ C. $ In fact $ \chi(B/A) $ plays the role of the numerator and $ \chi(C/A) $ that of the denominator.

Now let $ m,n\in\omega $ denote respectively the cardinality of the sets $ B\backslash A $ and $ C\backslash A. $ Let $ \mathbb{A} $ be the set of all $ \bar{a}\in M^{*} $ with the following properties:

$ \bullet\hspace*{3pt}\bar{a}\cong A. $

$ \bullet\hspace*{3pt}\chi(C/A)\geq 1. $

$ \bullet $ The intersection of any two distinct copies of B (resp. copies of $ C $) over A is A.

$ \bullet $ The intersection of a copy of $ B $ over $ A $ with a copy of $ C $ over $ A $ is $ A $.

It is easy to write a formula that defines $ \mathbb{A}, $ let us denote it by $ \phi_{A}(\bar{x}). $

\paragraph*{Notation.} \hspace*{-7pt}We borrow the notion of a basis from \cite{Brody&Laskowski-OnRationalLimits}. So for each $ \bar{a}\in\mathbb{A} $ let $ \mathbb{B}_{\bar{a}} $ and $ \mathbb{C}_{\bar{a}} $ respectively denote the union of all copies of $ B $ and $ C $ over $ \bar{a}. $ A subset $ \mathbf{B}\subseteq\mathbb{B}_{\bar{a}} $ is called a $ B $\textit{-basis} for $ \bar{a} $ if it contains exactly one element of each copy of $ B $ over $ \bar{a} $. Similarly we can define a $ C $\textit{-basis}.

The notion of $ B $-basis can be expressed by a formula $ \beta_{A}(\bar{x},v) $ defined as following
\begin{align*}
&\forall y\Big[R^1(y,v)\rightarrow\Big(\exists y_{2}\cdots y_{m}Diag_{(A,B)}(\bar{x},yy_{2}\ldots y_{m})\wedge\\
&\hspace{8mm}\forall y'\Big((y'\neq y\wedge R^{1}(y',u))\rightarrow\neg\exists y_{3}\cdots y_{m}Diag_{(A,B)}(\bar{x},yy'y_{3}\ldots y_{m})\Big)\Big)\Big].
\end{align*}
In fact $ \beta_{A}(\bar{x},v) $ indicates that $ v $ codes a $ B $-basis for $ \bar{x}. $ Similarly we may set a formula $ \gamma_{A}(\bar{x},v) $ to express the notion of $ C $-basis.

\begin{lma}\label{lmaQOrder}
$ \langle\mathbb{Q},<\rangle $ is interpretable in $ M^{*}. $
\end{lma}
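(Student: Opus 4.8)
The plan is to interpret, on a definable subset of $\mathbb{A}$, the ordered set $\langle\mathbb{Q}^{>0},<\rangle$; since this is a countable dense linear order without endpoints it is isomorphic to $\langle\mathbb{Q},<\rangle$, which will therefore be interpretable in $M^{*}$. For $\bar a\in\mathbb{A}$, the defining properties of $A,B,C$ together with \Cref{factIntrisicForPredim} give $A\leq^{*}_{i}B$, $A\leq^{*}_{i}C$, as well as $A\leq B$, $A\leq C$ and $\delta(B/A)=\delta(C/A)=0$; hence every copy of $B$ or of $C$ over $\bar a$ lies inside $\cl^{*}_{M^{*}}(\bar a)$, which is finite by \Cref{factClosureProperties}, so $\chi_{M^{*}}(B/\bar a)$ and $\chi_{M^{*}}(C/\bar a)$ are finite, the latter $\geq 1$. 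Put $\mathbb{A}^{+}:=\{\bar a\in\mathbb{A}\mid\chi(B/\bar a)\geq 1\}$, a definable subset, and for $\bar a\in\mathbb{A}^{+}$ let $\rho(\bar a):=\chi(B/\bar a)/\chi(C/\bar a)\in\mathbb{Q}^{>0}$. The interpretation will have universe $\mathbb{A}^{+}$, equivalence relation $\bar a\sim\bar a'\iff\rho(\bar a)=\rho(\bar a')$, and order $\bar a\trianglelefteq\bar a'\iff\rho(\bar a)\leq\rho(\bar a')$.

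For definability of $\trianglelefteq$ (hence of $\sim=\trianglelefteq\cap\trianglerighteq$): cross-multiplying, $\rho(\bar a)\leq\rho(\bar a')$ iff $\chi(B/\bar a)\cdot\chi(C/\bar a')\leq\chi(B/\bar a')\cdot\chi(C/\bar a)$. By \Cref{lmaDefinFinRel} the $B$-basis and the $C$-basis of a tuple (picked out by $\beta_{A}$, $\gamma_{A}$) can each be coded by a single element of $M^{*}$, and by \Cref{rmrkBasicOperation} one can pass to a code for the product of two coded sets over disjoint supports, and express the existence of an injection, or of a bijection, between two coded sets. Accordingly I would write $\bar a\trianglelefteq\bar a'$ as $\exists\bar b\,\bar b'\,(\cdots)$, the matrix asserting: $\bar b,\bar b'\in\mathbb{A}^{+}$; the closures of $\bar a,\bar a',\bar b,\bar b'$ are pairwise disjoint; $\chi(B/\bar b)=\chi(B/\bar a)$, $\chi(C/\bar b)=\chi(C/\bar a)$ and likewise for $\bar b',\bar a'$ (each equality witnessed by a bijection of the corresponding coded bases, now legitimately over disjoint supports); and there is an injection from the coded set $B\text{-basis}(\bar b)\times C\text{-basis}(\bar b')$ into the coded set $B\text{-basis}(\bar b')\times C\text{-basis}(\bar b)$. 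Suitable $\bar b,\bar b'$ always exist in $M^{*}$ — strongly embed a copy of $\cl^{*}_{M^{*}}(\bar a)\sqcup\cl^{*}_{M^{*}}(\bar a')$ disjoint from what is already present, using genericity — so the formula is correct, and the disjointness bookkeeping follows the coding arguments of \cite{Brody&Laskowski-OnRationalLimits}. Its truth value depends only on $\rho(\bar a),\rho(\bar a')$, so $\trianglelefteq$ descends to a linear order of $\mathbb{A}^{+}/{\sim}$.

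It remains to see that the interpreted order is a countable dense linear order without endpoints. Every positive rational is realized: given $p,q\geq 1$ let $D$ be the free join over $A$ of $p$ copies of $B$ and $q$ copies of $C$, in general position. A submodularity computation (using $A\leq B$, $A\leq C$ and $\delta(B/A)=\delta(C/A)=0$) shows $D\in\classPosit{}$, $\emptyset\leq^{*}D$, and $A\leq^{*}_{i}D$ with $\delta(D/A)=0$, so $\cl^{*}_{D}(A)=D$; moreover $\chi_{D}(B/A)=p$ and $\chi_{D}(C/A)=q$ exactly, because a minimal pair with $\delta=0$ is indecomposable over $A$ (its top is not a nontrivial free join over $A$), so no copy of $B$ can straddle two of the $p$ copies, while $B\not\cong_{A}C$ and minimality exclude copies of $B$ inside the $C$-copies and conversely. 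Since $\emptyset\leq^{*}D$, genericity gives a $\leq^{*}$-strong copy $D'\cong D$ in $M^{*}$; its sub-copy $\bar a$ of $A$ has $\cl^{*}_{M^{*}}(\bar a)=D'$ (as $D'$ is closed in $M^{*}$, while $A\leq^{*}_{i}D$ forces $D'\subseteq\cl^{*}_{M^{*}}(\bar a)$), so $\bar a\in\mathbb{A}^{+}$ with $\rho(\bar a)=p/q$. Density and absence of endpoints then follow from the arithmetic of $\mathbb{Q}^{>0}$: for realized $p/q<r/s$ the realized value $(p+r)/(q+s)$ lies strictly between them, and $p/(q+1)<p/q<(p+1)/q$ are all realized. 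As $M^{*}$ is countable, $\mathbb{A}^{+}/{\sim}$ is a countable dense linear order without endpoints, hence isomorphic to $\langle\mathbb{Q},<\rangle$.

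The main obstacle is pinning down $\chi_{M^{*}}(B/\bar a)$ and $\chi_{M^{*}}(C/\bar a)$ exactly, i.e.\ controlling $\cl^{*}_{M^{*}}(\bar a)$: this is what forces the predimension bookkeeping establishing $A\leq^{*}_{i}D$ and the indecomposability of minimal pairs, and it relies essentially on $M^{*}$ having finite closures. A secondary technical nuisance is ensuring the coded bases sit over disjoint supports so that the operations of \Cref{rmrkBasicOperation} apply, which is why the definition of $\trianglelefteq$ quantifies over mutually generic representatives $\bar b,\bar b'$ rather than over $\bar a,\bar a'$ directly.
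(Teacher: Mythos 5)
Your proposal is correct and follows essentially the same route as the paper: encode a $B$-basis and a $C$-basis of $\bar a$ via the coding relation of Lemma \ref{lmaDefinFinRel}, compare the ratios $\chi(B/\bar a)/\chi(C/\bar a)$ by cross-multiplication using the product/injection/bijection formulas of Remark \ref{rmrkBasicOperation}, and realize every positive rational by strongly embedding free joins of copies of $B$ and $C$ over $A$, so the quotient is a countable dense linear order isomorphic to $\langle\mathbb{Q},<\rangle$. Your two refinements --- restricting to tuples with $\chi(B/\bar a)\geq 1$ so as to avoid the endpoint $0$, and routing the comparison through auxiliary representatives with pairwise disjoint closures so that the coded-set operations are literally applicable --- are only more explicit versions of what the paper handles by its genericity remark, not a different argument.
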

\begin{proof}
The equivalence relation we consider on $ M^{*} $ is defined in such a way that each $ \bar{a}_{1} $ and $ \bar{a}_{2} $ in $ M^{*} $ be equivalent if and only if $ \chi(B/\bar{a}_{1}).\chi(C/\bar{a}_{2}) = \chi(B/\bar{a}_{2}).\chi(C/\bar{a}_{1}). $ So let $ E(\bar{x}_{1},\bar{x}_{2}) $ be the following formula
\begin{align*}
\bigwedge_{i=1}^{2}\phi_{A}(\bar{x}_{i})\wedge\exists \bar{u}\bar{v}\bar{w}&\Big[\bigwedge_{i=1}^{2}\beta_{A}(\bar{x}_{i},u_{i})\wedge\bigwedge_{i=1}^{2}\gamma_{A}(\bar{x}_{i},v_{i})\wedge\\
&\pi^{1}(u_{1},v_{2},w_{1})\wedge\pi^{1}(u_{2},v_{1},w_{2})\wedge\theta^{2}(w_{1},w_{2})\Big],
\end{align*}
where $ \card[\bar{u}]=\card[\bar{v}]=\card[\bar{w}]=2. $ The formula above expresses literally that while $ u_{1} $ and $ u_{2} $ code respectively a basis for $ \mathbb{B}_{a_{1}} $ and $ \mathbb{B}_{a_{2}} $, say $ \mathbf{B}_{1} $ and $ \mathbf{B}_{2} $, $v_{1} $ and $ v_{2} $ define respectively a basis for $ \mathbb{C}_{a_{1}} $ and $ \mathbb{C}_{a_{2}} $, say $ \mathbf{C}_{1} $ and $ \mathbf{C}_{2} $. Furthermore $ w_{1} $ and $ w_{2} $ code respectively $ \mathbf{B}_{1}\times\mathbf{C}_{2} $ and  $ \mathbf{B}_{2}\times\mathbf{C}_{1}, $ and $ w_{3} $ defines a bijection between $ \mathbf{B}_{1}\times\mathbf{C}_{2} $ and $ \mathbf{B}_{2}\times\mathbf{C}_{1} $.

Note that using the genericity of $ M^{*}, $ for any $ p,q,p',q'\in\omega $ with $ q\neq 0 $ and $ q'\neq 0, $ the disjoint union of the representatives of the classes $ p/q $ and $ p'/q' $ is strongly embeddable into $ M^{*}.$ Hence the formulas introduced in \Cref{rmrkBasicOperation} have still their valid meanings. Also since the closures of finite subsets of $ M^{*} $ are finite, it is easy to check that the above formula defines an equivalence relation with the required property.

To define an order on $ \mathbb{A}/E, $ it suffices to replace the bijection used in $ E(\bar{x}_{1},\bar{x}_{2}) $ by an injection. Therefore let $ O(\bar{x}_{1},\bar{x}_{2}) $ be the following formula
\begin{align*}
\bigwedge_{i=1}^{2}\phi_{A}(\bar{x}_{i})\wedge\exists \bar{u}\bar{v}\bar{w}&\Big[\bigwedge_{i=1}^{2}\beta_{A}(\bar{x}_{i},u_{i})\wedge\bigwedge_{i=1}^{2}\gamma_{A}(\bar{x}_{i},v_{i})\wedge\\
&\pi^{1}(u_{1},v_{2},w_{1})\wedge\pi^{1}(u_{2},v_{1},w_{2})\wedge\eta^{2}(w_{1},w_{2})\Big].
\end{align*}
where $ \card[\bar{u}]=\card[\bar{v}]=\card[\bar{w}]=2. $ Now it is easy to check that $ \langle\mathbb{A}/E,O(\bar{x}_{1},\bar{x}_{2})\rangle $ interprets a linear order. Moreover since $ \delta(B/A)=0 $, for any $ p\in\omega $ the predimension of the structure constructed by $ A $ with exactly $ p $ disjoint copies of $ B $, all be mutually freely joined over $ A $, is strictly positive and hence this structure lies in $ \classPosit{} $. The same is true for any $ p,q\in\omega $ with $ q\neq 0 $ and the structure constructed by $ A $ with exactly $ p $ disjoint copies of $ B $ and $ q $ disjoint copies of $ C $ all be mutually freely joined over $ A $. Also note that all of these structures are embedded strongly in $ M^{*}. $ Therefore $ \langle\mathbb{A}/E,O(\bar{x}_{1},\bar{x}_{2})\rangle $ is dense and we have $ \langle\mathbb{A}/E,O(\bar{x}_{1},\bar{x}_{2})\rangle\cong \langle\mathbb{Q},<\rangle. $
\end{proof}

The lemma above answers the question proposed in \cite{Evans&Wong-SomeRemarksonGen}. Hence the following theorem is established.
\begin{thm}\label{thmFMP}
$ M^{*} $ does not have the finite model property.\qed
\end{thm}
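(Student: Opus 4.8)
The plan is to derive the failure of the finite model property as an immediate consequence of \Cref{lmaQOrder}. Since $T^{*}=\theory(M^{*})$ is complete, it has the finite model property precisely when every sentence consistent with it --- equivalently, every $\mathcal{L}$-sentence true in $M^{*}$ --- has a finite model. Thus it suffices to produce a single $\mathcal{L}$-sentence $\varphi$ with $M^{*}\models\varphi$ and such that $\varphi$ has no finite model.

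To build $\varphi$, I would take the formulas witnessing the interpretation of $\langle\mathbb{Q},<\rangle$ supplied by the proof of \Cref{lmaQOrder}: the domain formula $\phi_{A}(\bar{x})$, the equivalence relation $E(\bar{x}_{1},\bar{x}_{2})$, and the order $O(\bar{x}_{1},\bar{x}_{2})$, in some fixed tuple of variables of length $k$. Let $\varphi$ be the $\mathcal{L}$-sentence asserting, uniformly, that these formulas define a dense linear order without endpoints with at least two elements on the $\phi_{A}$-tuples modulo $E$; that is, that $E$ is an equivalence relation on the set defined by $\phi_{A}$, that $O$ is $E$-invariant and induces on the quotient an irreflexive, transitive, total relation that is dense and has neither a least nor a greatest class, and that the quotient contains at least two classes. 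Because the theory of dense linear orders without endpoints is finitely axiomatizable, $\varphi$ is a single first-order sentence. By \Cref{lmaQOrder} the interpreted structure in $M^{*}$ is isomorphic to $\langle\mathbb{Q},<\rangle$, so $M^{*}\models\varphi$ and hence $\varphi\in T^{*}$. On the other hand, if $N$ were a finite $\mathcal{L}$-structure with $N\models\varphi$, then the set of $\phi_{A}$-tuples of $N$ modulo $E$, ordered by $O$, would be a finite dense linear order without endpoints with at least two elements; but every nonempty finite linear order has a least element, a contradiction. Therefore $\varphi$ has no finite model, and $T^{*}=T^{*}_{\alpha}$ fails the finite model property for every rational $\alpha\in(0,1]$.

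I do not anticipate any genuine obstacle here: the substance of the argument is \Cref{lmaQOrder} (and, underlying it, \Cref{lmaDefinFinRel}, a weakening of Proposition 3.3 of \cite{Brody&Laskowski-OnRationalLimits}). The only points that call for a little care are the bookkeeping needed to check that ``the interpreted quotient is a dense linear order without endpoints with at least two elements'' really is captured by one first-order sentence, and the trivial but essential observation that passing to a finite ambient model makes the interpreted quotient finite, which is incompatible with density together with the absence of endpoints.
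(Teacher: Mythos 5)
Your proposal is correct and matches the paper's intended argument: the paper derives \Cref{thmFMP} immediately from \Cref{lmaQOrder} (hence the \qed with no separate proof), the point being exactly that the interpretation formulas make a single sentence asserting a dense linear order without endpoints (with at least two classes) true in $M^{*}$, which no finite structure can satisfy. You have merely made explicit the routine bookkeeping that the paper leaves implicit.
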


The following result enables us to interpret true arithmetic in $ M^{*}. $
\begin{prop}\label{propQSumMul}
The structure $ \langle\mathbb{Q}^{\geq 0},+,.,<\rangle $ is interpretable in $ M^{*}. $
\end{prop}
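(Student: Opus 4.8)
The plan is to build on \Cref{lmaQOrder} and its set-up, reusing the coding machinery of \Cref{rmrkBasicOperation}: for a copy $\bar{a}$ of $A$ carrying $p$ copies of $B$ and $q$ copies of $C$, we think of $\bar{a}$ as a name for the rational $p/q$, and of a $B$-basis (resp.\ $C$-basis) code for $\bar{a}$ as a name for the integer $p$ (resp.\ $q$). The underlying set of the interpretation is $\mathbb{A}$, the equivalence relation is $E$ from \Cref{lmaQOrder}, and the order is $O$; it remains to define addition and multiplication on $\mathbb{A}/E$ and to check they are well-defined and satisfy the axioms that pin down $\langle\mathbb{Q}^{\geq 0},+,\cdot,<\rangle$.

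First I would handle multiplication, since on fractions it is the more transparent operation: $\tfrac{p_1}{q_1}\cdot\tfrac{p_2}{q_2}=\tfrac{p_1p_2}{q_1q_2}$, and $\pi^{1}$ already produces a code for a product set whose cardinality is the product of the two cardinalities. So I would let $\mathrm{Mul}(\bar{x}_1,\bar{x}_2,\bar{x}_3)$ assert: $\bar{x}_1,\bar{x}_2,\bar{x}_3\in\mathbb{A}$, and there exist $B$-basis codes $u_1,u_2$ for $\bar{x}_1,\bar{x}_2$, $C$-basis codes $v_1,v_2$ for $\bar{x}_1,\bar{x}_2$, a $B$-basis code $u_3$ and a $C$-basis code $v_3$ for $\bar{x}_3$, together with codes $w,w'$ with $\pi^{1}(u_1,u_2,w)$, $\pi^{1}(v_1,v_2,w')$, $\theta^{2}(w,u_3')$ and $\theta^{2}(w',v_3')$ where $u_3',v_3'$ are the ``squared-up'' codes matching the arities (this is exactly the bookkeeping already used inside $E$). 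The disjointness hypotheses needed for $\pi^{1}$ are arranged just as in \Cref{lmaQOrder}: by genericity of $M^{*}$ one may realize the relevant copies of $B$ and $C$ over fresh, mutually free copies of $A$, and finiteness of closures guarantees the counts are the genuine $\chi$-values. Addition is similar but one clears denominators: $\tfrac{p_1}{q_1}+\tfrac{p_2}{q_2}=\tfrac{p_1q_2+p_2q_1}{q_1q_2}$, so I would define $\mathrm{Add}(\bar{x}_1,\bar{x}_2,\bar{x}_3)$ using two applications of $\pi^{1}$ to form codes for $\mathbf{B}_1\times\mathbf{C}_2$ and $\mathbf{B}_2\times\mathbf{C}_1$ over disjoint supports, then $\upsilon$ to form a code for their union (whose cardinality is $p_1q_2+p_2q_1$ since the supports are disjoint), another $\pi^{1}$ for $\mathbf{C}_1\times\mathbf{C}_2$, and finally $\theta$-bijections matching these against a $B$-basis and a $C$-basis code for $\bar{x}_3$.

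Once the graphs $\mathrm{Add}$ and $\mathrm{Mul}$ are written down, the remaining work is verification, in three pieces. \textbf{(a) Functionality and $E$-invariance:} one must check that for any $\bar{a}_1,\bar{a}_2\in\mathbb{A}$ there is some $\bar{a}_3\in\mathbb{A}$ with $\mathrm{Add}(\bar{a}_1,\bar{a}_2,\bar{a}_3)$ (existence), that any two such $\bar{a}_3$ are $E$-equivalent (uniqueness up to $E$), and that replacing $\bar{a}_1,\bar{a}_2$ by $E$-equivalent tuples produces $E$-equivalent outputs; existence uses genericity to strongly embed the structure consisting of $A$ with $p_1q_2+p_2q_1$ disjoint copies of $B$ and $q_1q_2$ disjoint copies of $C$ mutually freely joined over $A$ (which lies in $\classPosit{}$ because $\delta(B/A)=\delta(C/A)=0$, exactly as argued at the end of \Cref{lmaQOrder}), and uniqueness/invariance follow from the fact that $E$, $\theta^{2}$, and $\eta^{2}$ faithfully compare the true cardinalities $\chi$, which are finite. \textbf{(b) Field-of-fractions axioms:} associativity, commutativity, distributivity, neutral elements ($A$ with no copies of $B$ as $0$, one copy of $B$ and one copy of $C$ as $1$), and compatibility of $<$ with $+$ and $\cdot$ all reduce to the corresponding identities among the integers $p_i,q_i$, which hold because the coding formulas track exact cardinalities. \textbf{(c) Matching $\langle\mathbb{Q}^{\geq 0},+,\cdot,<\rangle$:} surjectivity onto all of $\mathbb{Q}^{\geq 0}$ is the content of \Cref{lmaQOrder} together with existence of representatives for every $p/q$, and $\langle\mathbb{Q}^{\geq 0},+,\cdot,<\rangle$ is the (unique up to isomorphism) ordered semifield whose underlying ordered set is the non-negative part of a dense linear order without endpoints of a prime field of characteristic $0$ — so it suffices to name $0$ and $1$ definably and verify the semifield axioms.

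I expect the main obstacle to be the bookkeeping around disjointness of supports: the formulas $\pi^{1}$, $\upsilon^{1}$, $\theta^{2}$ of \Cref{rmrkBasicOperation} are stated for coded sets sitting over \emph{disjoint} finite supports $S_1,S_2$, whereas the copies of $B$ and $C$ attached to a single $\bar{a}$ share the base $A$, and in $\mathrm{Add}$ we must combine codes derived from $\bar{a}_1$ and $\bar{a}_2$ simultaneously. The fix is to never operate directly on the ambient copies but to first pull bases into disjoint regions of $M^{*}$ via genericity — i.e.\ to existentially quantify over fresh strong copies of the relevant finite configurations and use $\theta$-bijections to transport counts back — and then to argue, using finiteness of closures (\Cref{factClosureProperties}(ii)) and the fact that $\delta(B/A)=\delta(C/A)=0$ forces the new configurations to remain in $\classPosit{}$ and hence strongly embeddable, that the counts are preserved. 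This is precisely the kind of argument already carried out in \Cref{lmaQOrder}, so I anticipate the proof will consist of writing the two formulas carefully, invoking \Cref{lmaDefinFinRel}, \Cref{rmrkBasicOperation}, \Cref{lmaQOrder}, and genericity, and then dispatching the axiom checks as routine cardinality arithmetic; accordingly I would keep the written proof short and refer back to the techniques of \cite{Brody&Laskowski-OnRationalLimits} and \Cref{lmaQOrder} rather than reproducing them.
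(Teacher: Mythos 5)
Your proposal matches the paper's proof essentially verbatim: multiplication is defined by comparing codes for $\mathbf{B}_{1}\times\mathbf{B}_{2}$ and $\mathbf{C}_{1}\times\mathbf{C}_{2}$ with a $B$-basis and $C$-basis of $\bar{x}_{3}$ via $\pi^{1}$ and $\theta^{2}$, and addition by forming codes for $\mathbf{B}_{1}\times\mathbf{C}_{2}$ and $\mathbf{C}_{1}\times\mathbf{B}_{2}$, taking their union with $\upsilon$, and matching against $\mathbf{C}_{1}\times\mathbf{C}_{2}$, exactly as in the formulas $M$ and $A$ of the paper. Your additional remarks on $E$-invariance, disjointness of supports, and the axiom checks are sound and in fact spell out verification that the paper leaves implicit, relying as you do on the genericity and finite-closure arguments already made in the proof of \Cref{lmaQOrder}.
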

\begin{proof}
Sticking to the notations used in the proof of \Cref{lmaQOrder} we define multiplication on $ \mathbb{A}/E $ by the formula $ M(\bar{x}_{1},\bar{x}_{2},\bar{x}_{3}) $ which is defined as follows
\begin{align*}
\bigwedge_{i=1}^{3}\phi_{A}(\bar{x}_{i})\hspace*{2pt}\wedge\hspace*{2pt}&\exists \bar{u}\bar{v}\bar{w}\Big[\bigwedge_{i=1}^{3}\beta_{A}(\bar{x}_{i},u_{i})\wedge\bigwedge_{i=1}^{3}\gamma_{A}(\bar{x}_{i},v_{i})\wedge\\
&\pi^{1}(u_{1},u_{2},w_{1})\wedge\pi^{1}(v_{1},v_{2},w_{2})\wedge\theta^{2}(w_{1},u_{3})\wedge\theta^{2}(w_{2},v_{3})\Big].
\end{align*}
where $ \card[\bar{u}]=\card[\bar{v}]=3 $ and $ \card[\bar{w}]=2. $

Also to define the addition function, let $ A(\bar{x}_{1},\bar{x}_{2},\bar{x}_{3}) $ be the following formula
\begin{align*}
\bigwedge_{i=1}^{3}\phi_{A}(\bar{x}_{i})\wedge\exists \bar{u}\bar{v}\bar{w}s&\Big[\bigwedge_{i=1}^{3}\beta_{A}(\bar{x}_{i},u_{i})\wedge\bigwedge_{i=1}^{3}\gamma_{A}(\bar{x}_{i},v_{i})\wedge\pi^{1}(u_{1},v_{2},w_{1})\wedge\pi^{1}(v_{1},u_{2},w_{2})\\
&\wedge\pi^{1}(v_{1},v_{2},w_{3})\wedge\upsilon^{2}(w_{1},w_{2},s)\wedge\theta^{2}(s,u_{3})\wedge\theta^{2}(w_{3},v_{3})\Big].
\end{align*}
where $ \card[\bar{u}]=\card[\bar{v}]=\card[\bar{w}]=3. $
\end{proof}

\begin{crl}\label{crlTrueArithmetics}
The structure $ \langle\mathbb{N},<,+,.,1\rangle $ is interpretable in $ M^{*}. $ In particular, true arithmetic is interpretable in $ \theory(M^{*}). $\qed
\end{crl}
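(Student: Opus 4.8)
The plan is to deduce the corollary from \Cref{propQSumMul} by exhibiting $\langle\mathbb N,<,+,\cdot,1\rangle$ as a definable substructure of the interpreted copy of $\langle\mathbb Q^{\geq0},+,\cdot,<\rangle$ living inside $M^{*}$, and then to invoke transitivity of interpretations. Throughout I keep the notation of the proofs of \Cref{lmaQOrder} and \Cref{propQSumMul}: the interpreted rationals are the $E$-classes of tuples $\bar a\in\mathbb A$, with $[\bar a]$ corresponding to the rational $\chi(B/\bar a)/\chi(C/\bar a)$, and the interpreted operations and order are given by the formulas $A(\cdot,\cdot,\cdot)$, $M(\cdot,\cdot,\cdot)$, $O(\cdot,\cdot)$ built from $\beta_A,\gamma_A,\pi^k,\upsilon^k,\theta^k,\eta^k$.

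The key step is to carve $\mathbb N$ out of $\mathbb A/E$. The observation is that a rational $r\in\mathbb Q^{\geq0}$ is a natural number exactly when $r=p/1$ for some $p\in\omega$, i.e.\ exactly when the corresponding $E$-class contains a representative $\bar a$ with $\chi(C/\bar a)=1$; conversely every natural number is realised, since for each $p\in\omega$ the structure consisting of $A$ together with $p$ pairwise disjoint copies of $B$ and a single copy of $C$, all mutually free over $A$, lies in $\classPosit{}$ (because $\delta(B/A)=\delta(C/A)=0$ makes its predimension strictly positive) and hence, by genericity, strongly embeds into $M^{*}$. As distinct copies of $C$ over $\bar a$ already meet exactly in $A$ whenever $\bar a\in\mathbb A$, the property $\chi(C/\bar a)=1$ is first order (forbid two distinct copies of $C$), and therefore
\[
\mathbb N_{0}\ :=\ \big\{\,[\bar a]\in\mathbb A/E \ \big|\ \exists\,\bar b\in\mathbb A\ \big(\bar b\mathrel{E}\bar a\ \wedge\ \chi(C/\bar b)=1\big)\,\big\}
\]
is a definable subset of $\mathbb A/E$, and under the isomorphism $\mathbb A/E\cong\mathbb Q^{\geq0}$ it is precisely $\mathbb N$. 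One then checks, by tracking cardinalities through the $\pi^{1}$, $\upsilon^{2}$ and $\theta^{2}$ combinations occurring in $A(\cdot,\cdot,\cdot)$ and $M(\cdot,\cdot,\cdot)$, that $p/1+p'/1=(p+p')/1$ and $(p/1)\cdot(p'/1)=(pp')/1$, so $\mathbb N_{0}$ is closed under the interpreted addition and multiplication; the interpreted constant $1$ — the $E$-class with $\chi(B/\bar a)=\chi(C/\bar a)=1$ — lies in $\mathbb N_{0}$, and $O$ restricts to the usual order. Hence $\langle\mathbb N_{0},O,A,M,1\rangle\cong\langle\mathbb N,<,+,\cdot,1\rangle$, and as a definable substructure of a structure interpretable in $M^{*}$ it is itself interpretable in $M^{*}$. (Alternatively one could interpret the field $\langle\mathbb Q,+,\cdot\rangle$ in $\langle\mathbb Q^{\geq0},+,\cdot,<\rangle$ via signed pairs and quote Julia Robinson's definability of $\mathbb Z$ in $\mathbb Q$; I prefer the self-contained route above.)

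For the final clause, the interpretation just produced is uniform and parameter free, so it applies inside every model of $T^{*}=\theory(M^{*})$. For each sentence $\sigma$ of true arithmetic, the $\mathcal L$-sentence ``$\sigma$ holds in the interpreted structure'' is true in $M^{*}$, hence belongs to $T^{*}$ by completeness, and therefore $\sigma$ holds in the interpreted structure of every model of $T^{*}$; thus true arithmetic is interpretable in $\theory(M^{*})$. There is no serious obstacle here — consistent with the corollary being immediate from \Cref{propQSumMul} — and the only two points needing any care are those flagged above: that $\mathbb N$ must be cut out of the interpreted rationals by quantifying over $E$-representatives, since ``$\chi(C/\bar a)=1$'' is not itself $E$-invariant, and the bookkeeping on numerators and denominators that confirms $\mathbb N_{0}$ is closed under the interpreted operations.
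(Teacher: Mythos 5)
Your proof is correct, and it follows the route the paper intends: the corollary is stated with no proof, as an immediate consequence of \Cref{propQSumMul}. What you add is a concrete way of carving $\mathbb{N}$ out of the interpreted copy of $\mathbb{Q}^{\geq 0}$, and your choice is a good one: rather than defining $\mathbb{N}$ inside the abstract structure $\langle\mathbb{Q}^{\geq 0},+,\cdot,<\rangle$ (which is possible but nontrivial, e.g.\ via Julia Robinson's definition of the integers in the rationals), you use the ambient $M^{*}$ and take the $E$-classes admitting a representative $\bar b$ with $\chi(C/\bar b)=1$. This is legitimate for an interpretation in $M^{*}$, since the domain formula of an interpretation may be any $\mathcal{L}$-formula and need not factor through the interpreted arithmetic; and your two points of care are the right ones --- quantifying over $E$-representatives because ``$\chi(C/\bar b)=1$'' is not $E$-invariant, and checking that every natural number is realised by strongly embedding $A$ together with $p$ free copies of $B$ and one free copy of $C$, exactly as in the paper's proof of \Cref{lmaQOrder} (strongness of the embedding guarantees no extra copies of $C$ appear, since copies of $C$ over $\bar a$ are intrinsic and hence lie in any strong subset containing $\bar a$). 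The closure of your $\mathbb{N}_{0}$ under the interpreted operations is automatic once one knows the interpreted $+,\cdot$ compute genuine rational addition and multiplication, which is the content of \Cref{propQSumMul}, so even the bookkeeping you flag is light. The passage to true arithmetic in $\theory(M^{*})$ via completeness is also as intended.
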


\begin{paragraph}{Acknowledgement.}
\hspace*{-7pt}The first author greatly benefited from discussions on this paper with John Baldwin and his comments. We would like to thank Zaniar Ghadernezhad for some helpful discussions. Also we would like to thank Mohsen Khani for carefully reading the paper and giving useful comments and suggestions.
\end{paragraph}

\bibliographystyle{alpha}

\end{document}